\def\be{\begin{equation}}
\def\ee{\end{equation}}
\def\bea{\begin{equation*}}
\def\eea{\end{equation*}}
\def\begs{\begin{split}}
\def\ends{\end{split}}
\newtheorem{thm}{Theorem}
\newtheorem*{thm*}{Theorem}
\newtheorem{lma}[thm]{Lemma}
\newtheorem{cor}[thm]{Corollary}
\newtheorem{prop}[thm]{Proposition}
\newtheorem{df}{Definition}
\theoremstyle{remark}
\newtheorem{preremark}[thm]{Remark}
\newtheorem{preex}[thm]{Example}
\newenvironment{remark}{\begin{preremark}}{\end{preremark}}
\title{On the Chemical Distance in Critical Percolation II}
\author{Michael Damron \thanks{The research of M. D. is supported by NSF grant DMS-0901534.} \\ \small{Georgia Tech} \\ \small{Indiana University, Bloomington}  \and Jack Hanson\thanks{The research of J.S. is supported by an AMS-Simons Travel grant.} \\ \small{Georgia Tech} \\ \small{City College, NY} \and Philippe Sosoe\thanks{The research of P. S. is supported by the Center for Mathematical Sciences and Applications at Harvard University.} \\ \small{CMSA, Harvard}}
\begin{document}
\maketitle

\abstract{We continue our study of the chemical (graph) distance inside large critical percolation clusters in dimension two. We prove new estimates, which involve the three-arm probability, for the point-to-surface and point-to-point distances. We show that the point-to-point distance in $\mathbb{Z}^2$ between two points in the same critical percolation cluster has infinite second moment. We also give quantitative versions of our previous results comparing the length of the shortest crossing to that of the lowest crossing of a box.}

\section{Introduction}
In this paper, we continue our study of the graph distance inside large critical percolation clusters in $\mathbb{Z}^2$, initiated in \cite{DHS15}. The graph distance between two subsets $A$, $B$ in $\mathbb{Z}^2$ is defined as the minimal number of edges in any path remaining in one percolation cluster, connecting $A$ to $B$. This quantity is commonly referred to as the ``chemical distance'', to distinguish it from the Euclidean distance in $\mathbb{Z}^2$.

In \cite{DHS15}, we considered the distance between the sides of the square $B_n(0)=[-n,n]^2$, conditioned on the existence of a crossing, obtaining an upper bound on the length of the shortest crossing(s) from the left to the right side. This problem had been previously considered by H. Kesten and Y. Zhang \cite{KZ} (see also \cite[Problem 3.3]{schramm-ICM}). The result in \cite{DHS15} is that the length $S_n$ of the shortest crossing(s) is asymptotically ``infinitely smaller'' than the length $L_n$ of the lowest crossing of $B_n(0)$:
\begin{thm*}[\cite{DHS15}]
Conditioned on the event 
\begin{equation}\label{eqn: Hn}
H_n=\{\text{there exists a horizontal open crossing on } [-n,n]^2\},
\end{equation}
one has 
\begin{equation}\label{eqn: prob}
\frac{S_n}{L_n} \rightarrow 0 \text{ in probability.}
\end{equation}
\end{thm*}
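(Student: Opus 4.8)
The plan is to reduce the statement to a one-sided bound and then to exhibit, on the event $H_n$ and with high conditional probability, an open left--right crossing of $B_n(\org)$ that is much shorter than the lowest crossing. Write $\pi_3(m)$ for the polychromatic three-arm probability to distance $m$ (two open arms, one dual-closed arm), so that $n^2\pi_3(n) = n^{4/3+o(1)}$. I would first invoke the known estimates on the lowest crossing: the high-probability lower bound $\Pr(L_n \ge c\,n^2\pi_3(n)\mid H_n)\to 1$ for some $c>0$, together with the first-moment bound $\E[\,L_n\,\mathbf{1}_{H_n}\,] \le C\,n^2\pi_3(n)$ (of Kesten--Zhang / Zhang type; cf. \cite{KZ}). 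Since $\Pr(H_n)\asymp 1$ by RSW, the theorem reduces to the following: for every $\eps>0$ there is a horizontal open crossing $\gamma_n$ of $B_n(\org)$, defined on $H_n$, with $\E[\,|\gamma_n|\,\mathbf{1}_{H_n}\,] = o(n^2\pi_3(n))$. Indeed Markov's inequality then gives $\Pr(S_n \ge \eps\,n^2\pi_3(n)\mid H_n) \le \E[\,|\gamma_n|\,\mathbf{1}_{H_n}\,]/(\eps\,n^2\pi_3(n)\,\Pr(H_n)) \to 0$, and combining with the lower bound for $L_n$ yields $S_n/L_n \to 0$ in probability.

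I would construct $\gamma_n$ by a multiscale ``shortcutting'' of the lowest crossing $\gamma^{(0)}:=L_n$. Fix a large constant $K$ and scales $m_j=nK^{-j}$ for $0\le j\le J$ with $J\asymp\log n$. At pass $j\to j+1$, inspect $\gamma^{(j)}$ through the tiling of $B_n(\org)$ by boxes of side $m_{j+1}$, and call such a box, met by $\gamma^{(j)}$, \emph{rough} if the portion of $\gamma^{(j)}$ it contains has length at least $A\,m_{j+1}$ for a large fixed $A$. The single-scale input I would need is a lemma of the following shape: \emph{for a box $B'$ of side $\ell$ and conditionally on the portion of the configuration already explored during the construction, with probability at least some universal $\eps_0>0$ there is an open path of length $O(\ell)$ inside $B'$ joining the two points at which $\gamma^{(j)}$ entered and exited $B'$} --- i.e. a roughly straight crossing of $B'$ that avoids the dual-arm region pinning the lowest crossing. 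Such a path is produced by the standard two-dimensional critical toolbox (RSW, FKG, Kesten's generalized gluing) applied to the law of the configuration off the explored edges; replacing the portion of $\gamma^{(j)}$ inside a rough box by it removes all but an $O(1/A)$ fraction of that portion's length.

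Applying this lemma to a positive fraction of the rough boxes at scale $m_{j+1}$ --- chosen disjoint, with the associated three-arm events conditionally independent of the data revealed at scales $\le j$ --- yields $\gamma^{(j+1)}$ whose length is smaller than that of $\gamma^{(j)}$ by at least a fixed fraction of the current ``rough length'' $R_j$, defined as the total length of $\gamma^{(j)}$ lying in rough boxes. The heuristic is then: $R_j$ stays comparable to the full length $|\gamma^{(j)}|$ until it is driven down geometrically, $R_j \lesssim (1-\eps_1)^{j}\,L_n$ for some fixed $\eps_1>0$, so that after $J\asymp\log n$ passes $|\gamma_n|:=|\gamma^{(J)}| \lesssim (1-\eps_1)^{J}\,L_n = n^{-c'}L_n = o(L_n)$; taking $\E[\,\cdot\,\mathbf{1}_{H_n}\,]$ and using $\E[\,L_n\,\mathbf{1}_{H_n}\,]\le C\,n^2\pi_3(n)$ finishes.

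The hard part lies in two places. First, the ``regularity'' claims above --- that the lowest crossing, and then each $\gamma^{(j)}$, keeps a positive fraction of its length in rough boxes at the relevant scale, so that the per-pass gain is a genuine fraction of $|\gamma^{(j)}|$ and the losses compound --- require quantitative three-arm estimates for how the length of these crossings (or of their relevant pieces) is distributed across scales and across tiling boxes; this is exactly the kind of estimate the body of the paper is devoted to. Second, one must control the conditioning: once the lowest crossing and all previously inserted shortcuts are revealed, the configuration in the unexplored region is a percolation in a random domain, not a free one, so the uniformity of $\eps_0$ in the single-scale lemma has to be extracted from the fact that such conditioned measures still satisfy RSW-type lower bounds, and the ``positive fraction of rough boxes succeed'' statement needs a second-moment (Paley--Zygmund) argument relying on independence among three-arm configurations in disjoint boxes. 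By comparison, conditioning on $H_n$ itself and the final passage to convergence in probability are routine.
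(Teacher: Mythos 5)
Your proposal takes a genuinely different route from the paper, and the two places you flag as ``the hard part'' are not loose ends to be tightened but points where the iterative scheme would actually fail; the paper's single-pass design is built precisely to sidestep them.

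The paper (and \cite{DHS15}) construct a shorter crossing $\sigma_n$ in \emph{one pass}, not by iterating. For each edge $e$ of $\hat{l}_n$ one searches for an $\epsilon$-shielded detour $\pi(e)$: an open arc lying strictly above $l_n$, shielded from below by a closed dual arc, with $\#\pi(e)\le\epsilon\,\#\hat\pi(e)$, where $\hat\pi(e)$ is the detoured portion of $l_n$. The factor $\epsilon$ does \emph{not} come from compounding gains; it is built into a single event $E_k$ by scale separation. The open detour arc lives at scale $2^k$, while the geometry of $E_k$ (the constructions in Section~\ref{sec: Ek}, Figures~\ref{rswf}--\ref{rsw2}) forces the detoured portion of $l_n$ out to scale $2^K\approx 2^k/\epsilon$; the three-arm lower bound $\pi_3(2^k,2^K)\gtrsim\epsilon^{1-\alpha_3}$ then yields $\#\pi(e)/\#\hat\pi(e)<\epsilon$. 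One takes $\sigma_n$ to be a maximal disjoint family of detours glued with the undetoured pieces of $l_n$, so that $\#\sigma_n\le\epsilon L_n + O(n^{1+\alpha_3/2}) + \#\{e:\pi(e)=\emptyset\}$, and the entire proof reduces to showing $\mathbf{P}(\pi(e)=\emptyset\mid e\in l_n)$ is small. That is done via quasi-independence of the $E_k$ across well-separated dyadic scales, together with the crucial reduction (\cite[Prop.~19]{DHS15}) replacing the conditioning on $\{e\in l_n\}$ by conditioning on the three-arm event $A_3(e,N)$.

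There are two concrete gaps in your iteration. First, the claimed decay $|\gamma^{(J)}|\lesssim(1-\epsilon_1)^J L_n = n^{-c'}L_n$ cannot be correct: the sharpest quantitative bound proved (Theorem~\ref{thm: quantitative}) gains only $(\log n)^{-c_2}$ with $c_2<1/4$, and for $c'>1/3$ your rate would even contradict the trivial bound $S_n\ge 2n$. The step that fails is the assertion that a positive fraction of $|\gamma^{(j)}|$ remains in rough boxes at scale $m_{j+1}$. After shortcutting at scale $m_j$, the replacement arc is essentially straight at that scale, so any remaining ``rough length'' at finer scales lives only in the boxes you failed to shortcut, and nothing controls its size relative to $|\gamma^{(j)}|$. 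Worse, for $j\ge1$ the path $\gamma^{(j)}$ is no longer the lowest crossing, so its edges carry no three-arm structure, and the per-box length statistics that exist for $l_n$ (and that your renormalization relies on) are simply unavailable for $\gamma^{(j)}$.

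Second, the conditioning compounds fatally. The paper conditions exactly once, on a three-arm event, after which the detour events inside dyadic annuli around $e$ are estimated by free RSW/FKG/quasi-independence arguments (Sections~\ref{sec: neweventprob}--\ref{sec: neweventprob2}). In your scheme, pass $j+1$ must be analysed conditionally on all edges revealed in passes $1,\dots,j$: the lowest crossing, all prior detour arcs and their dual shields, and all entry/exit data. This is percolation in a random domain with a long-range boundary condition; RSW does not hold uniformly over such conditionings, and the ``universal $\epsilon_0$'' in your single-scale lemma has no justification. This is exactly the obstruction that the single-pass $\epsilon$-shielded-detour construction is designed to avoid, which is why the gain factor is engineered into the detour definition itself rather than accumulated across passes.
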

The volume $L_n$ is known to be of order $Cn^2\pi_3(n)$ with high probability as $n\rightarrow \infty$ \cite{KZ,MZ} (see also \cite[Section 7]{DHS15}), where $\pi_3(n)$ is the probability that the origin $(0,0)$ is connected to $\partial B_n(0) =\{x\in\mathbb{Z}^2:|x|_\infty = n\}$ by two disjoint open paths and a closed dual path. (On the triangular lattice, the analogous probability is $\pi_3(n)=n^{-2/3+o(1)}$.) Thus, the chemical distance between vertical sides of $B_n(0)$ is $o(n^2\pi_3(n))$, with high probability. A similar statement holds in expectation, as was also shown in \cite{DHS15}.

In many applications, in particular the study of random walks, it is more natural to consider the \emph{radial} chemical distance; that is, the distance from the origin to the exterior of the $\ell_\infty$ ball $B_n(0)$. To the best of our knowledge, there are no bounds for the radial distance in the literature. It is not difficult to estimate this distance in terms of the two-arm exponent. Our first result improves such a bound to one in terms of the (smaller) three-arm exponent. Unlike the case of the shortest crossing, there is now no ``lowest'' crossing of $B_n(0)$ to compare to. 
\begin{thm}\label{thm: radial}
Let $A_n$ be the event that there is an open connection from the origin $(0,0)$ to $\partial
B_n(0)$. On $A_n$, the random variable $S_{B(0,n)}$ is defined as the chemical distance between the origin and $\partial B_n(0)$.

There is a constant $C>0$ independent of $n$ such that
\begin{equation}\label{eqn: radial}
\mathbf{E}[S_{B_n(0)}\mid A_n]\le Cn^2\pi_3(n).
\end{equation}
\end{thm}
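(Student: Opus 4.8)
The plan is to construct an open path from the origin to $\partial B_n(0)$ whose expected length is $O(n^2 \pi_3(n))$, working on a dyadic scale decomposition and using the same circuit/innermost-crossing technology developed in \cite{DHS15}. Write $B_k = B_{2^k}(0)$ for $2^k \le n$, so there are $\log_2 n$ scales. On the event $A_n$ we want to join the origin across each annulus $B_{k+1} \setminus B_k$ using a piece of path whose expected length is comparable to $4^k \pi_3(2^k)$; summing the geometric-type series $\sum_k 4^k \pi_3(2^k)$ and using quasimultiplicativity of $\pi_3$ together with the bound $\pi_3(m) \ge c (m/n)^2 \pi_3(n)$ (which follows from $\pi_3(2^k) \gtrsim 4^{-1}\pi_3(2^{k+1})$, a consequence of the a priori bound $\pi_3(m) \asymp m^{-2}$ up to corrections, valid on $\mathbb{Z}^2$ since the three-arm half-plane-type event forces $\pi_3(m) \le C m^{-2}$) yields a total of $O(n^2 \pi_3(n))$, dominated by the top scale.

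The key construction in each annulus is to take the \emph{innermost} open circuit-with-defect, or rather the innermost open crossing of the annulus $B_{k+1}\setminus B_k$ that is connected to the origin; call its cluster-restricted-to-the-annulus portion $\gamma_k$, and bound $\mathbf{E}[|\gamma_k| \mid A_n]$. The point is that a vertex $x$ in the annulus lies on (or adjacent to) this innermost crossing only if there are three arms from $x$ to near $\partial B_k$: two open arms (one going ``inward'' along the crossing, one going the other way) and one closed dual arm separating the crossing from the rest of the annulus (this is exactly the geometry that makes $x$ pivotal-like for the innermost-crossing event, as in the lowest-crossing argument of \cite{KZ} and its refinement in \cite{DHS15}). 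Combined with the requirement that the origin connect to $\partial B_k$ in the first place, a BK/quasimultiplicativity estimate gives $\mathbf{P}(x \in \gamma_k, A_n) \le C \pi_3(2^k)\, \pi_1(2^k) \cdots$; dividing by $\mathbf{P}(A_n) \asymp \pi_1(n)$ and summing over the $\asymp 4^k$ vertices $x$ in the annulus, one gets $\mathbf{E}[|\gamma_k| \mid A_n] \le C 4^k \pi_3(2^k)$ after the arm-event bookkeeping collapses using quasimultiplicativity of $\pi_1$ and $\pi_3$.

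Then I would glue: on $A_n$, chain together $\gamma_0, \gamma_1, \dots$ — each $\gamma_k$ is an open connected subset of the annulus $B_{k+1}\setminus B_k$ that, by construction, is attached to the origin's cluster and reaches $\partial B_{k+1}$, so consecutive pieces overlap near the shared annulus boundary and their union contains an open path from $\org$ to $\partial B_n(0)$. Hence $S_{B_n(0)} \le \sum_{k} |\gamma_k|$ on $A_n$, and taking conditional expectations finishes the bound.

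The main obstacle I expect is making the ``innermost crossing connected to the origin'' object well-defined and proving the three-arm pivotality bound for it uniformly across scales — in particular handling the interface between the part of the cluster inside $B_k$ (which carries a separate arm constraint back to the origin) and the annulus crossing, so that the relevant event at a vertex $x$ genuinely factors as (three arms from $x$ out to the annulus boundary) $\times$ (arms connecting the inner boundary of the annulus to the origin), without double-counting the region near $\partial B_k$. This is precisely the place where one must invoke a gluing/separation lemma (of the type used in \cite{DHS15} to control the shortest crossing) to convert the geometric picture into a clean product of arm probabilities; once that is in place, quasimultiplicativity of $\pi_1$ and $\pi_3$ and the summation are routine.
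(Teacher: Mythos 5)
Your high-level plan — construct an open path from $0$ to $\partial B_n$ out of ``extremal'' pieces, bound the probability that a given edge lies on it by a three-arm probability, sum, and use the a priori bound $\pi_3(m,n)\gtrsim(m/n)^{\beta}$ with $\beta<2$ to collapse the sum to the top scale — is the right strategy and agrees in spirit with the paper's. But the construction of the path itself has a genuine gap, and it is exactly the place where the paper spends most of its effort.

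The paper does not decompose $B_n$ into fixed dyadic annuli. Instead, it works on the event $C_0$ that an open circuit around $0$ exists, and uses the (random) successive innermost open circuits $\mathcal{C}_1,\dots,\mathcal{C}_K$ around the origin, together with open connectors $\tilde\sigma^1_n,\dots,\tilde\sigma^{K+1}_n$ between consecutive circuits (chosen ``closest to'' a companion closed dual path). This gives, automatically, a single open path from $0$ to $\partial B_n$, because the connector $\tilde\sigma^{k+1}_n$ by definition joins $\mathcal{C}_k$ to $\mathcal{C}_{k+1}$. In your proposal, the per-annulus objects $\gamma_k$ are asserted to ``overlap near the shared annulus boundary,'' but there is no mechanism that makes this true: the ``innermost open crossing of $B_{2^{k+1}}\setminus B_{2^k}$ connected to the origin'' can land on $\partial B_{2^{k+1}}$ anywhere, and the corresponding object in the next annulus can start anywhere else; they need not intersect. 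So you do not actually have a path from $0$ to $\partial B_n$ to bound.

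A second, more subtle gap: the per-edge three-arm estimate you rely on is not available uniformly for your objects. The paper's Lemma \ref{lma: middlect} and Lemma \ref{lma: circuits} show that for an edge $e$ on a connector between two circuits (or on a circuit), one generally has only a three-arm event at the (adaptive, edge-dependent) scale $2^{l}$ where the connector's two endpoints lie, and beyond that scale one must switch to a \emph{four}-arm event coming from the two nearby circuits. This hybrid estimate is the technical heart of the proof, and it is dictated precisely by the fact that a connector between circuits may be very short. Your argument asserts a plain three-arm bound ``from $x$ to near $\partial B_k$,'' which is not correct when the relevant crossing is shorter than the annulus width, and no fix is indicated. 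Finally, a factual slip: you invoke ``$\pi_3(m)\asymp m^{-2}$'' and a half-plane heuristic, which is wrong — the full-plane three-arm exponent is strictly less than $1$, not equal to $2$. The conclusion you want (that the dyadic sum is dominated by the top scale) is nonetheless true, but for the reason given in the paper's Proposition~\ref{prop: boss} and Lemma~\ref{lem: artem}, namely $\pi_3(m,n)\ge c(m/n)^{1-\alpha_3}$ with $\alpha_3>0$.
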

We expect that it is possible to improve the estimate \eqref{eqn: radial}, replacing the right side by a quantity that is $o(n^2\pi_3(n))$, using the techniques developed in \cite{DHS15} and further refined in this paper (see Theorem \ref{thm: quantitative} below). We leave such an improvement for later investigation.

Using the approach developed in the proof of Theorem \ref{thm: radial}, we also obtain an estimate on the unrestricted point-to-point distance between two points $x,y\in \mathbb{Z}^2$, conditioned on the event $\{x\leftrightarrow y\}$ that $x$ and $y$ are connected by an open path. The adjective ``unrestricted'' refers to the connections being allowed to be anywhere on the plane, not just inside some large box.
\begin{cor}\label{thm: pt2pt}
Let $x, y\in \mathbb{Z}^2$, and $d = |x-y|_1$ be the $\ell_1$ distance between them. On $\{x\leftrightarrow y\}$, denote by $\mathrm{dist}_{chem}(x,y)$ the (random) chemical distance between $x$ and $y$. Then there are constants $C, c_1>0$ such that for any $\lambda>0$ and $x,y$,
\begin{equation}\label{eqn: chem}
\mathbf{P}(\mathrm{dist}_{chem}(x,y)\ge \lambda d^2\pi_3(d) \mid x \leftrightarrow y)\le C\lambda^{-c_1}
\end{equation}
\end{cor}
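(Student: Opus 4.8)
The plan is to reduce the unrestricted point-to-point estimate to the radial estimate of Theorem \ref{thm: radial} via a conditioning/averaging argument, combined with standard quasimultiplicativity of arm events. Fix $x,y$ with $d=|x-y|_1$; by translation invariance we may assume $x=\org$. The first step is to run the construction from the proof of Theorem \ref{thm: radial} (which produces a path of expected length $Cn^2\pi_3(n)$ from the origin to $\partial B_n(\org)$, conditioned on the one-arm event $A_n$) at scale $n \asymp d$, and concatenate two such constructions — one around $x$ and one around $y$ — with a single box-crossing at scale $d$ joining them. This should yield a path realizing $\mathrm{dist}_{chem}(x,y)$ of conditional expected length $O(d^2\pi_3(d))$ on $\{x\leftrightarrow y\}$. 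The point is that conditionally on $\{x \leftrightarrow y\}$, the cluster of $x$ reaches distance $\asymp d$ from $x$ with probability bounded below, so the radial construction applies on a set of conditional probability bounded away from $0$; on the complementary event one pays a constant factor in the arm-probability normalization. Then \eqref{eqn: chem} follows from Markov's inequality with $c_1 = 1$.

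Carrying this out, the main steps in order are: (i) show $\mathbf{P}(x\leftrightarrow y) \asymp \pi_2(d)$ and relate the relevant conditional law to the unconditioned one using the generalized FKG / quasimultiplicativity machinery, so that the cluster of $x$ (resp. $y$) contains a ``good'' annulus structure at scales up to $d$ with the right conditional probability; (ii) build the two radial paths from $x$ to $\partial B_{d/4}(x)$ and from $y$ to $\partial B_{d/4}(y)$ using Theorem \ref{thm: radial}, with the conditioning argument ensuring the expected-length bound $O(d^2 \pi_3(d))$ survives the conditioning on $\{x\leftrightarrow y\}$; (iii) splice the two paths together through an open crossing of the annular region between them — here one uses a gluing construction (e.g. connecting the two clusters inside a box of side $\asymp d$ placed between $x$ and $y$, whose crossing also has expected length $O(d^2\pi_3(d))$ by the same lowest-crossing/shortest-crossing estimates as in \cite{DHS15} and Theorem \ref{thm: radial}); (iv) conclude via Markov. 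A mild technical point is that $\pi_3(d)$ is only defined up to the usual polynomial corrections, but all comparisons are up to constants, so this is harmless.

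The main obstacle I anticipate is step (i)–(iii): namely, ensuring that conditioning on the \emph{specific} connection event $\{x \leftrightarrow y\}$ does not distort the law of the local cluster structure near $x$ and $y$ in a way that inflates the expected path length. One cannot simply condition on $A_n$ around $x$ and independently around $y$; the event $\{x\leftrightarrow y\}$ couples the two. The clean way around this is to decompose $\{x \leftrightarrow y\}$ according to the ``lowest'' or first connecting path in a box $B_{2d}$ containing both points (a standard exploration argument), revealing as little as possible of the configuration, and then apply the Theorem \ref{thm: radial} construction in the unexplored region near $x$ (respectively $y$) conditionally. Because the radial construction in Theorem \ref{thm: radial} is itself built from gluing box-crossings whose lengths are controlled by three-arm estimates, and the three-arm event is the natural one at an interior point of a connecting path, the expected-length bound $O(d^2\pi_3(d))$ should be stable under this conditioning. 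The tail bound $C\lambda^{-c_1}$ with $c_1=1$ then comes for free from Markov; obtaining a larger $c_1$ is not claimed and would require the stronger moment bounds hinted at after Theorem \ref{thm: radial}.
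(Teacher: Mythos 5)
There is a genuine gap, and it is fatal to the plan as stated. You intend to show that the \emph{unrestricted} conditional expectation $\mathbf{E}[\mathrm{dist}_{chem}(x,y)\mid x\leftrightarrow y]$ is $O(d^2\pi_3(d))$ and then conclude with Markov, yielding $c_1=1$. But this expectation is not known to be finite: the remark just after Proposition~\ref{prop: nomoment} states explicitly that we do not know whether the conditional first moment is finite, and the proposition itself shows the second (and even $2-\delta$) moments are infinite. So a bound with $c_1=1$ via Markov on the unrestricted distance is out of reach; the paper obtains only a small $c_1$, and says so in the text just after the statement. The underlying obstruction is not the conditioning issue you flag, but a more basic one: $\{x\leftrightarrow y\}$ does not force the connecting cluster (let alone the shortest connecting path) to live in any box of side $\asymp d$. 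As Lemma~\ref{lem: taco_head} shows for $d=1$, the event that $x$ and $y$ are connected but not connected inside $B_{2^{k}}$ has probability comparable to $\pi_4(2^{k-1})$, which decays only polynomially. Any construction that splices two radial paths at scale $\asymp d$ with a single crossing cannot produce a path realizing $\mathrm{dist}_{chem}(x,y)$ on the part of $\{x\leftrightarrow y\}$ where the connection genuinely requires a much larger box, and on that part the chemical distance is necessarily $\gtrsim 2^k \gg d^2\pi_3(d)$.

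What the paper does instead is to prove a \emph{restricted} first-moment bound (Proposition~\ref{prop: superman}): $\mathbf{E}[\mathrm{dist}^{B_n}_{chem}(x,y)\mid x\leftrightarrow_{B_n} y]\le Cn^2\pi_3(n)$ for $x,y\in B_{n/2}$. Then it controls the box scale by letting $K$ be the smallest $K'$ such that a closed dual circuit in $B'_{2^{K+1}d}\setminus B'_{2^K d}$ surrounds $B'_{2^K d}$; on $\{K\le L\}\cap\{x\leftrightarrow y\}$ planarity forces the connection into $B'_{2^{L+1}d}$. Crucially, $\{K\ge L\}$ depends only on edges outside $B'_{2d}$, and an RSW/FKG construction (Figure~\ref{const}) shows $\mathbf{P}(x\leftrightarrow y)\ge C\mathbf{P}(x\leftrightarrow B_{d/4}(x),\, y\leftrightarrow B_{d/4}(y))$, which together give the \emph{exponential} tail $\mathbf{P}(K\ge L\mid x\leftrightarrow y)\le Ce^{-cL}$. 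On $\{K\le L\}$ one applies Proposition~\ref{prop: superman} at scale $2^{L+1}d$ plus Markov, at a polynomial cost $\lambda^{-1}\cdot 2^{2L}\pi_3(d,2^Ld)$, and finally optimizes $L\asymp \log\lambda$. This two-part decomposition (control of the required scale $K$, plus a restricted moment bound) is exactly what's needed to compensate for the unbounded tail of the unrestricted distance; your proposal has no analogue of it. Steps (i)--(iii) of your outline are also not quite what is needed even within a fixed box: the paper's Proposition~\ref{prop: superman} is proved by adapting the radial argument directly to the separating-curve geometry between $x$ and $y$, not by concatenating two independent radial constructions, since the circuit/separating-curve structure around $x$ is the same object as around $y$ once you replace ``innermost circuits around $0$'' by ``innermost open curves separating $x$ from $y$''.
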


The constant $c_1>0$ in \eqref{eqn: chem} we obtain in our proof is quite small. The next proposition shows, however, that this estimate on the tail probability cannot be much improved. Indeed, the distribution of the unrestricted chemical distance is not concentrated. The second moment is infinite, regardless of the Euclidean distance between $x$ and $y$:
\begin{prop}\label{prop: nomoment}
With the notation of the previous corollary, we have
\begin{equation}\label{eqn: nomomentE}
\mathbf{E}[(\mathrm{dist}_{chem}((0,0),(1,0)))^2\mid (0,0)\leftrightarrow (1,0)]=\infty.
\end{equation}
\end{prop}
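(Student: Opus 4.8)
Since $(0,0)$ and $(1,0)$ are neighbours, the event $\{(0,0)\leftrightarrow(1,0)\}$ has probability bounded strictly between $0$ and $1$, so it suffices to prove that $\mathbf{E}\big[(\mathrm{dist}_{chem}((0,0),(1,0)))^{2};\,(0,0)\leftrightarrow(1,0)\big]=\infty$. The plan is to bound the chemical distance from below by a much simpler \emph{escape radius}. For a configuration $\omega$, let $R^{\star}(\omega)$ be the smallest $R\ge 1$ such that some open path from $(0,0)$ to $(1,0)$ has all of its vertices in $B_{R}(0)$, with $R^{\star}=\infty$ if $(0,0)\not\leftrightarrow(1,0)$; note $\{R^{\star}<\infty\}=\{(0,0)\leftrightarrow(1,0)\}$. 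Any geodesic between $(0,0)$ and $(1,0)$ has $D:=\mathrm{dist}_{chem}((0,0),(1,0))$ edges and hence visits only vertices within $\ell_{\infty}$-distance $D$ of $(0,0)$, so it lies in $B_{D}(0)$; therefore $R^{\star}\le D$ on $\{(0,0)\leftrightarrow(1,0)\}$. Using the elementary identity $\mathbf{E}[(R^{\star})^{2};R^{\star}<\infty]=\sum_{k\ge 1}(2k-1)\,\mathbf{P}(k\le R^{\star}<\infty)$, it is thus enough to show that $\sum_{k\ge 1}k\,\mathbf{P}(k\le R^{\star}<\infty)=\infty$.

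The main step is the lower bound $\mathbf{P}(k\le R^{\star}<\infty)\ge c\,\pi_{4}(k)$, valid for all $k$ with an absolute constant $c>0$, where $\pi_{4}(k)$ denotes the alternating (open--closed--open--closed) four-arm probability from a bounded neighbourhood of the origin to distance $k$. I would obtain it from an explicit favourable configuration at scale $n=k$: \emph{(i)} a closed dual path crossing $B_{n}(0)$ from its top side to its bottom side and running through the dual edge $e_{0}^{*}$ of the edge $e_{0}$ joining $(0,0)$ to $(1,0)$ --- since $(0,0)$ and $(1,0)$ lie on opposite sides of $e_{0}^{*}$, such a dual crossing separates them inside $B_{n}(0)$, forcing $R^{\star}>n$; \emph{(ii)} an open arm from $(0,0)$ to $\partial B_{n}(0)$ on the side of that dual crossing containing $(0,0)$ and an open arm from $(1,0)$ to $\partial B_{n}(0)$ on the other side; and \emph{(iii)} an open circuit in the annulus $B_{2n}(0)\setminus B_{n}(0)$ to which both of these open arms connect, so that $(0,0)\leftrightarrow(1,0)$ and $R^{\star}\le 2n<\infty$. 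The event in \emph{(i)}--\emph{(ii)} is a four-arm event anchored at the edge $e_{0}$ (two open arms from its endpoints, two closed arms from the dual edge $e_{0}^{*}$ out to $\partial B_{n}(0)$), so it has probability of order $\pi_{4}(n)$; conditionally on it, \emph{(iii)} has probability bounded below uniformly in $n$ by RSW. Gluing the pieces with quasi-multiplicativity and the arm-separation lemmas yields $\mathbf{P}(n<R^{\star}<\infty)\ge c\,\pi_{4}(n)$, and hence $\mathbf{P}(k\le R^{\star}<\infty)\ge c\,\pi_{4}(k)$ after reindexing (recall $\pi_{4}$ is nonincreasing).

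Combining the two steps, $\mathbf{E}\big[(\mathrm{dist}_{chem}((0,0),(1,0)))^{2};\,(0,0)\leftrightarrow(1,0)\big]\ \ge\ \sum_{k\ge 1}k\,\mathbf{P}(k\le R^{\star}<\infty)\ \ge\ c\sum_{k\ge 1}k\,\pi_{4}(k)$, and the last series diverges because $k^{2}\pi_{4}(k)$ stays bounded away from $0$ as $k\to\infty$ (indeed $k^{2}\pi_{4}(k)\to\infty$) --- a standard property of the four-arm probability on $\mathbb{Z}^{2}$; on the triangular lattice it is immediate from $\pi_{4}(k)=k^{-5/4+o(1)}$. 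Dividing by $\mathbf{P}((0,0)\leftrightarrow(1,0))\in(0,1)$ proves Proposition~\ref{prop: nomoment}. (The same construction also gives $\mathbf{P}(\mathrm{dist}_{chem}((0,0),(1,0))\ge m\mid (0,0)\leftrightarrow(1,0))\ge c\,\pi_{4}(m)$, i.e.\ a polynomial lower bound on the tail with exponent $<2$, matching in spirit the upper bound of Corollary~\ref{thm: pt2pt}.) The one genuinely delicate point is the planar-duality bookkeeping in \emph{(i)}--\emph{(ii)}: checking that a closed dual top-to-bottom crossing of $B_{n}(0)$ through $e_{0}^{*}$ really does forbid an open $(0,0)$--$(1,0)$ path inside $B_{n}(0)$ while still leaving room for the two open arms and the outer circuit --- this is exactly why the relevant event carries four arms rather than three. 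Everything else (RSW, quasi-multiplicativity, arm separation, and the input $\liminf_{k}k^{2}\pi_{4}(k)>0$) is standard.
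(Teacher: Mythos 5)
Your argument is correct and follows essentially the same route as the paper: you lower-bound the chemical distance by an escape radius, show via an explicit four-arm construction (closed dual separating crossing plus open arms plus an RSW circuit) that the escape radius exceeds $k$ with probability $\gtrsim \pi_4(k)$, and then invoke $\liminf_k k^2\pi_4(k)>0$; the paper does the same with a dyadic variable $D_{\mathbf{e}_1}$ in place of your $R^\star$ and makes the last step explicit by citing $\pi_4\ge\pi_5\asymp n^{-2}$ (the universal five-arm exponent). The differences are cosmetic.
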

A similar result holds for the chemical distance between any two fixed vertices. As will be clear from the proof, the second moment has no special significance in \eqref{eqn: nomomentE}. The fractional moment $\mathbf{E}[\mathrm{dist}_{chem}((0,0),(1,0))^{2-\delta}\mid (0,0)\leftrightarrow (1,0)]$ can be shown to be infinite also for small $\delta>0$. On the other hand, we do not know whether the expected distance ($\delta =1$) is finite.

In the last part of this paper, we return to the problem of the chemical distance across the box $B(0,n)$, and improve our previous result \cite{DHS15} on the expected length of the shortest crossing to a quantitative result:
\begin{thm}\label{thm: quantitative}
Let $H_n$ be as in \eqref{eqn: Hn}. For any $0\le c_2<1/4$, there is a constant $C>0$ such that
\begin{equation}\label{eqn: ELn}
\mathbf{E}[ S_n \mid H_n] \le \frac{C}{(\log n)^{c_2}} \cdot n^2\pi_3(n) \text{ for all }n.
\end{equation}
\end{thm}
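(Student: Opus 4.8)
The plan is to deduce \eqref{eqn: ELn} from a quantitative strengthening of the main theorem of \cite{DHS15}, carried out by induction on scale. Since $S_n\le L_n$ holds deterministically on $H_n$, and $L_n$ concentrates at scale $n^2\pi_3(n)$ --- so that $\mathbf{E}[L_n\,\mathbf{1}_{\{L_n>Kn^2\pi_3(n)\}}\mid H_n]=o(n^2\pi_3(n))$ once $K$ is large, by the moment bounds in \cite{KZ,MZ} and \cite[Section~7]{DHS15} --- it is enough to exhibit, on $H_n$, a random open left--right crossing $\sigma_n$ of $B_n$ and an event $G_n$ with $\mathbf{P}(G_n^c\mid H_n)\le C(\log n)^{-c_2}$, such that $\sigma_n$ is a genuine crossing on $G_n$ and $\mathbf{E}[|\sigma_n|\,\mathbf{1}_{G_n}\mid H_n]\le C(\log n)^{-c_2}n^2\pi_3(n)$; on $G_n^c$ one simply sets $\sigma_n=\gamma_n$, the lowest crossing, whose contribution is then absorbed. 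In other words, the task is to upgrade the ``$\to 0$ in probability'' of \cite{DHS15} to a polylogarithmic rate while simultaneously letting the comparison threshold decay like $(\log n)^{-c_2}$.

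I would obtain this from a recursive, multiscale refinement of the shortcut/detour construction of \cite{DHS15}. Fix an intermediate scale $m=n^{\theta}$ for a suitable fixed $\theta\in(0,1)$, and partition the region between the bottom of $B_n$ and $\gamma_n$ into columns of width $m$. Inside each column one performs the detour construction of \cite{DHS15}, now fed with the quantitative scale-$m$ estimate already established for smaller boxes: using RSW, (generalized) FKG, and the quasi-multiplicativity and arm-separation estimates for the polychromatic three-arm event, one obtains --- uniformly in the scale and in the configuration outside the column --- a probability bounded below of replacing the portion of $\gamma_n$ in that column by a short open crossing of the slightly enlarged column, and the scale-$m$ bound then forces this replacement, with probability $1-O((\log m)^{-c})$, to have length at most $(\log m)^{-c}$ times the typical value. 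Neighbouring replacements are joined through thin vertical seams using pieces of $\gamma_n$, at a total length cost of order $n^2\pi_3(n)/m$, which is of lower order. A second-moment estimate over the $\asymp n/m$ columns (nearly independent after suitable conditioning) bounds the probability that too many columns fail. Carrying out this analysis and optimizing over $m$, the seam cost, and the variance bound against the input quality $c$ produces a new exponent $g(c)>c$ for every $c<1/4$, with $1/4$ appearing as the fixed point --- equivalently the ceiling --- of this self-improvement map: it is the value at which the saving from the shortcuts is exactly balanced against the seam cost together with the Cauchy--Schwarz loss in the column-wise second moment.

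The theorem follows by bootstrapping this lemma. Starting from the trivial bound (exponent $c=0$, namely $S_m\le L_m$), apply the improvement step successively at scales $n,\ n^{\theta},\ n^{\theta^2},\dots$, down to a bounded scale --- a chain of length $k\asymp\log\log n$ --- and unroll. Since $g^{(k)}(0)\uparrow 1/4$, the exponent obtained at scale $n$ is $g^{(k)}(0)=1/4-o(1)$, which exceeds any prescribed $c_2<1/4$ once $n$ is large; the constant $C$ is then enlarged to cover the finitely many small $n$, giving \eqref{eqn: ELn} for all $n$.

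The crux is the one-level lemma rather than the bootstrap, and two issues carry the weight. First, the uniformity of the shortcut probability: it must be bounded below \emph{independently of the scale and of the conditioning on the exterior of the current column}, which rests on the full strength of arm separation and quasi-multiplicativity for the polychromatic three-arm probability, together with RSW and FKG to make the columns independent enough for the second-moment step. Second, the gluing bookkeeping: one must verify that the accumulated seam contributions stay of lower order through all $\asymp\log\log n$ scales, and that enough independence survives the recursion to control simultaneously $\mathbf{E}[|\sigma_n|\,\mathbf{1}_{G_n}\mid H_n]$ and $\mathbf{P}(G_n^c\mid H_n)$. A further subtlety is that the recursion reinserts \emph{modified} crossings rather than the lowest crossing; the detour construction must therefore be phrased so that it uses only that the object being rerouted is some open crossing lying over a region carrying enough three-arm structure --- a property preserved under the rerouting.
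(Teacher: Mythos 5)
Your proposal takes a fundamentally different route from the paper, and it has genuine gaps that prevent it from yielding the stated exponent.

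The paper does \emph{not} bootstrap over scales. It works directly with the detour framework of \cite{DHS15}: for each edge $e$ on the lowest crossing $l_n$, it constructs a new event $E_k(\epsilon,e)$, occurring in an annulus around $e$, which guarantees an $\epsilon$-shielded detour around $e$. The decisive new input is the bound $\mathbf{P}(E_k'\mid A_3(n^{\alpha_3/2}))\ge C\epsilon^4$ (Proposition~\ref{prop: probest}), where the exponent $4$ comes from a concrete six-arm estimate reduced via quasi-multiplicativity to two half-plane three-arm probabilities, each of order $\epsilon^2$ by the universal half-plane exponent. There are $\asymp \log n/\log\log n$ scales on which $E_k'$ can occur independently; with $\epsilon=(\log n)^{-c_2}$, each scale succeeds with probability $\gtrsim(\log n)^{-4c_2}$, so the probability that all scales fail is at most $\exp(-C(\log n)^{1-4c_2}/\log\log n)=o(1/\log n)$, which works precisely when $c_2<1/4$. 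The threshold $1/4$ is thus read off from a specific arm-exponent calculation, not from a fixed-point principle.

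Your proposal instead sets up a recursion $c\mapsto g(c)$ via a columnar second-moment argument and asserts, without derivation, that $g$ has fixed point $1/4$ and that $g^{(k)}(0)$ reaches $1/4-o(1)$ after $k\asymp\log\log n$ iterations. Two things go wrong here. First, the map $g$ is never written down, so the claim that its fixed point is $1/4$ --- ``the value at which the saving from the shortcuts is exactly balanced against the seam cost together with the Cauchy--Schwarz loss'' --- is unsupported; there is no reason a priori for a seam-cost vs.\ second-moment balance to produce $1/4$ rather than some other exponent, and indeed the paper's $1/4$ has nothing to do with such a trade-off. Second, even granting a map with fixed point $1/4$, you would need quantitative convergence in $O(\log\log n)$ steps, which is not established and is not automatic. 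There is also a geometric mismatch: the scale-$m$ bound $\mathbf{E}[S_m\mid H_m]\lesssim(\log m)^{-c}m^2\pi_3(m)$ controls the shortest crossing of a \emph{box} of side $m$, whereas what you need in a column is a shortcut for the portion of $l_n$ in a strip of width $m$ and height up to $2n$, whose endpoints sit on $l_n$ rather than on the sides of a box; it is not clear how the box estimate transfers. Finally, the ``near-independence'' of columns after conditioning on $H_n$ and on the lowest crossing is precisely the kind of subtle point the paper handles via the annulus events and the circuit Lemma~\ref{lma: circlemma}; your sketch does not explain how to decouple the columns, and the second-moment computation would need such decoupling.
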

We can follow the approach in \cite[Section 7]{DHS15} to obtain the following corollary, improving on \eqref{eqn: prob}.
\begin{cor}\label{cor: quantitative} For $0<c_3<1/4$, we have
\[\mathbf{P}\left( S_n \le \frac{1}{(\log n)^{c_3}}\cdot L_n  \mid H_n \right)\rightarrow 1 \text{ as } n \to \infty.\]
\end{cor}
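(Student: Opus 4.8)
The plan is to derive Corollary~\ref{cor: quantitative} from Theorem~\ref{thm: quantitative} by a routine Markov-inequality argument, combined with a lower bound on the volume $L_n$ of the lowest crossing; the structure mirrors \cite[Section 7]{DHS15}. Fix $c_3\in(0,1/4)$ and pick an auxiliary exponent $c_2$ with $c_3<c_2<1/4$, so that Theorem~\ref{thm: quantitative} applies with this $c_2$.

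First I would use Markov's inequality for $S_n$ conditioned on $H_n$: for a threshold $a_n>0$,
\[
\mathbf{P}(S_n\ge a_n\mid H_n)\le a_n^{-1}\,\mathbf{E}[S_n\mid H_n]\le \frac{C}{a_n(\log n)^{c_2}}\,n^2\pi_3(n).
\]
Choosing $a_n=\delta\,(\log n)^{-c_3}\,n^2\pi_3(n)$ for a small constant $\delta>0$ makes the right-hand side at most $C\delta^{-1}(\log n)^{-(c_2-c_3)}$, which tends to $0$ because $c_2>c_3$.

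Next I would invoke the known fact, going back to Kesten--Zhang \cite{KZ} and Morrow--Zhang \cite{MZ} and recorded in \cite[Section 7]{DHS15}, that conditionally on $H_n$ the volume $L_n$ is of order $n^2\pi_3(n)$ with high probability; in particular there is $\delta>0$ with $\mathbf{P}(L_n\ge \delta\,n^2\pi_3(n)\mid H_n)\to 1$. Taking this $\delta$ in the previous step and applying a union bound gives
\[
\mathbf{P}\big(S_n>(\log n)^{-c_3}L_n\mid H_n\big)\le \mathbf{P}(S_n\ge a_n\mid H_n)+\mathbf{P}\big(L_n<\delta\,n^2\pi_3(n)\mid H_n\big)\to 0,
\]
since on the complement of both events one has $S_n<a_n=(\log n)^{-c_3}\cdot\delta n^2\pi_3(n)\le (\log n)^{-c_3}L_n$. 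This yields the corollary.

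The only step that is not entirely formal is the lower bound on $L_n$: one must rule out that the lowest crossing is atypically short conditionally on $H_n$. This relies on a second-moment (or concentration) estimate for $L_n$, which is essentially contained in \cite{MZ} and \cite[Section 7]{DHS15}; alternatively one can exhibit order $n^2\pi_3(n)$ many ``three-arm'' edges forced onto the lowest crossing and control its lower tail directly by a second-moment computation. Everything else is the Markov inequality together with a union bound, so I would expect the write-up to be quite short.
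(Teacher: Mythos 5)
Your proof is essentially the same as the paper's: a union bound splitting into the event that $S_n$ is large (controlled by Markov and Theorem~\ref{thm: quantitative} with an auxiliary $c_2\in(c_3,1/4)$) and the event that $L_n$ is atypically small (controlled by the lower-tail estimate on $L_n$). One minor technical caveat: the lower-tail bound the paper cites, \cite[Lemma 4]{DHS15}, is stated in the form $\lim_{\epsilon\downarrow 0}\limsup_n \mathbf{P}(0<L_n<\epsilon n^2\pi_3(n))=0$, so for a \emph{fixed} $\delta>0$ it only guarantees that $\limsup_n\mathbf{P}(L_n<\delta\,n^2\pi_3(n)\mid H_n)$ is small, not that it tends to zero; your phrasing ``$\mathbf{P}(L_n\ge\delta n^2\pi_3(n)\mid H_n)\to 1$'' is slightly stronger than what is directly available, but the weaker $\limsup$ form already suffices to make the union bound yield the corollary, since the first term does tend to zero and the second can be made $<\eta$ for any $\eta>0$ by taking $\delta$ small.
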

As in the derivation of  \cite[Corollary 3]{DHS15}, to pass from Theorem \ref{thm: quantitative} to the corollary, we must show that  $L_n$ is of order at least $n^2\pi_3(n)$ with high probability. This follows readily from the estimate for the lower tail of $L_n$ obtained in \cite[Lemma 24]{DHS15}.

\begin{remark}
The lowest crossing is distinguished in the sense that there is a simple characterization of its vertices in terms of arm events. Even though it is believable that there are other self-avoiding open paths inside crossing clusters whose typical lengths are different (say larger) than that of the lowest crossing, it is not easy to show this. However, the proof of the above results can be modified to show that the longest open crossing has length at least $(\log n)^{c_4}$ times that of the lowest with high probability.
\end{remark}

\subsection{Overview of the paper}
After setting some notation at the end of this introduction, we prove Theorem \ref{thm: radial} in Section \ref{sec: radial}. The main difficulty, compared to the case of the shortest crossing of a rectangle, is that there is no natural lowest path appearing in every configuration. We replace the lowest path by a deterministic construction based on successive innermost open circuits around the origin. Such circuits consist of three-arm points: points with two disjoint open connections, and one closed dual connection to a large distance. On the event $A_n(0)$, the circuits themselves can be connected to each other by paths made of three-arm points. In a typical configuration, $B_n(0)$ contains on the order of $\log n$ disjoint circuits around the origin. To avoid an additional logarithmic factor on the right side in \eqref{eqn: radial}, we keep track of the number of arms generated on large scales in a configurations with many circuits.

The proof of Corollary \ref{thm: pt2pt} in Section \ref{sec: chem} follows the strategy for Theorem \ref{thm: radial}. We establish an estimate for the point-to-point distance between two points $x$ and $y$ in some fixed box of size $n$ (see Proposition \ref{prop: superman}). The estimate \eqref{eqn: chem} quickly follows from this, by noting that the connection between $x$ and $y$ can be forced to lie inside a box of size $2^K|x-y|_1$ by placing a dual circuit around this box.

The proof of Proposition \ref{prop: nomoment} turns on the observation that the event ``$(0,0)$ and $(1,0)$ are connected in $B_{2^k}(0)$ but are not connected in $B_{2^{k-1}}(0)$'' has probability comparable to $\pi_4(2^{k-1})$, the 4-arm probability to distance $2^{k-1}$. This last probability can be compared to the five-arm probability, whose exact asymptotic behavior is known, allowing us to derive \eqref{moment}.

In Section \ref{sec: quantitative}, we prove Theorem \ref{thm: quantitative} using ideas introduced in \cite{DHS15}. The innovation is a different construction of an event $E_k$ implying the existence of a shortcut around a portion of the lowest crossing resulting in a gain of a factor $\epsilon$. See Section \ref{sec: Ek} for the definition of the event and explanatory figures. The advantage of the new construction is that the $\epsilon$-dependence of the probability of $E_k$ is better than for the corresponding event in \cite{DHS15}.

\subsection{Notation}
On the square lattice $(\mathbb{Z}^2,\mathcal{E}^2)$, let $\mathbf{P}$ be the critical bond percolation measure $\prod_{e \in \mathcal{E}^2} \frac{1}{2}(\delta_0 + \delta_1)$ on $\Omega = \{0,1\}^{\mathcal{E}^2}$. An edge $e$ is said to be open in the configuration $\omega$ if $\omega(e)=1$; it is closed otherwise. A (lattice) path is a sequence $\{v_0,e_1,v_1, \ldots, v_{N-1},e_N,v_N\}$ such that for all $k=1, \ldots, N$, $|v_{k-1}-v_k|_1=1$ and $e_k = \{v_{k-1},v_k\}$. A circuit is a path with $v_0=V_N$. For such paths we denote $\# \gamma = N+1$ (the number of vertices) and $\#E(\gamma)$ the number of edges in $\gamma$; that is, the number of elements in the edge set $E(\gamma)$ of $\gamma$. If $V \subset \mathbb{Z}^2$ then we say that $\gamma \in V$ if $v_k\in V$ for $k=0, \ldots, N$. Last, a path $\gamma$ is said to be (vertex) self-avoiding if $v_i=v_j$ implies $i=j$ and a circuit is (vertex) self-avoiding if $v_i=v_j \neq v_0$ implies $i=j$. Given $\omega \in \Omega$, we say that $\gamma$ is open in $\omega$ if $\omega(e_k)=1$ for $k=1, \ldots, N$.

The dual lattice is written $((\mathbb{Z}^2)^*,(\mathcal{E}^2)^*)$, where $(\mathbb{Z}^2)^* = \mathbb{Z}^2 + (1/2)(\mathbf{e}_1+\mathbf{e}_2)$ with its nearest-neighbor edges. Here, we have denoted by $\mathbf{e}_i$ the coordinate vectors:
\[ \mathbf{e}_1 = (1,0), \ \mathbf{e}_2 = (0,1).\]
Given $\omega \in \Omega$, we obtain $\omega^* \in \Omega^* = \{0,1\}^{(\mathcal{E}^2)^*}$ by the relation $\omega^*(e^*) = \omega(e)$, where $e^*$ is the dual edge that shares a midpoint with $e$. For any $V \subset \mathbb{Z}^2$ we write $V^* \subset (\mathbb{Z}^2)^*$ for $V + (1/2)(\mathbf{e}_1+\mathbf{e}_2)$. 

For any two subsets $A$, $B$ of $\mathbb{Z}^2$, we denote by $\{A\leftrightarrow B\}$ the event that there exists an open path of with one endpoint in $A$, and the other in $B$. If $S\subset \mathbb{Z}^2$ is a third subset of $\mathbb{Z}^2$, we denote by $\{A \leftrightarrow_{S}  B\}$ the event that $A$ and $B$ are connected by an open path whose vertices are all in the set $S$. If $A$ or $B$ is a single point, for example if $A=\{x\}$, we extend the notation above by writing $\{x\leftrightarrow B\}$ for $\{\{x\}\leftrightarrow B\}$. On $\{A\leftrightarrow B\}$, the random variable $\mathrm{dist}_{chem}(A,B)$, the chemical distance, is the least number of edges in any open path connecting $A$ to $B$. On the event $\{A \leftrightarrow_S B\}$, $\mathrm{dist}^S_{chem}(A,B)$ is the least number of edges in any open path connecting $A$ to $B$, and with all of its vertices in $S$.

For $x=(x_1,x_2)\in \mathbb{Z}^2$, we define
\[B_n(x) = \{y\in\mathbb{Z}^2: |y-x|_\infty \le n\}.\]
When $x$ is the origin $(0,0)$, we sometimes abbreviate $B_n((0,0))$ by $B_n(0)$ or $B_n$. We denote by $\partial B_n(x)$ the set
\[\partial B_n(x) =\{y\in\mathbb{Z}^2: |y-x|_\infty = n\}.\]

We end by defining some ``arm events'' inside $B_n(x)$. $A_n(x)$ is the event that $x$ is connected to $\partial B_n(x)$  by an open path. The probability of this event in critical percolation is denoted by $\pi_1(n)$:
\begin{equation}\label{eqn: pi1}
\pi_1(n) =\mathbf{P}(A_n(0)).
\end{equation}
The notation $\pi_3(n)$ is used to denote the probability of the event that the edge $\{(0,0),(1,0)\}$ is connected to $\partial B_n(0)$ by two disjoint open paths, and the dual edge $\{(\frac{1}{2},-\frac{1}{2}),(\frac{1}{2},\frac{1}{2})\}$ is connected by a closed dual path to $\partial B_n(0)^*$. Similarly, $\pi_4(n)$ is the probability that $\{(0,0),(1,0)\}$ is connected to $\partial B_n(0)$ by two disjoint open paths, and $\{(\frac{1}{2},-\frac{1}{2}),(\frac{1}{2},\frac{1}{2})\}$ is connected to $\partial B_n(0)^*$ by two disjoint closed dual paths. The arms appear in alternating order.

In this paper, constants (like $C$, $C'$, and so on) may differ from line to line, but never depend on any parameters involved in the argument in which they appear. If $(a_n)$ and $(b_n)$ are sequences of numbers, then the notation $a_n \asymp b_n$ will denote that there are positive constants $C,C'$ such that $Ca_n \leq b_n \leq C'a_n$ for all $n$.

\section{The radial chemical distance}
\label{sec: radial}
In this section, we prove Theorem \ref{thm: radial}, giving  an estimate for the ``radial'' chemical distance. Recall that $A_n = A_n(0)$ is the event that the origin is connected to $\partial B_n(0)$ by an open path. On $A_n$, define $S_{B_n(0)}$ as the least number edges in any open path from $0$ to $\partial B_n(0)$.

\subsection{Open circuits around $0$}\label{sec: c}
On $A_n$, let $C_0$ be the event that there is an open circuit around $0$ in $B_n=B_n(0)$. Note that on $A_n\cap C_0^c$, duality implies that there is, in addition to an open path, a closed dual path $\mathfrak{c}$ from a dual neighbor of the origin to $\partial B_n^*$. 

On $C_0$, let $K$ be the maximum number of disjoint open circuits around $0$ in $B_n$. For $1\le k \le K$, let $\mathcal{C}_k$ be the $k$-th innermost open circuit around 0. ($\mathcal{C}_1$ is the innermost open circuit around $0$ in $B_n$, $\mathcal{C}_2$ is the innermost open circuit around $0$ in $\mathrm{ext}(\mathcal{C}_1)\cap B_n$, and so on.) 

By minimality, each dual edge crossing $\mathcal{C}_1$ is connected to a vertex of the dual lattice adjacent to the origin by a closed dual path. For $k=1,\ldots, K-1$, each dual edge crossing $\mathcal{C}_{k+1}$ is connected to a dual edge crossing $\mathcal{C}_k$ by a closed dual path in $\mathrm{ext}\, \mathcal{C}_k$. Finally, $(\partial B_n)^*$ is connected to a dual edge crossing $\mathcal{C}_K$ by a closed dual path.

\subsection{Estimate for $S_{B_n(0)}$: case where $C_0^c$ holds}\label{sec: C0c}
On $A_n\cap C_0^c$, there is an open path $\tilde{\sigma}_n$ from $0$ to $\partial B_n$ which is closest to the ``left side'' of the closed path $\mathfrak{c}$ (from Section \ref{sec: c}) from the origin to $\partial B_n^*$. Let $R$ be the region strictly between $c$ and $\tilde{\sigma}_n$ (in the counter-clockwise ordering of paths starting from $\mathfrak{c}$). By definition, there is no open path from $0$ to $\partial B_n(0)$ in $R\cup \mathfrak{c}$ containing any vertex of $R$. Therefore by duality, for each edge $e$ of $\tilde{\sigma}_n$, there is a closed dual path in $R$ from $e^*$ to $\mathfrak{c}$. From this we have

\begin{lma}\label{lem: pizza_head}
Suppose $A_n\cap C_0^c$ holds. Then each edge $e\in \tilde{\sigma}_n$ has three disjoint arms to distance 
\[k=\min\{\mathrm{dist}(e,0),\mathrm{dist}(e,\partial B_n)\},\] two open arms and one closed dual arm.
\begin{proof}
The existence of the two open arms follows from the fact that $e\in \tilde{\sigma}_n$. The closed arm is obtained by following the above-mentioned dual closed path from $e^*$ to $\mathfrak{c}$, and then following $\mathfrak{c}$ toward $0$ or $\partial B_n$.
\end{proof}
\end{lma}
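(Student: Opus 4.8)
The plan is to assemble the three arms by hand out of objects that are already on the table: the open path $\tilde{\sigma}_n$ furnishes the two open arms, while the closed dual path $\mathfrak{c}$, together with the duality fact noted just before the lemma, furnishes the closed dual arm. Fix an edge $e=\{x,y\}$ of $\tilde{\sigma}_n$, write $k=\min\{\mathrm{dist}(e,0),\mathrm{dist}(e,\partial B_n)\}$, and let $B_k(e)$ be an $\ell_\infty$-box of radius $k$ around $e$ (say, around the endpoint $x$). I would take $\tilde{\sigma}_n$ to be self-avoiding — this is the natural reading of ``the open path closest to the left side of $\mathfrak{c}$'', and it is the only feature of $\tilde{\sigma}_n$ used below.

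For the two open arms, I would cut $\tilde{\sigma}_n$ at $e$ into the open subpath $\sigma^-$ from $0$ to $x$ and the open subpath $\sigma^+$ from $y$ to $\partial B_n$; by self-avoidance these are vertex-disjoint and neither uses $e$. The endpoint $0$ of $\sigma^-$ lies at $\ell_\infty$-distance $\mathrm{dist}(e,0)\ge k$ from $e$, so $\sigma^-$ must leave $B_k(e)$, and the initial segment of $\sigma^-$ from $x$ up to its first exit of $B_k(e)$ is an open arm from $e$ to $\partial B_k(e)$. Symmetrically, $\sigma^+$ ends on $\partial B_n$, at distance $\mathrm{dist}(e,\partial B_n)\ge k$ from $e$, and supplies a second open arm from $e$ to $\partial B_k(e)$, disjoint from the first.

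For the closed dual arm, I would invoke the duality statement established just before the lemma: because no vertex of $R$ lies on an open connection from $0$ to $\partial B_n$ inside $R\cup\mathfrak{c}$, the dual edge $e^*$ is joined to $\mathfrak{c}$ by a closed dual path $\mathfrak{d}_e$ lying in $R$. Follow $\mathfrak{d}_e$ from $e^*$ to its first meeting point with $\mathfrak{c}$, then continue along $\mathfrak{c}$ toward one of the two endpoints of $\mathfrak{c}$ — a dual neighbor of $0$, or a vertex of $\partial B_n^*$. The resulting closed dual path starts at $e^*$ and ends at distance $\mathrm{dist}(e,0)$ (respectively $\mathrm{dist}(e,\partial B_n)$) from $e$; for either choice this distance is at least $k$, so the path leaves $B_k(e)$ and hence contains a closed dual arm from $e^*$ to $\partial B_k(e)$. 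This arm is built from closed dual edges whereas the two open arms are built from open primal edges, so all three are automatically disjoint, and they occur around $e$ in the order open--open--closed demanded by the definition of a three-arm point.

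I do not expect a real obstacle here: the only genuine content — that the region $R$ forces a closed dual connection from each $e^*$ to $\mathfrak{c}$ — has been isolated before the statement, and what remains is the routine gluing of these pieces plus the check that each glued path actually reaches radius $k$ (which is exactly where the definition $k=\min\{\cdots\}$ is used). If anything needs care it is peripheral to this lemma: making ``the open path closest to the left side of $\mathfrak{c}$'' and the region $R$ precise enough as planar objects that $\tilde{\sigma}_n$ is well-defined and self-avoiding and that $R$ has the stated no-open-crossing property, together with the harmless $\pm 1$ discrepancies between $\ell_\infty$-distances and box radii, none of which affects the statement or the constants in its later applications.
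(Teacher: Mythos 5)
Your proposal is correct and follows the same route as the paper's own (very terse) proof: the two open arms are the two halves of $\tilde{\sigma}_n$ on either side of $e$, and the closed dual arm is obtained by concatenating the dual path from $e^*$ to $\mathfrak{c}$ (supplied by the duality observation preceding the lemma) with a piece of $\mathfrak{c}$ toward either endpoint. You have simply spelled out the routine verification --- that each piece reaches $\partial B_k(e)$, that self-avoidance of $\tilde{\sigma}_n$ gives vertex-disjointness of the two open pieces, and that the dual arm is automatically disjoint from the primal ones --- which the paper leaves implicit.
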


Let $E_3(e,k)$ be the event in the lemma: that $e$ has three such disjoint arms to distance $\min\{\text{dist}(e,0),\text{dist}(e,\partial B_n)\}$. Then, by the above remarks,
\begin{align}
\mathbf{E}[S_{B_n(0)}; C_0^c \mid A_n] &\le \mathbf{E}[\# \tilde{\sigma}_n; C_0^c\mid A_n] \nonumber \\
&\le C \sum_{k=1}^{\lfloor n/2\rfloor}\sum_{e:d(e,0)\wedge d(e,\partial B_n)=k}\mathbf{P}(E_3(e,k)\mid A_n). \label{eqn: e3k}
\end{align}
Here, we write $d$ for $\mathrm{dist}$. Furthermore, by independence,
\begin{equation}\label{eq: nachos_bellegrande}
\mathbf{P}(E_3(e,k)\mid A_n) \le \frac{\mathbf{P}(A_{\mathrm{dist}(0,e)/2})\mathbf{P}(E_3(e,k/2))\mathbf{P}(A_{\mathrm{dist}(0,e)/2+k,n})}{\mathbf{P}(A_n)}
\end{equation}
Here, if $k\le l$, then $A_{k,l}$ denotes the event that there is an open path from $\partial B_k(0)$ to $\partial B_l(0)$. If $k>l$, then $A_{k,l}$ is defined to be the sure event. Since $k\le \mathrm{dist}(0,e)$, we have by quasimultiplicativity \cite[Proposition~12.2]{nolin}
\[\mathbf{P}(A_{\mathrm{dist}(0,e)/2})\mathbf{P}(A_{\mathrm{dist}(0,e)/2+k,n}) \le C\mathbf{P}(A_n).\]

Placing this inequality in \eqref{eq: nachos_bellegrande} and returning to \eqref{eqn: e3k}, we obtain a bound of
\begin{equation}\label{eqn: artem}
\mathbf{E}[S_{B_n(0)}; C_0^c ] \leq C\sum_{k=1}^{\lfloor n/2\rfloor}k\pi_3(k)  \le Cn^2\pi_3(n).
\end{equation}
What we have described here is a standard gluing argument to remove the conditioning on $A_n$. We will use similar arguments in Section~\ref{sec: reckoning}, but provide the details in the above case only.

To derive \eqref{eqn: artem}, we have used the following lemma:
\begin{lma}\label{lem: artem}
There exists $C>0$ such that for all $L\in \mathbb{Z}_+$,
\[\sum_{l=1}^L l\pi_3(l) \le CL^2\pi_3(L).\]
\begin{proof}
By Proposition~\ref{prop: boss} below, we can choose $\beta < 2$ and $c>0$ such that $\pi_3(l,L)\ge c(l/L)^\beta$, and use quasimultiplicativity:
\[\sum_{l=1}^L l \pi_3(l) \asymp \pi_3(L)\sum_{l=1}^L \frac{l}{\pi_3(l,L)}\le C L^\beta \pi_3(L)\sum_{l=1}^L l^{1-\beta} \le CL^2\pi_3(L).\]
\end{proof}
\end{lma}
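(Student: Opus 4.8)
The plan is to reduce the sum $\sum_{l=1}^L l\pi_3(l)$ to $L^2\pi_3(L)$ by expressing $\pi_3(l)$ in terms of $\pi_3(L)$ via quasimultiplicativity and then controlling the resulting series. First I would invoke quasimultiplicativity for the three-arm event (as in \cite[Proposition~12.2]{nolin}): there is a constant $c>0$ so that $\pi_3(l) \asymp \pi_3(L)/\pi_3(l,L)$ for all $1 \le l \le L$, where $\pi_3(l,L)$ denotes the probability of the alternating three-arm event in the annulus $B_L \setminus B_l$. This turns the sum into $\pi_3(L)\sum_{l=1}^L l / \pi_3(l,L)$, up to multiplicative constants, so everything hinges on a lower bound for $\pi_3(l,L)$ of the form $c(l/L)^\beta$ with some exponent $\beta < 2$.

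The key input is exactly such a bound, which I would pull from Proposition~\ref{prop: boss} below: it should assert that there exist $\beta < 2$ and $c>0$ with $\pi_3(l,L) \ge c(l/L)^\beta$. (Heuristically this is safe because the three-arm exponent is $2$ on the triangular lattice and is known to be strictly less than $2$ in the half-plane-free sense needed here — i.e.\ the polychromatic three-arm exponent in the plane is universally below $2$ — but the clean statement I want is the polynomial lower bound on the annulus crossing probability, which is what Proposition~\ref{prop: boss} supplies.) Substituting gives
\[
\sum_{l=1}^L l\pi_3(l) \asymp \pi_3(L)\sum_{l=1}^L \frac{l}{\pi_3(l,L)} \le C L^\beta \pi_3(L)\sum_{l=1}^L l^{1-\beta}.
\]
Since $\beta < 2$, the exponent $1-\beta > -1$, so $\sum_{l=1}^L l^{1-\beta} \le C L^{2-\beta}$ (this is the one routine estimate — a $p$-series / integral comparison, valid whether $1-\beta$ is positive, zero, or in $(-1,0)$). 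Multiplying through yields $\sum_{l=1}^L l\pi_3(l) \le C L^2 \pi_3(L)$, as desired.

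The main obstacle is not in the summation — that is elementary once the exponent is below $2$ — but in having the polynomial lower bound $\pi_3(l,L) \ge c(l/L)^\beta$ with $\beta$ \emph{strictly} less than $2$ available. If one only had $\beta = 2$ (the a priori bound from, say, the RSW-type estimate combined with the BK inequality or a crude arm-separation argument), the sum $\sum l^{1-\beta} = \sum l^{-1}$ would produce a logarithmic factor and the lemma would fail. So the real content is deferred to Proposition~\ref{prop: boss}, whose proof must genuinely exploit that three disjoint arms (two open, one dual, in alternating order) are ``harder'' to realize than a single arm in a way that beats the volume scaling; this is presumably done via a Reimer/BK-type bound against the one-arm (or two-arm) event together with the known strict inequality between the relevant arm exponents, or via an explicit construction forcing the three arms through a logarithmic number of independent annuli. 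Given Proposition~\ref{prop: boss}, the argument above closes the lemma with no further difficulty.
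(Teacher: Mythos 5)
Your proposal is correct and follows essentially the same route as the paper: quasimultiplicativity to write $\pi_3(l) \asymp \pi_3(L)/\pi_3(l,L)$, the polynomial lower bound $\pi_3(l,L) \ge c(l/L)^\beta$ with $\beta < 2$ from Proposition~\ref{prop: boss}, and the elementary estimate $\sum_{l=1}^L l^{1-\beta} \le C L^{2-\beta}$. Your additional remarks about why $\beta < 2$ is the crux (avoiding a logarithmic divergence at $\beta = 2$) are accurate and match the role Proposition~\ref{prop: boss} plays in the paper.
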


\subsection{Estimate on $C_0$}
On $C_0$, we use a more intricate construction to obtain a path $\tilde{\sigma}_n$. 

There is an open path from 0 to $\mathcal{C}_1$ inside $\mathrm{int}(\mathcal{C}_1)$. On the other hand, by duality, every dual edge $e^*$ crossing $\mathcal{C}_1$ has a dual closed connection inside $\mathrm{int}(\mathcal{C}_1)$ from $e^*$ to a dual neighbor of $0$. We let $\mathfrak{c}_1$ be such a closed path from the origin in $\mathrm{int}(\mathcal{C}_1)$ that is closest to the right side of the open path, and let  $\tilde{\sigma}_n^1$ be the open path from $0$ to $\mathcal{C}_1$ in $\mathrm{int}(\mathcal{C}_1)$ that is closest to the left side of the closed path $\mathfrak{c}_1$. For each edge $e\in \tilde{\sigma}^1_n$, there is a closed dual path from $e^*$ to $\mathfrak{c}_1$ inside the region $R_1$ between $\mathfrak{c}_1$ and $\tilde{\sigma}_n^1$, in counterclockwise order. From this, we obtain:
\begin{lma}\label{lma: firstct}
If $e\in \tilde{\sigma}_n^1$, then $e$ has two disjoint open arms and one closed dual arm to distance $\mathrm{dist}(0,e)$.
\begin{proof}
One open arm is obtained by following the open path $\tilde{\sigma}^1_n$ from $e$ to the origin. The second open arm is obtained by following $\tilde{\sigma}^1_n$ in the opposite direction until the open circuit $\mathcal{C}_1$, and then going around $\mathcal{C}_1$ in either direction. Since $\mathcal{C}_1$ goes around $0$, this results in an open arm that leaves the box $B_{\mathrm{dist}(0,e)}(e)$. The closed arm is obtained by following the closed dual path from $e^*$ to $\mathfrak{c}_{1}$ (mentioned in the preceding paragraph) until $\mathfrak{c}_{1}$, and then $\mathfrak{c}_{1}$ to the origin.
\end{proof}
\end{lma}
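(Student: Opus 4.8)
The plan is to build the three arms by hand out of objects already constructed just above the lemma: the open path $\tilde\sigma_n^1$ from $0$ to $\mathcal C_1$ inside $\mathrm{int}(\mathcal C_1)$, the innermost open circuit $\mathcal C_1$, and the closed dual path $\mathfrak c_1$. The edge $e$ splits the self-avoiding path $\tilde\sigma_n^1$ into two sub-paths that are vertex-disjoint apart from $e$ itself, one running from $e$ to the origin and one from $e$ out to $\mathcal C_1$. The first of these is already an open arm from (an endpoint of) $e$ that reaches distance $\mathrm{dist}(0,e)$. For the second open arm I would follow $\tilde\sigma_n^1$ from $e$ to its endpoint on $\mathcal C_1$ and then continue along $\mathcal C_1$ in an arbitrary orientation; since $\tilde\sigma_n^1$ meets $\mathcal C_1$ only at that endpoint, this is disjoint from the first arm.

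The one step that genuinely needs an argument is that this second arm leaves $B_{\mathrm{dist}(0,e)}(e)$. I would deduce this from the fact that $\mathcal C_1$ is a circuit surrounding the origin while $e$ also lies in $\mathrm{int}(\mathcal C_1)$: enclosing $0$ forces $\mathcal C_1$ to pass to the far side of $0$ relative to $e$, hence to contain a vertex at $\ell_\infty$-distance at least $\mathrm{dist}(0,e)$ from $e$ — otherwise all of $\mathcal C_1$ would sit inside $B_{\mathrm{dist}(0,e)}(e)$, which cannot happen for a circuit enclosing a point at distance $\mathrm{dist}(0,e)$ from the center. The clean way to phrase this is a separating argument in the coordinate direction realizing $\mathrm{dist}(0,e)$ (or, if one prefers, a winding-number argument around $0$). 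Walking along $\mathcal C_1$ from the endpoint of $\tilde\sigma_n^1$ to such a vertex then produces the required open arm.

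For the closed dual arm I would invoke the ``closest to the left of $\mathfrak c_1$'' definition of $\tilde\sigma_n^1$ recorded in the paragraph before the lemma: no vertex of the region $R_1$ lying between $\mathfrak c_1$ and $\tilde\sigma_n^1$ can lie on an open connection from $0$ of the relevant kind, so planar duality yields, for each $e\in\tilde\sigma_n^1$, a closed dual path from $e^*$ to $\mathfrak c_1$ staying inside $R_1$; concatenating it with the stretch of $\mathfrak c_1$ running in to the dual neighbor of the origin gives a closed dual arm of length $\mathrm{dist}(0,e)$. Disjointness from the two open arms is immediate, since this arm uses closed dual edges and stays in $R_1\cup\mathfrak c_1$, whereas the open arms lie on $\tilde\sigma_n^1\cup\mathcal C_1$, which bound $R_1$. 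I expect the only real obstacle to be making the geometric claim about $\mathcal C_1$ fully rigorous; the rest is bookkeeping with paths already in hand, and it is the same sort of gluing used in Section~\ref{sec: c}.
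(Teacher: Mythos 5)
Your proof follows the paper's argument exactly: the first open arm runs along $\tilde\sigma_n^1$ from $e$ to $0$, the second runs along $\tilde\sigma_n^1$ to $\mathcal C_1$ and then around the circuit, and the closed arm concatenates the dual path from $e^*$ to $\mathfrak c_1$ with $\mathfrak c_1$ itself. The only difference is that you spell out the easy geometric reason why a circuit around $0$ must exit $B_{\mathrm{dist}(0,e)}(e)$ (since $0$ lies on its boundary), a step the paper simply asserts.
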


Similarly, if $K>1$, there is an open path in $B_n$ from $\mathcal{C}_K$ to $\partial B_n$. By definition of $K$, there is no open circuit around 0 in $B_n \cap \mathrm{ext}(\mathcal{C}_K)$, and so by duality, there is a closed dual path $\mathfrak{c}_{K+1}$ from $(\partial B_n)^*$ to some dual edge $e^*$ with $e\in \mathcal{C}_K$. Define $\tilde{\sigma}^{K+1}_n$ to be the open path from $\mathcal{C}_K$ to $\partial B_n$ which is closest to the left side of $\mathfrak{c}_{K+1}$. For each $e\in \tilde{\sigma}^{K+1}_n$, there is a closed dual path from $e^*$ to $\mathfrak{c}_{K+1}$.

\begin{lma}\label{lma: lastct}
Let $e\in \tilde{\sigma}^{K+1}_n$ and $M=\min(\mathrm{dist}(0,e),\mathrm{dist}(e,\partial B_n))$. The edge $e$ has two disjoint open arms and one closed dual arm to distance $M$.
\begin{proof}
By following $\tilde{\sigma}_n^{K+1}$ in opposite directions from $e$, we obtain two open arms, one to $\partial B_n(0)$, and one to the circuit $\mathcal{C}_K$. Once $\mathcal{C}_K$ is reached, this arm can be extended along the circuit to $\partial B_M(e)$. Note that $0\notin B_M(e)$, so the circuit $\mathcal{C}_K$ reaches outside $B_M$. The dual closed arm is found as before: by following a closed dual path from $e^*$ to $\mathfrak{c}_{K+1}$, and then this latter path to $\partial B_n(0)^*$.
\end{proof}
\end{lma}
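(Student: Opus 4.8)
The statement is the ``outer'' counterpart of Lemma~\ref{lma: firstct}: the plan is to repeat that argument with the roles of the origin and $\partial B_n(0)$ interchanged, and with $\mathcal{C}_1,\mathfrak{c}_1$ replaced by the outermost circuit $\mathcal{C}_K$ and the dual path $\mathfrak{c}_{K+1}$ produced in the previous paragraph. So, for a fixed edge $e\in\tilde{\sigma}_n^{K+1}$ and $M=\min(\dist(0,e),\dist(e,\partial B_n))$, I would produce two vertex-disjoint open paths and one closed dual path starting at $e$, each reaching $\ell_\infty$-distance $M$ from $e$ and occurring in alternating cyclic order around $e$, which is exactly the $\pi_3$-configuration.

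For the first open arm I would simply follow $\tilde{\sigma}_n^{K+1}$ from $e$ toward $\partial B_n(0)$; since $\dist(e,\partial B_n)\ge M$ this already reaches distance $M$. For the second open arm I would follow $\tilde{\sigma}_n^{K+1}$ from $e$ in the other direction to its endpoint $p\in\mathcal{C}_K$ and then traverse $\mathcal{C}_K$ in either direction. To see that this arm leaves $B_M(e)$, I would use that $M\le\dist(0,e)$ forces $0\notin\mathrm{int}\,B_M(e)$; since $B_M(e)$ is convex, a circuit around $0$ contained in $B_M(e)$ would enclose $0$ inside $B_M(e)$, a contradiction, so $\mathcal{C}_K$ (hence the arm, which starts at $e\in B_M(e)$ and is connected) must cross $\partial B_M(e)$. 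For the closed dual arm I would use that $\tilde{\sigma}_n^{K+1}$ is the open crossing from $\mathcal{C}_K$ to $\partial B_n$ closest to the left side of $\mathfrak{c}_{K+1}$: this gives, for each $e\in\tilde{\sigma}_n^{K+1}$, a closed dual path from $e^*$ to $\mathfrak{c}_{K+1}$ inside the region between $\tilde{\sigma}_n^{K+1}$ and $\mathfrak{c}_{K+1}$, and splicing it with the arc of $\mathfrak{c}_{K+1}$ running to $\partial B_n(0)^*$ yields a closed dual arm from $e^*$ to distance $\dist(e,\partial B_n)\ge M$. As in Lemma~\ref{lma: firstct}, taking $\tilde{\sigma}_n^{K+1}$ to be simple and to meet $\mathcal{C}_K\cup\partial B_n$ only at its endpoints makes the two open subpaths disjoint, places the dual arm in the region between them, and gives the required alternating cyclic order around $e$.

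The only step that is not pure bookkeeping is the escape of the second open arm from $B_M(e)$: this is the Jordan-curve/convexity remark that a circuit surrounding $0$ cannot be confined to a box that does not contain $0$ in its interior. Everything else is the same ``leftmost-path region'' argument already used in Lemmas~\ref{lem: pizza_head} and~\ref{lma: firstct}, so I expect no genuine obstacle.
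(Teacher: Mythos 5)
Your proposal is correct and follows the paper's proof essentially verbatim: two open arms from the two ends of $\tilde{\sigma}_n^{K+1}$ (one to $\partial B_n$, one to $\mathcal{C}_K$, extended along the circuit which must leave $B_M(e)$ since $0\notin B_M(e)$), and a closed dual arm by splicing the path from $e^*$ to $\mathfrak{c}_{K+1}$ with $\mathfrak{c}_{K+1}$ itself. You simply spell out slightly more explicitly (via the convexity/Jordan-curve remark) why the circuit must exit $B_M(e)$, which the paper asserts without elaboration.
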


Until Section~\ref{sec: reckoning} then, we assume $K>1$. 
For $k=1,\ldots,K-1$, there is an open path $\gamma_{k+1}$ in $\mathrm{ext}(\mathcal{C}_k)\cap \mathrm{int}(\mathcal{C}_{k+1})$ joining the two circuits. Since $\mathcal{C}_{k+1}$ is the innermost open circuit around the origin in $\mathrm{ext}(\mathcal{C}_k)$, there is a closed dual path connecting these circuits inside $\mathrm{ext}(\mathcal{C}_k)\cap \mathrm{int}(\mathcal{C}_{k+1})$. We let $\mathfrak{c}_{k+1}$ be the closed path that is closest to the right side of $\gamma_{k+1}$, and $\tilde{\sigma}_n^{k+1}$ be the open path that is closest to the left side of $\mathfrak{c}_{k+1}$. For every edge $e\in \tilde{\sigma}^{k+1}_n$, by duality, there is a closed dual path from $e^*$ to $\mathfrak{c}_{k+1}$ inside the region $R_{k+1}$ between these two paths (in the counterclockwise order).

\begin{figure}[h]
\centering
\includegraphics[scale = 0.75]{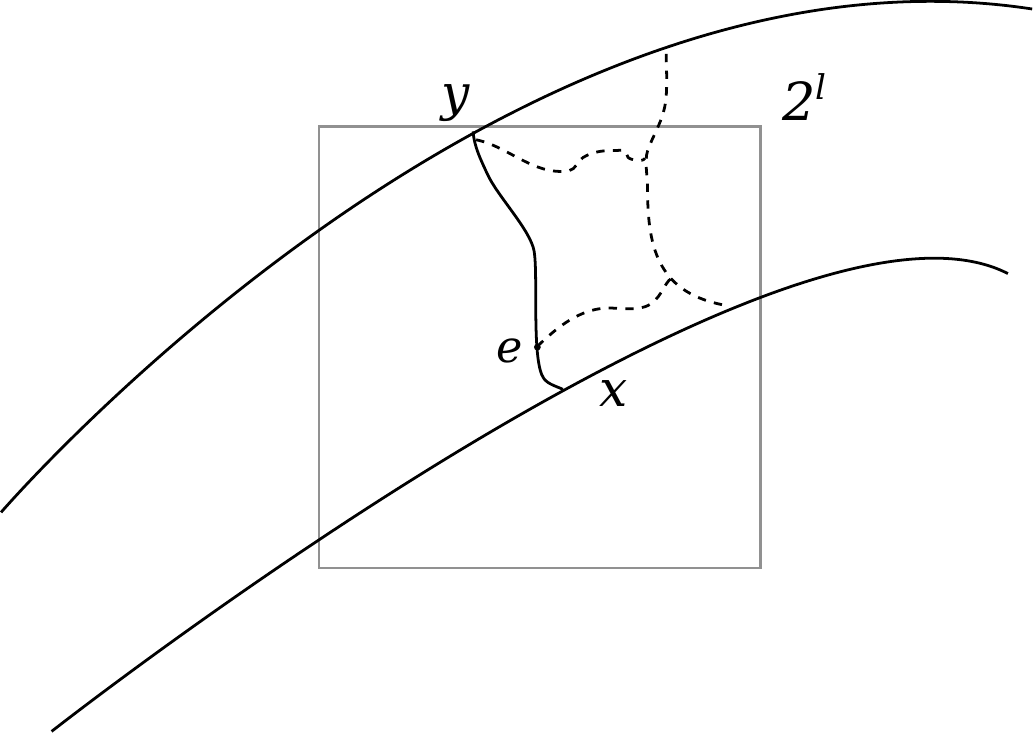}
\label{circuits}
\caption{Depiction of the argument in the proof of Lemma~\ref{lma: middlect}. The box is $B_{2^l}(e)$, where $l = l''$ ($e$ is closer to both the points $x$ and $y$ than it is to 0 or $\partial B_n$). $e$ has two disjoint open arms to distance $2^l$ and one closed dual arm formed by following a closed path to the vertical closed path that connects the circuits, and then a closed dual path to the point $y$. The box $B_{2^l}(e)$ has four disjoint open arms to distance $M$.}
\label{fig: fig_1}
\end{figure}

\begin{lma}\label{lma: middlect}
Let $M=\min(\mathrm{dist}(0,e),\mathrm{dist}(e,\partial B_n))$. Suppose $e\in \tilde{\sigma}_n^k$ for $1<k\leq K$. There is  $1\le l\le \lfloor  \log M \rfloor$ such that
\begin{enumerate}
\item There are two disjoint open arms and one closed dual arm from $e$ to $B_{2^{l-1}}(e)$.
\item If $l < \lfloor \log M \rfloor$, there are four disjoint open arms from $\partial B_{2^l}(e)$ to $\partial B_{M}(e)$.
\end{enumerate}
\begin{proof}
See Figure~\ref{fig: fig_1} for an illustration of the following argument. Let $x$ be the endpoint of $\tilde{\sigma}_n^{k}$ with the least Euclidean distance to either endpoint of $e$ (breaking ties arbitrarily), and let $y$ be the other endpoint. Let $l''$ be the smallest $l'$ such that $2^{l'}\ge M$ or $B_{2^{l'}}(e)$ contains both $x$ and $y$. 
Suppose first that $2^{l''}\ge M$. In this case, we let $l= \lfloor \log M\rfloor$. The two ends of $\tilde{\sigma}_n^{k}$ inside $B_{2^{l-1}}(e)$ form two open arms from $e$ to $\partial B_{2^l}(e)$. The edge of $\tilde{\sigma}_n^{k}$ with endpoint $y$ has a dual edge connected by a dual closed path to $\mathfrak{c}_k$. Since the same is true of the edge $e^*$, we obtain a closed arm from $e^*$ which extends at least to distance $\mathrm{dist}(e,y)\ge 2^{l-1}$.

If $2^{l''}<M$, we let $l=l''$. As in the previous case, we obtain two open arms and a dual closed arm from $e$ to distance $2^{l-1}$. In addition, both $x$ and $y$ are contained in $B_{2^{l}}(e)$, and each lies on one of the open circuits $\mathcal{C}_k$ and $\mathcal{C}_{k+1}$ around 0, whereas $0\notin B_{2^{l-1}}(e)$ by the condition $2^l< M$. By following the circuits in both directions starting from $x,y$, we obtain four open arms from $\partial B_{2^l}(e)$ to $\partial B_{M}(e)$.
\end{proof}
\end{lma}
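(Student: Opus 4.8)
The plan is to read off the three required arms at $e$, and the four large-scale open arms, directly from the two innermost circuits $\mathcal{C}_{k-1}$ and $\mathcal{C}_k$ bounding the annulus that contains $\tilde{\sigma}_n^k$, together with the closed dual path $\mathfrak{c}_k$ that runs alongside $\tilde{\sigma}_n^k$ through $R_k$. Write $x$ and $y$ for the two endpoints of $\tilde{\sigma}_n^k$ — one on $\mathcal{C}_{k-1}$ and one on $\mathcal{C}_k$ — labelled so that $x$ is the one at smaller $\ell_\infty$ distance from $e$; deleting $e$ splits $\tilde{\sigma}_n^k$ into two open sub-paths, from $e$ to $x$ and from $e$ to $y$, which (as in the construction of Section \ref{sec: c}) may be taken disjoint and self-avoiding. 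The closed dual arm will always be produced exactly as in Lemmas \ref{lma: firstct} and \ref{lma: lastct}: run the closed dual path from $e^*$ into $R_k$ until it meets $\mathfrak{c}_k$, follow $\mathfrak{c}_k$, and splice on the closed dual path attached to the terminal edge of $\tilde{\sigma}_n^k$ at $y$; this yields a closed dual connection from $e^*$ to a dual edge incident to $y$, of diameter at least $\mathrm{dist}(e,y)$.

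Next I would fix the scale $l$ as in the statement: let $l''$ be the least $l'$ with $2^{l'}\ge M$ or $B_{2^{l'}}(e)\supseteq\{x,y\}$, and set $l=l''$ if $2^{l''}<M$, and $l=\lfloor\log M\rfloor$ otherwise. For part (2), the hypothesis $l<\lfloor\log M\rfloor$ forces $l=l''$ with $2^l<M$ and $B_{2^l}(e)\supseteq\{x,y\}$. Since $M\le\mathrm{dist}(e,0)$ and $M\le\mathrm{dist}(e,\partial B_n)$, the box $B_M(e)$ contains neither $0$ nor any point of $\partial B_n$, so each circuit $\mathcal{C}_{k-1},\mathcal{C}_k$ — a loop around $0$ lying in $B_n$ — cannot be contained in $B_M(e)$ and hence must cross $\partial B_M(e)$; following $\mathcal{C}_{k-1}$ from $x$ in both directions and $\mathcal{C}_k$ from $y$ in both directions produces four open arms from $\partial B_{2^l}(e)$ to $\partial B_M(e)$, pairwise disjoint because $\mathcal{C}_{k-1}\cap\mathcal{C}_k=\emptyset$ and each circuit is self-avoiding. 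For part (1), the two open arms are the two sub-paths of $\tilde{\sigma}_n^k$ leaving $e$, extended along $\mathcal{C}_{k-1}$ (resp.\ $\mathcal{C}_k$) once they reach that circuit; as $2^{l-1}<M\le\mathrm{dist}(e,0)$ we again have $0\notin B_{2^{l-1}}(e)$, so the extended arms reach $\partial B_{2^{l-1}}(e)$, with disjointness as before. Finally, the closed dual arm of the first paragraph reaches distance $2^{l-1}$ because $l-1<l''$ in both the capped and uncapped cases, so minimality of $l''$ gives $B_{2^{l-1}}(e)\not\supseteq\{x,y\}$, whence $y\notin B_{2^{l-1}}(e)$ (recall $x$ is the closer endpoint) and $\mathrm{dist}(e,y)\ge 2^{l-1}$.

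The one genuinely delicate point is the case split in the choice of $l$: when $2^{l''}\ge M$ there is no room left in $B_M(e)\setminus\{0\}$ to run the four-arm configuration — which is why the second clause is stated conditionally — and one must instead verify that the local three-arm event persists all the way out to radius $2^{l-1}$ with $l=\lfloor\log M\rfloor$, which is exactly where minimality of $l''$ is used, to keep $y$ (and with it the far end of the closed dual connection) outside $B_{2^{l-1}}(e)$. The remaining ingredients — that a circuit around $0$ must leave every box avoiding $0$, that four arms coming from two disjoint self-avoiding circuits are disjoint, and that $\tilde{\sigma}_n^k$ and $\mathfrak{c}_k$ from Section \ref{sec: c} can be taken self-avoiding — are routine consequences of planar duality; see Figure \ref{fig: fig_1} for the picture.
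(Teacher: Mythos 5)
Your proof is correct and follows essentially the same route as the paper's: you choose the scale $l$ via the same minimality condition on $l''$, read off the three local arms from $\tilde{\sigma}_n^k$ and $\mathfrak{c}_k$, and read off the four macroscopic open arms from the two bounding circuits $\mathcal{C}_{k-1}$ and $\mathcal{C}_k$. If anything you are slightly more careful than the paper's prose in two spots: you explicitly extend the open arm toward the nearby endpoint $x$ along its circuit to guarantee it exits $B_{2^{l-1}}(e)$ in the capped case $l=\lfloor\log M\rfloor$ (the paper's ``the two ends of $\tilde{\sigma}_n^k$ inside $B_{2^{l-1}}(e)$ form two open arms'' glosses over the possibility that $x\in B_{2^{l-1}}(e)$), and you use the $\ell_\infty$ distance to order $x,y$, which matches the $\ell_\infty$ geometry of the boxes more cleanly than the Euclidean tie-break in the paper; neither change affects the substance of the argument.
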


Before we move on, we need the following final lemma to deal with the simpler case of the edges inside one of the $\mathcal{C}_k$'s:
\begin{lma}\label{lma: circuits}
Suppose $C_0$ occurs and $e\in \mathcal{C}_k$ for some $1\le k\le K$. As before, let $M=\min(\mathrm{dist}(0,e),\mathrm{dist}(e,\partial B_n))$. There is $1\le l\le \lceil \log M\rceil$ such that:
\begin{enumerate}
\item $e$ has two disjoint open arms and one closed dual arm to $\partial B_{2^{l-1}}(e)$
\item If $2^l < M$, there are four disjoint open arms from $\partial B_{2^l}(e)$ to $\partial B_M(e)$.
\end{enumerate}
\begin{proof}
By duality, since $e\in \mathcal{C}_k$, there is a closed dual path in $\mathrm{int}(\mathcal{C}_k)$ connecting $e^*$ to a dual neighbor of $0$ if $k=1$ or to the dual of some edge $e'\in \mathcal{C}_{k-1}$ if $k>1$. Let $l$ be the minimum $l'$ such that $2^{l'}\ge \mathrm{dist}(e,e')$ if $k>1$ (or $2^l\ge \mathrm{dist}(e,0)$ if $k=1$). Then there are three arms from $e$ to $\partial B_{2^l}(e)$.

If $2^l < M$, since $e'\in B_{2^l}(e)$, we can find four arms from $\partial B_{2^l}(e)$ by following the circuits $\mathcal{C}_k$ and $\mathcal{C}_{k-1}$ from $e$ and $e'$ in both directions.
\end{proof}
\end{lma}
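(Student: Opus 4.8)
The plan is to prove Lemma~\ref{lma: circuits} exactly as the ``circuit'' counterpart of Lemmas~\ref{lma: firstct}--\ref{lma: middlect}. For $e\in\mathcal C_k$ the two open arms will be the two pieces of the circuit $\mathcal C_k$ obtained by deleting $e$ and traversing what remains toward each endpoint of $e$; the closed dual arm will come from the closed dual path forced into $\mathrm{int}(\mathcal C_k)$ by the minimality of the innermost circuits recorded in Section~\ref{sec: c}; and the four outer open arms, when $2^l<M$, will be built from the two disjoint circuits $\mathcal C_{k-1}$ and $\mathcal C_k$. The one planar fact used throughout is that an open circuit surrounding $0$ cannot lie inside any box $B_r(e)$ with $0\notin\overline{B_r(e)}$ (such a circuit encloses a neighborhood of $0$, hence encloses points outside $\overline{B_r(e)}$), so it must meet $\partial B_r(e)$; and moreover deleting one of its edges and stopping the resulting self-avoiding path the first time --- from each end --- it reaches $\partial B_r(e)$ produces two vertex-disjoint open arms to $\partial B_r(e)$ (disjoint because the ``first exit'' index precedes the ``last return'' index along that path).

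First I would produce the closed dual arm and use it to choose $l$. By duality and minimality, the dual edge $e^*$ is joined, inside $\mathrm{int}(\mathcal C_k)$, to a dual neighbor of $0$ when $k=1$, and to the dual edge of some $e'\in\mathcal C_{k-1}$ when $k>1$; such a dual path crosses $\partial B_r(e)$ for every $r<\mathrm{dist}(e,0)$, resp.\ $r<\mathrm{dist}(e,e')$. I would then set $l=\lceil\log M\rceil$ when $k=1$, and $l=\min(\lceil\log M\rceil,\,\min\{l'\ge1:2^{l'}\ge\mathrm{dist}(e,e')\})$ when $k>1$, so automatically $1\le l\le\lceil\log M\rceil$. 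In every case $2^{l-1}<M\le\mathrm{dist}(e,0)$ and $2^{l-1}<\mathrm{dist}(e,e')$ (by minimality of $l$, or because the cap already forces $2^{l-1}<M$), so the closed dual path reaches $\partial B_{2^{l-1}}(e)$, and $\mathcal C_k$ --- a circuit around $0\notin\overline{B_{2^{l-1}}(e)}$ --- yields two disjoint open arms to $\partial B_{2^{l-1}}(e)$ by the truncation argument above. This is part~(1).

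For part~(2), suppose $2^l<M$. Then $k>1$ and $l=\min\{l'\ge1:2^{l'}\ge\mathrm{dist}(e,e')\}$, so $e'\in\mathcal C_{k-1}\cap B_{2^l}(e)$, while $0\notin\overline{B_{2^l}(e)}$ (since $2^l<M\le\mathrm{dist}(0,e)$) and neither of $\mathcal C_{k-1},\mathcal C_k$ lies inside $\overline{B_M(e)}$. Applying the circuit fact to $\mathcal C_k$ with the box $B_M(e)$ gives two disjoint open arms from $e$ to $\partial B_M(e)$ along $\mathcal C_k$; truncating each at its last visit to $\partial B_{2^l}(e)$ turns them into two disjoint open crossings of the annulus $B_M(e)\setminus B_{2^l}(e)$. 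The identical construction applied to $\mathcal C_{k-1}$ through the point $e'$ gives two more such crossings, and since $\mathcal C_{k-1}\cap\mathcal C_k=\emptyset$ the four crossings are mutually disjoint; these are the four open arms from $\partial B_{2^l}(e)$ to $\partial B_M(e)$.

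I do not expect a real obstacle here: the substance is entirely the duality/minimality input of Section~\ref{sec: c} together with the elementary fact about circuits around $0$. The only places requiring care are bookkeeping ones --- verifying that the truncated pieces of the circuits are genuinely vertex-disjoint and lie in the prescribed annulus (this is the ordering of the first-exit/last-return indices), and checking that the capped choice of $l$ simultaneously keeps $l\le\lceil\log M\rceil$ and leaves $2^{l-1}$ strictly below each of $M$, $\mathrm{dist}(e,0)$, and (when $k>1$) $\mathrm{dist}(e,e')$, so that all three inner arms actually reach $\partial B_{2^{l-1}}(e)$. Both are routine.
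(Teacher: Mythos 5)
Your proof is correct and follows essentially the same route as the paper's: obtain the closed dual arm from the duality/minimality of the innermost circuits, choose $l$ to be the dyadic scale of $\mathrm{dist}(e,e')$ (resp.\ $\mathrm{dist}(e,0)$ for $k=1$), and get the open arms by traversing $\mathcal{C}_k$ (and, for part~2, $\mathcal{C}_{k-1}$) in both directions. The one small improvement over the paper's terse write-up is that you explicitly cap $l$ at $\lceil\log M\rceil$ and check that this cap keeps $2^{l-1}$ below both $M$ and $\mathrm{dist}(e,e')$, which the paper leaves implicit.
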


\subsection{Reckoning}\label{sec: reckoning}
In this section we do a final calculation. In \eqref{eqn: artem}, we have already dealt with the case when $C_0^c$ holds. By the constructions in the previous section, on $C_0^c$, we can define an open path $\tilde{\sigma}_n$ from $0$ to $\partial B_n$ by concatenating $\tilde{\sigma}_n^1$,\ldots $\tilde{\sigma}_n^{K+1}$ and pieces of $\mathcal{C}_1,\ldots \mathcal{C}_K$ linking them into a self-avoiding path. It remains to estimate
\begin{align}
\mathbf{E}[ S_{B_n(0)}; C_0\mid A_n ]&\le \mathbf{E}[\#\tilde{\sigma}_n; C_0\mid A_n] \nonumber \\
&\le \sum_{e\in B_n}\mathbf{P}(e\in (\cup_{k=1}^K \mathcal{C}_k) \cup (\cup_{k=1}^{K+1}\tilde{\sigma}_n^k)\mid A_n).\label{eqn: labor-union}
\end{align}

We will apply the lemmas in the previous section to bound the probability in the sum by the alternating 3-arm probability to distance $M=M(e)=\min(\mathrm{dist}(0,e),\mathrm{dist}(e,\partial B_n))$.

Let $\mathcal{A}_1(e)$ be the event $\{e\in \tilde{\sigma}_n^1\}$. Applying Lemma \ref{lma: firstct} and standard gluing constructions to remove the conditioning (as in the argument below Lemma~\ref{lem: pizza_head}), we have
\[\mathbf{P}(\mathcal{A}_1(e) \mid A_n)\le \mathbf{P}(A_3(e,M(e)) \mid A_n) \asymp \pi_3(\mathrm{dist}(0,e))\le \pi_3(M(e)).\]

Let $\mathcal{A}_2(e)$ be the event $\{e\in \tilde{\sigma}_n^{K+1}\}$. Using Lemma \ref{lma: lastct} and gluing constructions, we obtain
\[\mathbf{P}(\mathcal{A}_2(e) \mid A_n) \le \pi_3(M(e)).\]

We turn to the event $A_3(e)=\{e\in \cup_{k=2}^K \tilde{\sigma}_n^k\}$. Using Lemma \ref{lma: middlect}, we have, summing over choices of $1\le l\le \lfloor\log M\rfloor$ and using independence and gluing:
\[\mathbf{P}(A_3(e) \mid A_n)\le C \sum_{l=1}^{\lfloor \log M\rfloor} \pi_3(2^{l-1})\pi_4'(2^l,M),
\]
where $\pi'_4$ denotes the monochromatic 4-arm probability. Using Reimer's work as in \cite[p. 1291]{nolinbeffara} (directly above Theorem~5 there, where the authors derive $\alpha_j' \geq \alpha_j$), we have $\pi_4'\le \pi_4$, where $\pi_4$ is the alternating 4-arm probability. So, using Reimer's inequality \cite{reimer} and quasimultiplicativity, for some $\epsilon>0$, the above sum is bounded by up to constant factor by
\begin{align*}
\sum_{l=1}^{\lfloor \log M\rfloor} \pi_3(2^{l-1})\pi_3(2^l,M)\pi_1(2^l,M) \le &\sum_{l=1}^{\lfloor \log M\rfloor} \pi_3(2^{l-1})\pi_3(2^l,M)\frac{2^{\epsilon l}}{M^\epsilon}\\
\le& C\pi_3(M) \sum_{l=1}^{\lfloor \log M\rfloor} 2^{\epsilon (l-\log M)}. 
\end{align*}
The sum in the final term is 
\begin{align*}
M^{-\epsilon} \sum_{l=1}^{\lfloor \log M\rfloor} 2^{\epsilon l} \le C,
\end{align*}
so 
\[\mathbf{P}(\mathcal{A}_3(e) \mid A_n)\le C\pi_3(\min(\mathrm{dist}(0,e),\mathrm{dist}(e,\partial B_n)).\]

Next, let $\mathcal{A}_4(e) = \{e\in \cup_{k=1}^{K} \mathcal{C}_k\}$. Applying Lemma \ref{lma: circuits} and gluing, and summing over values of $l$, we obtain as in the cases above:
\[\mathbf{P}(\mathcal{A}_4(e) \mid A_n)\le C\pi_3(\mathrm{dist}(e,0))\le C\pi_3(M(e)).\]

We can now return to the probability in \eqref{eqn: labor-union}, and use the estimate:
\begin{align*}
\mathbf{P}(e\in (\cup_{k=1}^K \mathcal{C}_k) \cup (\cup_{k=1}^{K+1}\tilde{\sigma}_n^k)\mid A_n) &\le \sum_{j=1}^4 \mathbf{P}(\mathcal{A}_j(e) \mid A_n)\\
&\le C \pi_3(\min(\mathrm{dist}(0,e),\mathrm{dist}(e,\partial B_n))).
\end{align*}
Summing over the values of $e$ and using Lemma \ref{lem: artem}, we find
\[\mathbf{E}[S_{B_n(0)}; C_0\mid A_n] \le Cn^2\pi_3(n).\]
This completes the proof of Theorem~\ref{thm: radial}.

\section{The unrestricted chemical distance}
\label{sec: chem}
In this section, we prove Corollary \ref{thm: pt2pt} concerning the distance between two points $x$ and $y$ in $\mathbb{Z}^2$, with no spatial restriction on the paths connecting them. As a first step, we consider the case where $x$ and $y$ are connected in a box. More precisely, suppose $x,y\in B_n(0)$; we let $\mathrm{dist}^{B_n}_{chem}(x,y)$ be the least number of edges in any open path connecting $x$ to $y$ inside $B_n(0)$. We  use the same strategy as in the proof of Theorem~\ref{thm: radial} to show 
\begin{prop}\label{prop: superman}
There is a constant $C>0$ such that, for all $n$ and all $x,y\in B_{n/2}(0)$,
\begin{equation}
\mathbf{E}[\mathrm{dist}^{B_n}_{chem}(x,y)\mid x\leftrightarrow_{B_n} y] \le Cn^2\pi_3(n)\label{eqn: pttoptbox}.
\end{equation}
Here, $\{x\leftrightarrow_{B_n} y\}$ is the event that $x$ is connected to $y$ by an open path inside $B_n$.
\end{prop}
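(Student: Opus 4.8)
The plan is to mimic closely the proof of Theorem~\ref{thm: radial}, but with the origin replaced by the pair of points $x$ and $y$, and with the "radial" annuli centered at $x$ (or $y$). First I would dispose of the analogue of the case $C_0^c$: if there is no open circuit around $\{x,y\}$ inside $B_n$ on the event $\{x\leftrightarrow_{B_n}y\}$, then duality gives a closed dual path separating $x$ from $y$, and the open connecting path closest to one side of this dual path consists of edges with two open arms and one closed dual arm to distance $\min(\mathrm{dist}(e,x),\mathrm{dist}(e,y))$ (or to $\partial B_n$ if $e$ is far from both $x,y$ but, say, $\{x,y\}$ lie deep inside $B_{n/2}$, so this is not the binding constraint). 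Summing the three-arm probability $\pi_3$ over edges $e$ at distance $k$ from $x$ and invoking Lemma~\ref{lem: artem} gives the bound $Cn^2\pi_3(n)$, exactly as in \eqref{eqn: artem}; the gluing/quasimultiplicativity step to remove the conditioning on $\{x\leftrightarrow_{B_n}y\}$ is as in the discussion below Lemma~\ref{lem: pizza_head}.

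Next, on the event that there \emph{are} open circuits around $\{x,y\}$ in $B_n$, I would run the same circuit construction: let $K$ be the maximal number of disjoint open circuits around $\{x,y\}$ in $B_n$, let $\mathcal{C}_1,\dots,\mathcal{C}_K$ be the successive innermost ones, and build a path $\tilde\sigma_n$ from $x$ to $y$ by going from $x$ out to $\mathcal{C}_1$ (inside $\mathrm{int}(\mathcal{C}_1)$), then between consecutive circuits, and finally back in from $\mathcal{C}_K$ to $y$, linking everything with arcs of the $\mathcal{C}_k$'s into a self-avoiding path. The role of "$0$" in Lemmas~\ref{lma: firstct}--\ref{lma: circuits} is played by $x$ for the inner piece and by $y$ for the outer piece; since each $\mathcal{C}_k$ surrounds both $x$ and $y$, the arm-counting arguments go through verbatim. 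The key point, as in Section~\ref{sec: reckoning}, is that an edge $e\in\tilde\sigma_n^k$ (or $e\in\mathcal{C}_k$) has three arms to some small scale $2^{l-1}$ and four arms from $2^l$ out to the relevant distance $M=M(e)$, where now $M(e)=\min(\mathrm{dist}(e,x),\mathrm{dist}(e,y),\mathrm{dist}(e,\partial B_n))$; using $\pi_4'\le\pi_4$ (Reimer), Reimer's inequality, and quasimultiplicativity with the polynomial bound $\pi_1(2^l,M)\le C(2^l/M)^\epsilon$ from a forthcoming proposition, the sum over $l$ of $\pi_3(2^{l-1})\pi_4'(2^l,M)$ is bounded by $C\pi_3(M)$. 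Summing $\pi_3(M(e))$ over $e\in B_n$ and applying Lemma~\ref{lem: artem} again yields $Cn^2\pi_3(n)$.

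There are two points where the transcription is not completely automatic and which I expect to be the main obstacles. The first is geometric: with two marked points rather than one, when $x$ and $y$ are at distance comparable to $n$ the "small scale" $2^l$ in the middle-circuit lemma (Lemma~\ref{lma: middlect}) is governed by the separation between the two endpoints $x',y'$ of $\tilde\sigma_n^k$ lying on the circuits, and one must check that the condition $2^{l-1}<M(e)$ (needed to conclude $x\notin B_{2^{l-1}}(e)$, etc., so that the circuit really does exit $B_{2^{l-1}}(e)$) still forces $e\in B_{2^l}(e')$ so that four arms emanate from $\partial B_{2^l}(e)$; the hypothesis $x,y\in B_{n/2}(0)$ is presumably used to guarantee there is enough room. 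The second is bookkeeping: the sum over edges $e\in B_n$ of $\pi_3(M(e))$ must be organized by the shells $\{e:\mathrm{dist}(e,x)\wedge\mathrm{dist}(e,y)\wedge\mathrm{dist}(e,\partial B_n)=k\}$, each of which has $O(k)$ edges (there are three "centers" $x,y,\partial B_n$ but this only costs a constant), so the total is $\le C\sum_{k=1}^{n}k\pi_3(k)\le Cn^2\pi_3(n)$ by Lemma~\ref{lem: artem}. Neither obstacle is serious, which is exactly why the paper treats this proposition as a routine adaptation; I would write it by pointing to the corresponding steps of Section~\ref{sec: radial} and only spelling out the modified arm-scale in the two-point middle-circuit lemma.
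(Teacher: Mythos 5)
Your dichotomy is not the right one, and this is the only nontrivial content of the paper's proof, so the gap is real. You propose to split on whether there is an open circuit around the \emph{pair} $\{x,y\}$. But such circuits are irrelevant to connecting $x$ to $y$: an open path from $x$ to $y$ can stay entirely inside the innermost such circuit and never touch any of them, so building $\tilde\sigma_n$ by going ``from $x$ out to $\mathcal{C}_1$, out to $\mathcal{C}_K$, and then back in to $y$'' is a detour that crosses every circuit twice, and there is no reason for its edges to carry three arms (in the radial proof, the closed dual arm for an edge on $\tilde\sigma_n^1$ comes from the dual path running from a neighbor of $0$ out to $\mathcal{C}_1$; here there is no marked point \emph{inside} $\mathcal{C}_1$ that plays the role of $0$ for the duality). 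The duality step in your complementary case is also wrong: ``no open circuit around $\{x,y\}$'' yields a closed dual path from a dual neighbor of $x$ or $y$ out to $\partial B_n^*$, not a closed dual path ``separating $x$ from $y$'' --- that conclusion requires the hypothesis ``no open curve separating $x$ from $y$.''

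The correct analogue of ``circuits around $0$'' (which in the radial case are exactly the open curves separating $0$ from $\partial B_n$) is the family of open lattice curves \emph{separating $x$ from $y$ inside $B_n$}. Such a curve is either a circuit around $x$ with $y$ in its exterior, a circuit around $y$ with $x$ in its exterior, or a lattice arc with both endpoints on $\partial B_n$ which, together with part of $\partial B_n$, encloses one point but not the other; this last type does not appear in your plan at all. The paper sets $(C_0')^c$ to be ``a closed dual path connects a dual neighbor of $x$ to a dual neighbor of $y$ in $B_n^*$'' (equivalently, no open separating curve), and on $C_0'$ it defines $\mathcal{C}_1(x),\dots,\mathcal{C}_K(x)$ to be the successive \emph{separating curves}, ordered by minimality of the component $\mathcal{B}_x(\cdot)$ containing $x$. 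With this replacement the connecting path does cross every $\mathcal{C}_k(x)$ exactly once, the regions between consecutive separators carry the closed dual structure needed for the three-arm estimates, and the argument of Section~\ref{sec: radial} does transpose. Your second-half bookkeeping (shells indexed by $M(e)=\min(\mathrm{dist}(e,x),\mathrm{dist}(e,y),\mathrm{dist}(e,\partial B_n))$, Lemma~\ref{lem: artem}, the $\pi_4'\le\pi_4$ and quasimultiplicativity computations) is fine once the right family of separators is in place, but the construction you describe for $C_0$ would not produce a self-avoiding path from $x$ to $y$ whose edges admit the three-arm bound.
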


It is the estimate \eqref{eqn: pttoptbox} which gives us some control over the tail of the distance between $x$ and $y$, leading to \eqref{eqn: chem}. We assume Proposition \ref{prop: superman} for the moment, and prove Corollary \ref{thm: pt2pt}.

\begin{proof}[Proof of Corollary \ref{thm: pt2pt}]
For $x,y$ given and $m \geq 0$, let $B_m' = B_m(\frac{1}{2}(x+y))$. Define $d=\|x-y\|_\infty$ and let $K$ be the least $K'\ge 1$ such that there is a closed dual circuit in $B_{2^{K+1}d}'\setminus B_{2^K d}'$ around $B_{2^K d}'$. By planarity, if $K\le L$, and $x\leftrightarrow y$, then $x$ is connected to $y$ in $B_{2^{L+1}d}'$.

For $\lambda>0$ and $L \geq 1$, write
\begin{align}\label{eqn: cookie}
\mathbf{P}(\mathrm{dist}_{chem}(x,y)\ge \lambda d^2\pi_3(d) \mid x\leftrightarrow y) &\le \mathbf{P}(\mathrm{dist}_{chem}(x,y)\ge \lambda d^2\pi_3(d), K\le L\mid x\leftrightarrow y)\\
&\quad + \mathbf{P}(K\ge L\mid x\leftrightarrow y). \nonumber
\end{align}
We claim first that there exists a $c>0$ independent of $x$ and $y$ such that
\[\mathbf{P}(K\ge L \mid x\leftrightarrow y) \le Ce^{-cL}.\]
To see this, note that $\{ x\leftrightarrow y\}\subset \{x\leftrightarrow B_{d/4}(x)\}\cap \{y\leftrightarrow B_{d/4}(y)\}$. Furthermore, a simple construction using the Russo-Seymour-Welsh methodology (see Figure \ref{const}) gives
\begin{equation}\label{eq: RSW_argument}
\mathbf{P}(x\leftrightarrow y)\geq C \mathbf{P}(x\leftrightarrow B_{d/4}(x), y\leftrightarrow B_{d/4}(y)).
\end{equation}

\begin{figure}
\centering
\includegraphics[scale = 0.75]{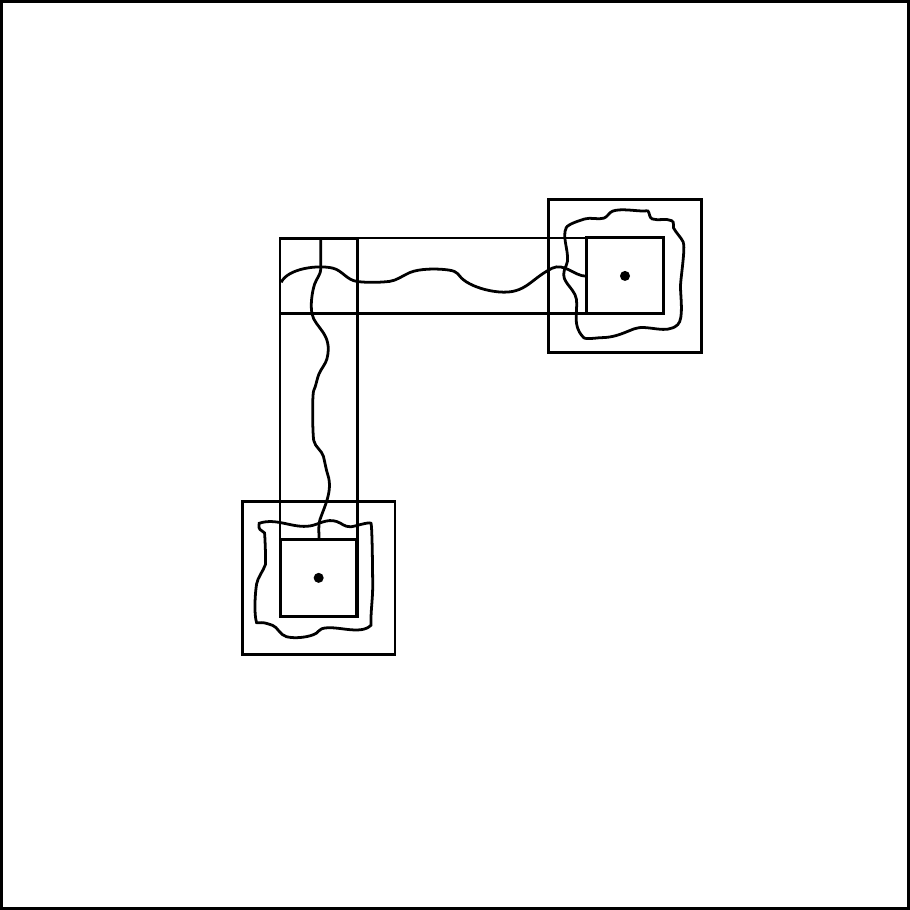}
\caption{Depiction of the RSW-type argument used in the proof of Corollary~\ref{thm: pt2pt}. The points $x$ and $y$ are the centers of the small squares. By gluing together open circuits around $x$ and $y$ in $B_{d/4}(x) \setminus B_{d/8}(x)$ and $B_{d/4}(y) \setminus B_{d/8}(y)$ with the horizontal and vertical open paths that reside in the small rectangles, any open connections from $x$ to $\partial B_{d/4}(x)$ and $y$ to $\partial B_{d/4}(y)$ must be connected to each other. The displayed open paths cost constant probability by RSW, so the FKG inequality gives \eqref{eq: RSW_argument}.}
\label{const}
\end{figure}

Since $\{K \ge L\}$ depends only on edges outside $B_{2d}'$, the above implies
\begin{equation}\label{eqn: decay}
\mathbf{P}(K\ge L\mid x\leftrightarrow y) \le C\mathbf{P}(\cap_{k=1}^{L-1}D_k^c)\le Ce^{-cL},
\end{equation}
where $D_k$ is the event that there is a closed dual circuit around $B_{2^kd}'$ in $B_{2^{k+1}d}'\setminus B_{2^kd}'$.

Returning to \eqref{eqn: cookie}, we have
\begin{align}
&\mathbf{P}(\mathrm{dist}_{chem}(x,y)\ge \lambda d^2\pi_3(d), K\le L\mid x\leftrightarrow y) \\
\le~& \frac{\mathbf{P}(\mathrm{dist}^{B_{2^{L+1}d}'}_{chem}(x,y)\ge \lambda d^2\pi_3(d), x\leftrightarrow_{B'_{2^{L+1}d}} y )}{\mathbf{P}(x\leftrightarrow_{B'_{2^{L+1}d}} y)} \nonumber \\
\le~& \frac{C}{\lambda}\frac{2^{2L}d^2\pi_3(2^Ld)}{d^2\pi_3(d)} \nonumber \\
\le~& \frac{C}{\lambda} 2^{2L}\pi_3(d,2^Ld).\label{eqn:  22L}
\end{align}
Choosing $L=\lfloor \log \lambda^{1/2}\rfloor$, we find by \eqref{eqn: decay} and \eqref{eqn: 22L}
\[\mathbf{P}(\mathrm{dist}_{chem}(x,y)\ge \lambda d^2\pi_3(d))\le \frac{C}{\lambda^{c'}}+\lambda^{-c'},\]
with some (small) $c'>0$.
\end{proof}

We now turn to the proof of the bound \eqref{eqn: pttoptbox}.
\begin{proof}[Proof of Proposition \ref{prop: superman}]
As mentioned previously, the proof is very similar to that of Theorem \ref{thm: radial}. Suppose first that we are on the event $\{x\leftrightarrow y\}\cap (C_0')^c$, where $(C'_0)^c$ is the event that there is a closed dual path in $B_n(0)^*$ from a dual vertex adjacent to $x$ to a dual vertex adjacent to $y$. Equivalently, there is no open path in $B_n(0)$ separating $x$ from $y$.

Then, as in Section \ref{sec: C0c}, there is a path $\tilde{\sigma}_n$ from $x$ to $y$ in $B_n(0)$ such that each edge $e\in \tilde{\sigma}_n$ has two disjoint open arms and one closed dual arm to distance
\[\min(\mathrm{dist}(e,x),\mathrm{dist}(e,y)).\]
In this case, we obtain the result in a manner analogous to Theorem \ref{thm: radial}.
 
On $\{x\leftrightarrow y\} \cap C_0'$, let $K$ be the maximal number of disjoint, possibly closed Jordan curves consisting of open edges in $B_n(0)$ separating $x$ from $y$ in $B_n(0)$. Each such curve separates $B_n(0)$ into two connected components, one containing $x$, and the other containing $y$. 

If a lattice path $\gamma=(e_1,\ldots, e_N)$ of open edges separates $x$ from $y$, then either
\begin{enumerate}
\item $\gamma$ is a circuit ($e_1$ touches $e_N$) around $x$ with $y \notin \mathrm{int}(\gamma)$, or $\gamma$ is  circuit around $y$ with $y\notin\mathrm{int}(\gamma)$. 
\item or both $e_1$ and $e_N$ have an endpoint on $\partial B_n(0)$, and any circuit formed by concatenating $\gamma$ with a portion of $\partial B_n(0)$ between $e_1$ and $e_N$ contains $x$ but not $y$, or contains $y$ but not $x$. In this case, we may assume $\gamma$ touches the boundary only twice.
\end{enumerate}
Given a lattice curve $\gamma$ separating $x$ from $y$ in $B_n(0)$, we denote by $\mathcal{B}_x(\gamma)$ the component of $B_n\setminus \gamma$ containing $x$.

On $C_0$, let $\mathcal{C}_1(x)$ be the path of open edges separating $x$ from $y$ such that $\mathcal{B}_x(\mathcal{C}_1(x))$ is minimal. 
Similarly, $\mathcal{C}_2(x)$ is defined to be the path of open edges lying in $B_n\setminus \mathcal{B}_x(\mathcal{C}_1(x))$ separating $x$ and $y$ such that $\mathcal{B}_x(\mathcal{C}_2(x))\setminus \mathcal{B}_x(\mathcal{C}_1(x))$ is minimal, and so on for $\mathcal{C}_3(x), \ldots, \mathcal{C}_K(x)$. At this point we can transpose the proof of Theorem \ref{thm: radial} almost verbatim, replacing successive innermost circuits by the paths $\mathcal{C}_1(x), \ldots, \mathcal{C}_K(x)$. 
\end{proof}

\section{Non-concentration}
The bound of order $\lambda^{-c}$ for $c>0$ obtained in the previous section is too weak to even bound the expectation. In fact, here we prove that the chemical distance cannot be very concentrated. Let $\mathbf{e}_1=(1,0)$. Then
\begin{equation}\label{moment}
\mathbf{E}[\mathrm{dist}_{chem}(0,\mathbf{e}_1)^2; 0\leftrightarrow \mathbf{e}_1] =\infty.
\end{equation}
Proposition \ref{prop: nomoment} is equivalent to equation \eqref{moment} because it is clear that $\mathbf{P}((0,0)\leftrightarrow (1,0))\ge 1/2$.

To prove \eqref{moment}, let $D_{\mathbf{e}_1}$ be the least $k \geq 1$ such that $(0,0)$ is connected to  $\mathbf{e}_1$ in $B_{2^k}(0)$. We define $D_{\mathbf{e}_1}=\infty$ if there is no such $k$.
The basic observation leading to \eqref{moment} is the following. Put $\mathbf{e} = \{0, \mathbf{e}_1\}$.
\begin{lma}\label{lem: taco_head}
If $D_{\mathbf{e}_1}=k$, then the dual edge $\mathbf{e}^*$ has two disjoint closed dual arms to $\partial B_{2^{k-1}}^*$, and each of $0$ and $\mathbf{e}_1$ has an open arm to $\partial B_{2^{k-1}}$.
\begin{proof}
Since $0$ and $\mathbf{e}_1$ are not connected in $B_{2^{k-1}}(0)$, there is a dual path separating the two edges. This path must include the dual edge $\mathbf{e}^*$. On the other hand, $0$ and $\mathbf{e}_1$ are connected in $B_{2^k}$, but not $B_{2^{k-1}}$ so there is an open path with endpoints $0$ and $\mathbf{e}_1$ which contains an edge in $B_{2^k}\setminus B_{2^{k-1}}$.
\end{proof}
\end{lma}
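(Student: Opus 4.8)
The plan is to read off all three arm statements from easy surgery on lattice paths together with one standard appeal to planar duality. Fix a configuration with $D_{\mathbf{e}_1}=k$ and set $m=2^{k-1}$; I will take $k\ge 2$, since the smaller values of $k$ play no role in \eqref{moment}, and then $0$, $\mathbf{e}_1$, and the edge $\mathbf{e}=\{0,\mathbf{e}_1\}$ all lie inside $B_m(0)$. By the definition of $D_{\mathbf{e}_1}$ there is an open path from $0$ to $\mathbf{e}_1$ inside $B_{2^k}(0)$, but none inside $B_m(0)$.

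First I would produce the two open arms. Choose a self-avoiding open path $\gamma$ realizing $0\leftrightarrow\mathbf{e}_1$ in $B_{2^k}(0)$. Since this connection fails inside $B_m(0)$, $\gamma$ must visit some vertex outside $B_m(0)$; because $|\cdot|_\infty$ changes by at most one along a lattice step while both endpoints of $\gamma$ satisfy $|\cdot|_\infty\le 1<m$, the path meets $\partial B_m(0)$ once on the portion leaving $0$ and once on the portion leaving $\mathbf{e}_1$, and self-avoidance forces these two boundary vertices to be distinct. Truncating $\gamma$ at the first such visit from each end gives an open arm from $0$ to $\partial B_m(0)$ and an open arm from $\mathbf{e}_1$ to $\partial B_m(0)$, each contained in $B_m(0)$; they are vertex-disjoint, since a common vertex would connect $0$ to $\mathbf{e}_1$ inside $B_m(0)$. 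This already proves the ``open arm'' part of the lemma.

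Next I would observe that $\mathbf{e}$ must be closed, for otherwise $\mathbf{e}$ by itself joins $0$ to $\mathbf{e}_1$ inside $B_m(0)$; hence $\mathbf{e}^*$ is a closed dual edge. Since $0$ and $\mathbf{e}_1$ are not joined by open edges inside the box $B_m(0)$, planar duality (via the edge-boundary of the open cluster of $0$ in $B_m(0)$) yields a self-avoiding closed dual path $\mathfrak{d}$ in $B_m(0)^*$ separating $0$ from $\mathbf{e}_1$: $\mathfrak{d}$ is either a dual circuit with exactly one of the two vertices in a bounded region it cuts off, or a dual path with both endpoints on $\partial B_m(0)^*$, and in either case every edge-path inside $B_m(0)$ from $0$ to $\mathbf{e}_1$ contains an edge $e$ with $e^*\in\mathfrak{d}$. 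Applying this to the single-edge path $\mathbf{e}$ forces $\mathbf{e}^*\in\mathfrak{d}$. The two open arms found above exclude the circuit alternative: a dual circuit around $0$ with $\mathbf{e}_1$ outside would confine the open cluster of $0$ to a bounded region inside $B_m(0)$, contradicting the arm from $0$ to $\partial B_m(0)$, and symmetrically for $\mathbf{e}_1$. Therefore $\mathfrak{d}$ runs from $\partial B_m(0)^*$ to $\partial B_m(0)^*$ and passes through $\mathbf{e}^*$; since the endpoints of $\mathbf{e}^*$ sit next to the origin, far from $\partial B_m(0)^*$, the two pieces of $\mathfrak{d}$ obtained by leaving $\mathbf{e}^*$ in its two directions are two closed dual arms from $\mathbf{e}^*$ to $\partial B_m(0)^*$, and they are vertex-disjoint because $\mathfrak{d}$ is self-avoiding. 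This gives the remaining assertion.

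The only step that calls for genuine care is the duality input in the previous paragraph: one must extract a \emph{single, self-avoiding} separating dual path, check that it necessarily contains $\mathbf{e}^*$, and --- the substantive point --- show that it terminates on $\partial B_m(0)^*$ rather than closing into a circuit. That last point is exactly where one uses that the $B_{2^k}(0)$-connection is forced out of $B_m(0)$, hence that both $0$ and $\mathbf{e}_1$ possess open arms to $\partial B_m(0)$; everything else is routine bookkeeping. (For the use in \eqref{moment}: the lemma exhibits $\{D_{\mathbf{e}_1}=k\}$ as a subset of an alternating four-arm configuration for $\mathbf{e}$ to distance $\asymp 2^{k-1}$, since $\mathfrak{d}$ separates the two open arms, which is the inclusion yielding one half of the comparison of $\mathbf{P}(D_{\mathbf{e}_1}=k)$ with $\pi_4(2^{k-1})$.)
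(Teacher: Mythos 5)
Your proof is correct and follows essentially the same strategy as the paper's: extract the two open arms from the forced excursion of any $B_{2^k}$-connecting path outside $B_{2^{k-1}}$, and extract the two closed dual arms from a separating dual path that necessarily passes through $\mathbf{e}^*$. You fill in two points the paper leaves implicit --- that $\mathbf{e}$ itself must be closed, and that the separating dual path must run between two points of $\partial B_{2^{k-1}}^*$ rather than close into a circuit (since a separating circuit would cut off one of the open arms) --- and your remark restricting to $k\ge 2$ is harmless for the application in \eqref{moment}.
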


As a corollary, we obtain
\begin{cor}
\begin{equation}
\mathbf{P}(D_{\mathbf{e}_1}=k)\asymp \pi_4(2^{k-1}),\label{eqn: Debd}
\end{equation}
where $\pi_4(\ell)$ is the probability that there are four alternating arms from $\mathbf{e}$ to $\partial B_\ell$.
\begin{proof}
The upper bound follows directly from Lemma~\ref{lem: taco_head}.

For the lower bound, suppose that $\mathbf{e}^*$ has two disjoint closed arms to $\partial B_{2^{k-1}}$, 0 has one open arm to the left side of $\partial B_{2^{k-1}}$, and $\mathbf{e}_1$ has one open arm to the right side of $\partial B_{2^{k-1}}$. Denote this event by $E_k$. By \cite[Theorem 11]{nolin}, we know that
\[\mathbf{P}(E_k)\asymp \pi_4(2^{k-1}).\]
On $E_k\cap \{(0,\mathbf{e}_1) \text{ is closed}\}$, we have $D_{\mathbf{e}_1}\ge k$. On $E_k$, we can use the generalized FKG lemma \cite[Lemma 13]{nolin} and a RSW construction as in Figure \ref{construction} to ensure that $\{0\leftrightarrow_{B_{2^k}} \mathbf{e}_1\}$ occurs:
\begin{equation}\label{eq: taco_bell_inequality}
\mathbf{P}(0\leftrightarrow_{B_{2^k}} \mathbf{e}_1, E_k)\ge C\mathbf{P}(E_k).
\end{equation}
Since $D_{\mathbf{e}_1}\le k$ on $\{0\leftrightarrow_{B_{2^k}} \mathbf{e}_1\}$, we have
\[\mathbf{P}(D_{\mathbf{e}_1}=k)\ge C\pi_4(2^{k-1}).\]

\end{proof}
\end{cor}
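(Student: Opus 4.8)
\textbf{Proof plan for the corollary $\mathbf{P}(D_{\mathbf{e}_1}=k)\asymp \pi_4(2^{k-1})$.}

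The plan is to prove the two bounds separately, with the upper bound being an immediate consequence of Lemma~\ref{lem: taco_head} and the lower bound requiring a short Russo--Seymour--Welsh construction.

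For the upper bound, I would simply observe that Lemma~\ref{lem: taco_head} says that $\{D_{\mathbf{e}_1}=k\}$ is contained in the event that $\mathbf{e}^*$ has two disjoint closed dual arms to $\partial B_{2^{k-1}}^*$ while each of $0$ and $\mathbf{e}_1$ has an open arm to $\partial B_{2^{k-1}}$. Since $0$ and $\mathbf{e}_1$ are the two endpoints of the edge $\mathbf{e}$, the two open arms emanate from the two distinct vertices of $\mathbf{e}$ and are automatically disjoint from each other and from the two closed dual arms, which emanate from $\mathbf{e}^*$. This is precisely a configuration of four alternating arms from $\mathbf{e}$ to distance $2^{k-1}$ (after peeling off the single edge $\mathbf{e}$ and invoking quasimultiplicativity \cite[Proposition~12.2]{nolin} to absorb the constant-scale cost), so $\mathbf{P}(D_{\mathbf{e}_1}=k)\le C\pi_4(2^{k-1})$.

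For the lower bound, I would introduce the event $E_k$ that $\mathbf{e}^*$ has two disjoint closed dual arms to $\partial B_{2^{k-1}}$, that $0$ has one open arm to the left side of $\partial B_{2^{k-1}}$, and that $\mathbf{e}_1$ has one open arm to the right side of $\partial B_{2^{k-1}}$. This is a four-arm event with prescribed landing sides, so by \cite[Theorem~11]{nolin} it satisfies $\mathbf{P}(E_k)\asymp \pi_4(2^{k-1})$. On $E_k$ together with the event that the edge $(0,\mathbf{e}_1)$ is closed, the two edges $0,\mathbf{e}_1$ are separated by the dual path through $\mathbf{e}^*$ inside $B_{2^{k-1}}$, so $D_{\mathbf{e}_1}\ge k$. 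The point is then to use the generalized FKG inequality \cite[Lemma~13]{nolin} — which allows one to condition on the decreasing (closed-arm) and increasing (open-arm) parts of $E_k$ separately — together with an RSW construction as in Figure~\ref{construction}: one adds open horizontal/vertical crossings and open circuits in the annulus $B_{2^k}\setminus B_{2^{k-1}}$ joining the left open arm from $0$ and the right open arm from $\mathbf{e}_1$, forcing $0\leftrightarrow_{B_{2^k}}\mathbf{e}_1$. Each such crossing costs only constant probability by RSW, so \eqref{eq: taco_bell_inequality} follows: $\mathbf{P}(0\leftrightarrow_{B_{2^k}}\mathbf{e}_1, E_k)\ge C\mathbf{P}(E_k)$. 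Since $D_{\mathbf{e}_1}\le k$ on $\{0\leftrightarrow_{B_{2^k}}\mathbf{e}_1\}$, this yields $\mathbf{P}(D_{\mathbf{e}_1}=k)\ge \mathbf{P}(0\leftrightarrow_{B_{2^k}}\mathbf{e}_1, E_k) - \mathbf{P}(0\leftrightarrow_{B_{2^{k-1}}}\mathbf{e}_1, E_k)$; however it is cleaner to note that on $E_k$ with $(0,\mathbf{e}_1)$ closed one has $D_{\mathbf{e}_1}\ge k$ regardless of whether $0\leftrightarrow_{B_{2^k}}\mathbf{e}_1$, and to combine this directly with the RSW lower bound to get $\mathbf{P}(D_{\mathbf{e}_1}=k)\ge c\,\mathbf{P}(E_k)\ge c'\pi_4(2^{k-1})$.

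The main obstacle is the RSW/FKG step establishing \eqref{eq: taco_bell_inequality}: one must be careful that the open connections added in the annulus $B_{2^k}\setminus B_{2^{k-1}}$ do not interfere with the closed dual arms of $E_k$ (which are confined to $B_{2^{k-1}}$), and that the generalized FKG lemma genuinely applies to the mixed increasing/decreasing event $E_k$ intersected with the extra open crossings. This is standard — it is the same type of argument used for \eqref{eq: RSW_argument} in Corollary~\ref{thm: pt2pt} — but it is where all the care is needed; everything else is bookkeeping with quasimultiplicativity.
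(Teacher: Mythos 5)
Your proposal is correct and follows the paper's proof essentially verbatim: the upper bound from Lemma~\ref{lem: taco_head}, the lower bound via the same event $E_k$ with prescribed landing sides, \cite[Theorem~11]{nolin}, and the generalized FKG/RSW construction of \eqref{eq: taco_bell_inequality}. The only cosmetic difference is your brief detour through the subtraction $\mathbf{P}(0\leftrightarrow_{B_{2^k}}\mathbf{e}_1, E_k)-\mathbf{P}(0\leftrightarrow_{B_{2^{k-1}}}\mathbf{e}_1, E_k)$ before correctly settling on the cleaner route the paper uses (noting $D_{\mathbf{e}_1}\ge k$ already holds on $E_k\cap\{\mathbf{e}\text{ closed}\}$ and $D_{\mathbf{e}_1}\le k$ on $\{0\leftrightarrow_{B_{2^k}}\mathbf{e}_1\}$), and your mention of quasimultiplicativity in the upper bound, which is unnecessary since Lemma~\ref{lem: taco_head} already gives the four-arm event at distance $2^{k-1}$.
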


\begin{figure}
\centering
\includegraphics[scale = 0.40]{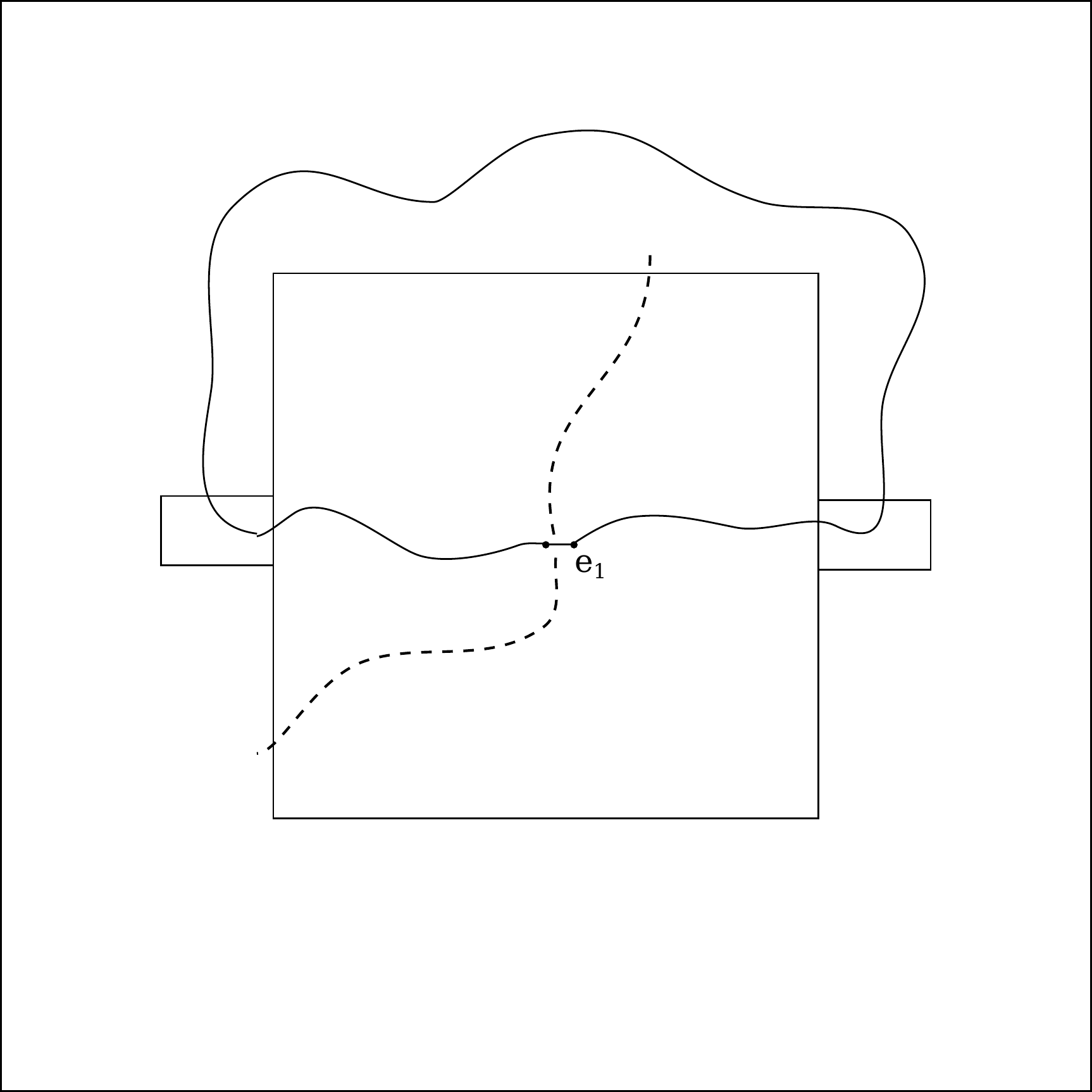}
\caption{Depiction of the RSW construction used in the proof of \eqref{eq: taco_bell_inequality}. The four arms from $\mathbf{e}$ are well-separated in the sense that the open arms touch opposite sides of the box $B_{2^{k-1}}$. This allows one to use the generalized FKG inequality and RSW to extend these arms in $B_{2^k}$ around the interior box to connect and ensure that $D_{\mathbf{e}_1} \leq k$.}
\label{construction}
\end{figure}

We can now give the proof of \eqref{moment}. 

\begin{proof}[Proof of Proposition \ref{prop: nomoment} ] On $\{D_{\mathbf{e}_1}=k\}$, we have $\mathrm{dist}_{chem}(0,\mathbf{e}_1)\ge 2^{k-1}$, so
\begin{align*}
\mathbf{E}[\mathrm{dist}_{chem}(0,\mathbf{e}_1)^2; D_{\mathbf{e}_1}<\infty] &\ge \sum_{k=1}^\infty 2^{2(k-1)}\mathbf{P}(D_{\mathbf{e}_1}=k)\\
&\ge C \sum_{k=1}^\infty 2^{2k}\pi_4(2^k).
\end{align*}
The four arm probability $\pi_4(2^k)$ is bounded below by the five-arm probability, for which we have a ``universal'' asymptotic \cite[Theorem 24, 2.]{nolin}:
\[\pi_4(2^k)\ge \pi_5(2^k)\asymp  2^{-2k},\]
from which it is manifest that the sum is divergent.
\end{proof}

\section{Quantitative bounds on $S_n$}
\label{sec: quantitative}
In this final section, we derive Theorem \ref{thm: quantitative}. Unlike in the previous sections, we will rely on the framework in our paper \cite{DHS15}. 

We begin by summarizing the strategy introduced there and the relevant facts we need for our proof, referring to \cite{DHS15} for details. Then, in Section \ref{sec: quantest}, we show how a modification of the strategy of \cite{DHS15} leads to the improved estimate \eqref{eqn: ELn}. Sections \ref{sec: newevent} and \ref{sec: Ek} contain the main new idea for the approach we take here: a new definition of events which forces the existence of a shortcut around a portion of the lowest crossing. The main advantage of this definition is that the probability of the events is much higher than those in \cite{DHS15} --- this probability is estimated in Sections \ref{sec: neweventprob} and \ref{sec: neweventprob2}. 
The proof of Corollary~\ref{cor: quantitative} follows from Markov's inequality and the right choice of parameters in an estimate from \cite{DHS15} on the lower tail of the distribution of $L_n$. This is explained in the final Section \ref{sec: lowertail}.

We first recall some notation. On $H_n$ (defined in \eqref{eqn: Hn}), any self-avoiding path $\gamma$ with one endpoint on the left side $\{-n\}\times [-n,n]$ of $B_n(0)$ and the other on the right side is a Jordan arc separating the top side from the bottom side. The connected component of $[-n,n]^2 \setminus \gamma$ connected to the bottom side is denoted $\mathcal{B}(\gamma)$. The lowest crossing $l_n$ of $B_n(0)$ is the horizontal crossing $\gamma$ such that the region $\mathcal{B}(\gamma)$ is minimal. We denote this region by $\mathcal{B}(l_n)$.

The following proposition shows that $\pi_3(n)\ge n^{-1+\alpha_3}$ for some $\alpha_3>0$. 
\begin{prop}{\cite[Lemma 4]{DHS15}}\label{prop: boss}
For some $C,C'>0$ and $\alpha_3 \in (0,1)$, 
\[C (m/n)^{1+\alpha_3} \leq (n/m)^2 \pi_3(m,n) \text{ for all } 1 \leq m \leq n,\] or
\begin{equation}\label{eqn: aizenmanslegacy}
\pi_3(m,n) \geq C' (n/m)^{\alpha_3-1} \text{ for all } 1 \leq m \leq n.
\end{equation}
Here, $\pi_3(m,n)$ is the probability that there are two disjoint open paths connecting $B(m)$ to $\partial B(n)$ and a closed dual path connecting $B(m)^*$ to $\partial B(n)^*$.
\end{prop}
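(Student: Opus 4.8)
The plan is to prove the second displayed inequality, $\pi_3(m,n)\ge C'(m/n)^{1-\alpha_3}$ for all $1\le m\le n$ and some $\alpha_3\in(0,1)$; the first one is then immediate after multiplying through by $(n/m)^2$ and renaming the constant. By quasimultiplicativity of the three‑arm event \cite{nolin} one has $\pi_3(m,n)\asymp\prod_j\pi_3(\rho^jm,\rho^{j+1}m)$ over the scales $m\le\rho^jm\le n$, for any fixed ratio $\rho$, so feeding this back reduces the claim to a single‑scale statement: it suffices to produce one large ratio $\rho$ and a constant $K_0$, depending only on the quasimultiplicativity constant, with $\rho\,\pi_3(m,\rho m)\ge K_0$ for every $m\ge 1$. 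In other words, what the proposition asserts is that the polychromatic three‑arm exponent in the plane is strictly smaller than $1$, with a uniform (non‑asymptotic) rate.

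It is worth separating out the easy part. The weaker bound ``the exponent is at most $2$'' --- which is all that the proof of Lemma~\ref{lem: artem}, the only place the proposition is invoked in this excerpt, actually needs, since there one only uses $\beta<2$ --- follows at once by discarding two arms: $\pi_3(m,n)\ge\pi_5(m,n)$, and the five‑arm probability satisfies $\pi_5(m,n)\asymp(m/n)^2$ by the universal value $2$ of the five‑arm exponent \cite{nolin}. Pushing the exponent strictly below $1$ is the genuine content. Naive Russo--Seymour--Welsh constructions that route the two open arms and the closed arm through disjoint sectors of the annulus $B_n\setminus B_m$ give only $\pi_3(m,n)\ge c(m/n)^{C_0}$ with $C_0$ uncontrolled from above --- and in general larger than $1$, since confining two open arms to a wedge is already too costly --- so one cannot avoid a non‑elementary input.

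The route I would take for $C_0<1$ is the one signalled by the label of \eqref{eqn: aizenmanslegacy}. A duality and arm‑separation argument identifies $\pi_3(m,n)$, up to multiplicative constants, with the probability that the annulus $B_n\setminus B_m$ is traversed by two disjoint open clusters: an interface between two radial open crossings carries the required closed dual arm, and conversely two open arms together with a closed arm can, after standard arm separation, be completed to two disjoint open crossings of the annulus. For this probability one then invokes Aizenman's analysis of the number of disjoint spanning clusters, which supplies a polynomial lower bound $\ge c(m/n)^{1-\alpha_3}$ for some $\alpha_3>0$. (Equivalently, this is the statement that the lowest crossing of $B_n$ is, with high probability, longer than $c\,n^{1+\alpha_3}$ --- it is much longer than a straight line --- which together with $\mathbf{E}[\#l_n\mid H_n]\asymp n^2\pi_3(n)$ returns $\pi_3(n)\ge c\,n^{-1+\alpha_3}$.)

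The main obstacle is exactly this last step. The strict inequality $C_0<1$ is where the structural input --- Aizenman's count of spanning clusters, equivalently the supra‑linear length of the lowest crossing --- must enter, because Russo--Seymour--Welsh together with quasimultiplicativity alone cannot exclude an exponent $\ge 1$. Everything else is routine: passing between the two forms of the inequality, absorbing the finitely many small scales into the constant, and transferring between $\pi_3(n)$ and $\pi_3(m,n)$ by quasimultiplicativity.
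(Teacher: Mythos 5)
The paper offers no proof for this proposition; it is a direct quotation of \cite[Lemma 4]{DHS15}. Your parenthetical remark is in fact the argument used there, and it is sound: every edge on the lowest crossing $l_n$ is a three-arm edge, so by arm gluing and quasimultiplicativity $\mathbf{E}[\#l_n \mid H_n] \leq C n^2\pi_3(n)$, while a multiscale tortuosity argument (Aizenman--Burchard, Kesten--Zhang) gives $\#l_n \geq n^{1+\alpha}$ with high probability for some $\alpha>0$. The latter is proved directly from RSW by forcing the crossing to wiggle at every dyadic scale, so no arm estimates are used and there is no circularity. Combining yields $\pi_3(n) \geq c\,n^{-1+\alpha}$, and quasimultiplicativity transfers this to $\pi_3(m,n)$.

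The route you give \emph{before} the parenthesis, though, does not work as stated: ``the annulus is traversed by two disjoint open clusters'' is a \emph{four}-arm event, not a three-arm one. If two disjoint open clusters each span $B_n\setminus B_m$, then each of the two components of the annulus lying between them contains a closed dual crossing, producing four alternating arms; that probability is $\asymp \pi_4(m,n)$, not $\asymp\pi_3(m,n)$. Your ``conversely'' step quietly introduces a second closed arm when it asks for two disjoint spanning \emph{clusters} rather than merely two disjoint crossings. Correspondingly, Aizenman's analysis of the \emph{number} of incipient spanning clusters is not the input you want here; the relevant input is the Aizenman--Burchard length/dimension lower bound for a single crossing, which is exactly what your parenthetical invokes.

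Two smaller corrections. The bound $\pi_3\geq\pi_5\asymp(m/n)^2$ gives exponent exactly $2$, not strictly below $2$; Lemma~\ref{lem: artem} needs $\beta<2$ strictly, otherwise one picks up a spurious $\log L$. To get $\beta<2$ cheaply one should combine Reimer's inequality $\pi_5\leq\pi_3\pi_2$ with a polynomial upper bound $\pi_2(m,n)\leq C(m/n)^\epsilon$. Also, the proposition is invoked in several places beyond Lemma~\ref{lem: artem} --- the definition of $\Lambda_n$, Lemma~\ref{lma: ekprime}, and Proposition~\ref{prop: probest2} --- and some of those genuinely use $\alpha_3>0$, i.e.\ exponent strictly below $1$, not merely below $2$.
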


The significance of this lower bound for us is that the typical order of the length $L_n = \# l_n$ of the lowest crossing, $n^2\pi_3(n)$, is more than linear. Thus we can exclude a region around $\partial B_n$ of width $n^{\alpha_3/2}$ from consideration, as it is irrelevant for counting the length. Define:
\begin{equation}
\begin{split}
\Lambda_n &= B_{n-n^{\alpha_3/2}}(0)\\
&= [-n+n^{\alpha_3/2}, n-n^{\alpha_3/2}]^2.
\end{split}
\end{equation}

We denote by $\hat{l}_n$ the intersection of $l_n$ with $\Lambda_n$. The key definition \cite[Definition 5]{DHS15} of our approach is that of a $\epsilon$-shielded detour. An important difference from \cite{DHS15} is that we will choose $\epsilon$ to be $n$-dependent. See Proposition \ref{prop: probest2} below.

\begin{df}[$\epsilon$-shielded detour]\label{def: edetour}
For an edge $e\in \hat{l}_n$, the set $\mathcal{S}(e,\epsilon)$ of \emph{$\epsilon$-shielded detours} around $e$ is defined as the set of self-avoiding paths $P$ with vertex set $w_0,\ldots, w_M$ such that
\begin{enumerate}
\item for $i=1,\ldots M-1$, $w_i\in [-n,n]^2\setminus \mathcal{B}(l_n)$,
\item the edges $\{w_0, w_0 + \mathbf{e}_1\}$, $\{w_0-\mathbf{e}_1,w_0\}$, $\{w_M,w_M+\mathbf{e}_1\}$, and $\{w_M-\mathbf{e}_1,w_M\}$ are in $l_n$ and $w_1 = w_0+\mathbf{e}_2$, $w_{M-1}= w_M+\mathbf{e}_2$.
\item writing $Q$ for the subpath of $l_n$ from $w_0$ to $w_M$ containing $e$, the path $Q\cup P$ is a closed circuit in $B_n(0)$,
\item The points $w_0 + (1/2)(-\mathbf{e}_1+\mathbf{e}_2)$ and $w_M + (1/2)(\mathbf{e}_1 + \mathbf{e}_2)$ are connected by a dual closed self-avoiding path $R$, whose first and last edges are vertical (translates of $\{0,\mathbf{e}_2\}$), lying in $B_n(0)\setminus \mathcal{B}(l_n)$.
\item $\#P\leq \epsilon \# Q$.
\end{enumerate}
\end{df}

Given some fixed deterministic ordering of all finite lattice paths, we define $\pi(e)=\pi(e,\epsilon)$ to be the first element of $\mathcal{S}(e,\epsilon)$ in this ordering. If $\mathcal{S}(e,\epsilon)$ is empty, then we set $\pi(e)=\emptyset$.

The collection of detours $(\pi(e):e\in \hat{l}_n)$ has the following properties. The proofs are found in \cite[Section 6]{DHS15}:
\begin{enumerate}
\item For distinct $e$, $e'\in \hat{l}_n$, $\pi(e)$ and $\pi(e')$ are either equal or vertex-disjoint.
\item If $e\in \hat{l}_n$ and $\pi(e)\neq \emptyset$ with vertices $w_0, \ldots, w_M$ as above, then $w_0,w_M\in l_n$, but $w_i\in [-n,n]^2\setminus\mathcal{B}(l_n)$, for $i=1,\ldots, M-1$.
\item If $e\in \hat{l}_n$, let $\hat{\pi}(e)$ be the segment of $l_n$ from $w_0$ to $w_M$ containing $e$ (the ``detoured portion'' of $l_n$). Then
\[\#\pi(e)\le \epsilon \#\hat{\pi}(e).\]
\end{enumerate}

Our contribution here is the following
\begin{prop} \label{prop: probest2}
Let $0 \le c_2<1/4$ and $\epsilon = 1/(\log n)^{c_2}$.
We have
\begin{equation}\label{eqn: probest}
\mathbf{P}(\pi(e)=\emptyset \mid e\in l_n)=o(1/\log n) \text{ as } n \to \infty
\end{equation}
uniformly in $e\in \Lambda_n$.
\end{prop}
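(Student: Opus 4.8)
The plan is to show that, conditionally on $e \in l_n$, with probability $1 - o(1/\log n)$ one can build an $\epsilon$-shielded detour around $e$ whose length is at most $\epsilon$ times the length of the detoured portion. The strategy is the one from \cite{DHS15}: partition the annular region outside $\mathcal{B}(l_n)$ above $e$ into dyadic scales, and look for a scale at which a ``good'' local event occurs that produces both a short open connection (the detour $P$) linking two nearby vertices of $l_n$ on either side of $e$ \emph{and} a dual closed path $R$ sitting above $P$ which shields the detour (property (4) of Definition \ref{def: edetour}). The key point is to define, at each scale $2^j$, a local event $E_j = E_j(e)$ — constructed via RSW boxes and circuit gluings as in Sections \ref{sec: newevent}--\ref{sec: Ek} — such that: (i) $E_j$ forces the existence of a detour $\pi(e)$ whose detoured portion $\hat\pi(e)$ has length $\asymp 2^j \cdot (\text{number of }l_n\text{-edges per unit length at scale }2^j)$, so that the length constraint $\#P \le \epsilon\,\#\hat\pi(e)$ is automatically satisfied once $2^j$ is at least a suitable power of $1/\epsilon$ (using that $\hat\pi(e)$, being a piece of the lowest crossing localized near scale $2^j$ around $e$, has length growing like $2^j\cdot 2^j\pi_3(2^j)$, which is superlinear by Proposition \ref{prop: boss}); and (ii) the events $E_j$ at different (well-separated) scales are conditionally independent enough that their failure probabilities multiply.

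\textbf{Key steps in order.} First I would recall the conditioning: on $\{e \in l_n\}$ one has, near $e$, the characteristic three-arm picture for the lowest crossing — two open arms along $l_n$ and a closed dual arm below — and above $l_n$ the region $[-n,n]^2 \setminus \mathcal{B}(l_n)$ is available for the detour. I would fix an increasing sequence of scales, say $2^j$ for $j$ ranging over an arithmetic progression with gaps large enough (a fixed multiple of $\log(1/\epsilon) = c_2 \log\log n$) to guarantee conditional independence across scales after the standard separation-of-arms surgery. The number of such scales available inside $\Lambda_n$ (whose inner radius is $n - n^{\alpha_3/2}$, so essentially $\log n$ dyadic scales between a constant and $n$) is therefore of order $\log n / \log\log n \to \infty$. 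Second, I would define the event $E_j$ precisely as in Section \ref{sec: Ek}: in an annulus around $e$ at scale $2^j$, demand a configuration of open and closed (dual) paths and circuits — arranged using RSW and the generalized FKG inequality — whose effect is exactly to realize conditions (1)--(5) of Definition \ref{def: edetour} for some detour $\pi(e)$, with $\#P = O(2^j)$ and $\#\hat\pi(e) \ge c\, 2^j \cdot 2^j \pi_3(2^j)$. Since $\epsilon = (\log n)^{-c_2}$ with $c_2 < 1/4$, and $2^j \pi_3(2^j) \ge c\, 2^{j\alpha_3}$ grows polynomially, the length condition $\#P \le \epsilon \#\hat\pi(e)$ holds as soon as $2^{j\alpha_3} \gg 1/\epsilon = (\log n)^{c_2}$, i.e. for all but $O(\log\log n)$ of the smallest scales; I would simply discard those. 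Third, I would show $\mathbf{P}(E_j \mid e \in l_n) \ge \delta > 0$ uniformly in $j$ and $n$ for the retained scales — this is the content of Sections \ref{sec: neweventprob}--\ref{sec: neweventprob2}, and it is where the new, higher-probability construction beats the one in \cite{DHS15}: the point is that the relevant local event has probability bounded below by a \emph{constant} rather than by a negative power of $\epsilon$. Fourth, using conditional (near-)independence of the $E_j$ over the $\gtrsim \log n/\log\log n$ well-separated retained scales, I would conclude
\[
\mathbf{P}(\pi(e) = \emptyset \mid e \in l_n) \le \mathbf{P}\Big(\bigcap_j E_j^c \;\Big|\; e \in l_n\Big) \le (1-\delta)^{\,c \log n / \log\log n} = o(1/\log n),
\]
since $(1-\delta)^{c\log n/\log\log n} = \exp(-c'\log n/\log\log n)$ decays faster than any fixed power of $1/\log n$.

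\textbf{Main obstacle.} The hard part is Step three combined with making Step four's independence rigorous: one must define $E_j$ at scale $2^j$ so that it (a) genuinely implies a valid $\epsilon$-shielded detour — in particular that the detour $P$ is \emph{shielded} by a dual closed path $R$ lying in $B_n(0) \setminus \mathcal{B}(l_n)$ with vertical end-edges (condition (4)), which is the delicate planarity/topology constraint — while (b) depending only on the configuration in an annulus at scale $2^j$ disjoint from the annuli of other retained scales, \emph{and} (c) having probability bounded below by a constant uniformly in $n$, even after conditioning on $\{e\in l_n\}$ (which is itself a complicated global event, so one must use quasimultiplicativity and arm-separation to localize the conditioning). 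Getting the $\epsilon$-dependence of $\mathbf{P}(E_j)$ to be harmless — ideally no $\epsilon$-dependence at all in the lower bound, the whole $\epsilon$-dependence being absorbed into ``which scales we are allowed to use'' — is the key improvement over \cite{DHS15} and the technical heart of Sections \ref{sec: newevent}--\ref{sec: neweventprob2}; everything else is the standard RSW/FKG/gluing toolbox and the dyadic-scale counting already used for Theorem \ref{thm: radial}.
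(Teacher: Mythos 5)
Your overall architecture matches the paper's: reduce the conditioning on $\{e\in \hat{l}_n\}$ to conditioning on a three-arm event via \cite[Proposition 19]{DHS15}, spread independent ``shortcut events'' over $\sim \log n / \log\log n$ disjoint dyadic annuli, and multiply failure probabilities using the circuit-separation event $C_N$ of Lemma~\ref{lma: circlemma} together with an analogue of \cite[Proposition 22]{DHS15}. The gap is in the central estimate $\mathbf{P}(E_j\mid e\in l_n)\ge\delta>0$; this is not what the paper proves, and it cannot be correct in the single-scale framework you set up.

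You propose a local event $E_j$ living in a single annulus of scale $2^j$ which forces a detour $P$ with $\#P=O(2^j)$ while $\#\hat\pi(e)\asymp 2^{2j}\pi_3(2^j)$. But $P$ is itself an open path spanning a half-annulus at scale $2^j$, and any such arc that one can produce with probability bounded away from zero has length of order $2^{2j}\pi_3(2^j)$, by the same three-arm characterization that controls $\#\hat\pi(e)$. Demanding $\#P=O(2^j)$ with constant probability is essentially asserting that a scale-$2^j$ box has a shortest open crossing of length $O(2^j)$ with constant probability, which is far stronger than anything this paper or \cite{DHS15} establishes. At a single scale $\#P$ and $\#\hat\pi(e)$ are of the same order, and the ratio never drops below $\epsilon$.

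This is precisely why the paper's event $E_k$ (Section~\ref{sec: Ek}) lives on two scales: the detour $P=\ell_k$ is at scale $2^k$ with $\#P\lesssim 2^{2k}\pi_3(2^k)$, while the detoured portion of the lowest crossing is forced to dip down to scale $2^K$ with $2^K\asymp 2^k/\epsilon$, giving $\#\hat\pi(e)\gtrsim 2^{2K}\pi_3(2^K)$ by a second-moment argument (Lemma~\ref{lma: volume}). Their ratio is then $\lesssim \epsilon^2/\pi_3(2^k,2^K)\lesssim\epsilon^{1+\alpha_3}<\epsilon$. But forcing the crossing to reach scale $2^K$ and return is a six-arm event across an annulus of aspect ratio $1/\epsilon$, so (Proposition~\ref{prop: probest})
\[
\mathbf{P}(E_k'\mid A_3(N))\ \ge\ C\,\pi_{3,HP}(1/\epsilon)^2\ \asymp\ \epsilon^4,
\]
not a constant. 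This $\epsilon^4=(\log n)^{-4c_2}$ is what feeds into $a_{n,N}\gtrsim(\log n)^{-1+4\delta}$ and yields the final exponent $\asymp(\log n)^{1-4c_2}/\log\log n$, which diverges only when $c_2<1/4$. Your constant lower bound would prove the proposition for arbitrary $c_2$, a substantial strengthening the paper does not claim; that mismatch is the symptom of the underlying error in your estimate of $\#P$. (The genuine improvement over \cite{DHS15}, which the paper alludes to, is a smaller \emph{power} of $\epsilon$ in the lower bound on $\mathbf{P}(E_k')$, not the elimination of the $\epsilon$-dependence.)
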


For the remainder of the section, we identify paths with their edge sets. Given detour paths $\pi(e)$, and corresponding detoured subpaths $\hat{\pi}(e)$, we construct crossing $\sigma_n$ which is shorter than $l_n$. We choose a subcollection $\Pi$ of $\{\pi(e):e\in \hat{l}_n\}$ that is maximal in the sense that for any $\pi(e),\pi(e')\in \Pi$, $e\neq e'$, the paths $\hat{\pi}(e)$ and $\hat{\pi}(e')$ are vertex disjoint, and the total length of the detoured paths $\sum_{\pi\in \Pi} \#\hat{\pi}$ is maximal. We also write
\[\hat{\Pi} =\{\hat{\pi}: \pi \in \Pi\}.\] 
The crossing $\sigma_n$ is the path with edge set equal to the union of $\Pi$ and the edges in $l_n$ that are not in $\hat{\Pi}$.

The following two lemmas from \cite[Section 3.2]{DHS15} are used in the next subsection to compare $\sigma_n$ to $l_n$:
\begin{lma}
On $H_n$, $\sigma_n$ is an open horizontal crossing of $[-n,n]^2$.
\end{lma}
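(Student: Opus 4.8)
The plan is to read the statement off directly from the combinatorial properties (1)--(3) of the collection $(\pi(e):e\in\hat l_n)$ recalled above, together with the definition of $\sigma_n$. The only facts used are: each $\pi(e)$ is a path of \emph{open} edges; $\pi(e)$ meets $l_n$ exactly in the two endpoints $w_0,w_M$ of the detoured portion $\hat\pi(e)$; and the detours and detoured portions occurring in $\Pi$ are pairwise vertex-disjoint.

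First I would note that every edge of $\sigma_n$ is open. By construction $\sigma_n$ has edge set $\bigl(E(l_n)\setminus\bigcup_{\pi\in\Pi}E(\hat\pi)\bigr)\cup\bigcup_{\pi\in\Pi}E(\pi)$. Edges of $l_n$ are open because on $H_n$ the lowest crossing $l_n$ is an open horizontal crossing; edges of each $\pi\in\Pi$ are open by Definition~\ref{def: edetour} (in condition (3) the word ``closed'' refers to $Q\cup P$ being a topologically closed loop, while $Q\subseteq l_n$ and $P$ consist of open edges). Hence $\sigma_n$ is open.

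Next I would verify that $\sigma_n$ is a self-avoiding path joining the left and right sides of $[-n,n]^2$. If $\Pi=\emptyset$ then $\sigma_n=l_n$ and there is nothing to prove, so assume not. Since $\{\hat\pi:\pi\in\Pi\}$ are pairwise vertex-disjoint sub-arcs of the single self-avoiding path $l_n$, they appear along $l_n$ as disjoint arcs, which I list in the order $l_n$ visits them, $\hat\pi_1,\ldots,\hat\pi_r$, with endpoints $w_0^{(j)},w_M^{(j)}$. By condition (2) of Definition~\ref{def: edetour}, each of $w_0^{(j)},w_M^{(j)}$ carries exactly two horizontal $l_n$-edges, precisely one of which lies on $\hat\pi_j$; deleting $E(\hat\pi_j)$ thus leaves $w_0^{(j)}$ and $w_M^{(j)}$ with a single incident $l_n$-edge and removes every interior vertex of $\hat\pi_j$ from the edge set (each such vertex had both its $l_n$-edges on $\hat\pi_j$, and, by vertex-disjointness, lies on no other $\hat\pi_k$ nor, by property (2), on any $\pi_k$). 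Re-inserting $\pi_j$, whose endpoints are exactly $w_0^{(j)},w_M^{(j)}$, whose first and last edges are the vertical edges at $w_0^{(j)},w_M^{(j)}$ (distinct from the surviving horizontal $l_n$-edges), and whose interior vertices lie off $l_n$, splices the two pieces of $l_n$ flanking $\hat\pi_j$ back together with degree exactly $2$ at $w_0^{(j)},w_M^{(j)}$. Because the $\pi_j$ are pairwise vertex-disjoint (property (1)) and each $\pi_j$ meets $l_n$ only at its endpoints, carrying out all $r$ splicings at once turns $l_n$ into one self-avoiding path with the same endpoints; since the endpoints of $l_n$ lie on $\{-n\}\times[-n,n]$ and $\{n\}\times[-n,n]$ and are untouched (they have $l_n$-degree $1$, hence are not among the $w_0^{(j)},w_M^{(j)}$), this path — namely $\sigma_n$ — is an open horizontal crossing of $[-n,n]^2$.

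The one genuinely delicate input is the claim that a detour $\pi$ meets $l_n$ only in $w_0$ and $w_M$: its interior vertices $w_1,\ldots,w_{M-1}$, which by property (2) lie outside $\mathcal{B}(l_n)$, must in addition avoid $l_n$ altogether. This relies on $l_n$ being the \emph{lowest} crossing together with conditions (3)--(4) of Definition~\ref{def: edetour} (so that $Q\cup P$ is a \emph{simple} closed circuit shielded above by the dual closed path $R$), and I would invoke the corresponding statements from \cite[Section~6]{DHS15} rather than reprove them. With those in hand, the argument above is pure bookkeeping; the only real care needed is keeping the finitely many disjoint splicings straight.
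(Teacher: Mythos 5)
Your argument is correct. Note, though, that this paper does not actually prove the lemma --- it is quoted verbatim from \cite[Section 3.2]{DHS15}, so there is no in-paper proof to compare against; what you have written is a reconstruction of the argument that lives in the earlier paper. A few remarks on that reconstruction: (i) you correctly read condition (3) of Definition~\ref{def: edetour} as ``topologically closed loop'' rather than ``circuit of percolation-closed edges,'' which is the intended meaning (the displayed definition as printed in this paper also neglects to say explicitly that $P$ is a path of \emph{open} edges, but that is clearly the intent, as your openness check requires); (ii) your splicing argument is the right one, and the degree count at $w_0^{(j)},w_M^{(j)}$ --- two horizontal $l_n$-edges, one surviving, plus one incoming vertical detour edge --- together with pairwise vertex-disjointness of the $\pi_j$ and of the $\hat\pi_j$ does give a self-avoiding path; (iii) you are right to flag as the one nontrivial input the fact that interior vertices of $\pi$ miss $l_n$ entirely (property (2) only places them outside $\mathcal{B}(l_n)$, which is an open region not containing $l_n$, so it does not by itself forbid $w_i\in l_n$), and citing \cite[Section~6]{DHS15} for that is exactly what the present paper does. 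So the proposal is sound and, as far as one can tell, matches the intended argument.
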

\begin{lma}\label{lma: logic}
On $H_n$, if $e\in \hat{l}_n\setminus \hat{\Pi}$, then $\pi(e)=\emptyset$.
\end{lma}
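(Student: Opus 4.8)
The plan is a max-weight packing exchange argument using only the three listed properties of the family $\{\pi(e):e\in\hat{l}_n\}$ together with one planarity fact. Assume, on $H_n$, that some $e\in\hat{l}_n\setminus\hat{\Pi}$ has $\pi(e)\neq\emptyset$; I will derive a contradiction with the maximality in the definition of $\Pi$. First observe $\pi(e)\notin\Pi$: otherwise $\hat{\pi}(e)\in\hat{\Pi}$ and, since $e\in\hat{\pi}(e)$, this would force $e\in\hat{\Pi}$. The goal is then to produce a subcollection $\Pi'\subseteq\{\pi(f):f\in\hat{l}_n\}$ whose detoured portions are still pairwise vertex-disjoint but with $\sum_{\pi\in\Pi'}\#\hat{\pi}>\sum_{\pi\in\Pi}\#\hat{\pi}$.

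The one geometric input is a laminarity statement: for $\pi(f)\neq\pi(g)$ the subarcs $\hat{\pi}(f)$ and $\hat{\pi}(g)$ of $l_n$ are either vertex-disjoint or one contains the other. This I would deduce from the first listed property (the detours $\pi(f),\pi(g)$ are equal or vertex-disjoint) via a Jordan-curve argument: by the definition of a detour, $\pi(f)$ and $\hat{\pi}(f)$ meet only at the common endpoints $w_0,w_M$ and so $\pi(f)\cup\hat{\pi}(f)$ is a simple closed curve bounding a region of $[-n,n]^2$, while $\pi(g)$ is a simple path lying in $[-n,n]^2\setminus\mathcal{B}(l_n)$ that meets $l_n$ only at its own two endpoints. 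If those two endpoints interleaved along $l_n$ with $w_0,w_M$, one would lie in the bounded complementary region of $\pi(f)\cup\hat{\pi}(f)$ and the other outside it, so $\pi(g)$ would have to cross $\pi(f)\cup\hat{\pi}(f)$; this is impossible since $\pi(g)$ is disjoint from $\pi(f)$ and off $l_n$ except at its endpoints. Non-interleaving of the endpoint pairs is exactly the laminar alternative.

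Granting laminarity, set $\Pi'=\bigl(\Pi\setminus\{\pi\in\Pi:\hat{\pi}\subsetneq\hat{\pi}(e)\}\bigr)\cup\{\pi(e)\}$. For any $\pi\in\Pi$ whose detoured portion meets $\hat{\pi}(e)$, laminarity gives $\hat{\pi}\subseteq\hat{\pi}(e)$ or $\hat{\pi}(e)\subseteq\hat{\pi}$; the second option is impossible because it would put $e\in\hat{\pi}\in\hat{\Pi}$, so every such $\hat{\pi}$ is a proper subarc of $\hat{\pi}(e)$ and is removed. Therefore every detoured portion retained from $\Pi$ is vertex-disjoint from $\hat{\pi}(e)$, and $\Pi'$ is an admissible competitor. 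Passing from $\Pi$ to $\Pi'$ discards finitely many pairwise-disjoint proper subarcs of $\hat{\pi}(e)$ and adds $\hat{\pi}(e)$ itself; since none of the discarded subarcs contains the edge $e$ (otherwise $e\in\hat{\Pi}$), their edge counts sum to at most $\#\hat{\pi}(e)-1$, so $\sum_{\pi\in\Pi'}\#\hat{\pi}>\sum_{\pi\in\Pi}\#\hat{\pi}$, contradicting the choice of $\Pi$. Hence $\pi(e)=\emptyset$ for every $e\in\hat{l}_n\setminus\hat{\Pi}$.

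The main obstacle is the laminarity step, i.e.\ making the Jordan-curve argument rigorous inside the lattice region $[-n,n]^2\setminus\mathcal{B}(l_n)$: one must handle carefully the detour edges anchored on $l_n$ near $w_0$ and $w_M$ (the prescribed horizontal edges at $w_0,w_M$ and the first/last vertical steps $w_1=w_0+\mathbf{e}_2$, $w_{M-1}=w_M+\mathbf{e}_2$), as well as degenerate situations where a detour endpoint coincides with an endpoint of $e$ or abuts $\partial B_n(0)$. The rest is routine bookkeeping with the properties already recorded for $(\pi(e):e\in\hat{l}_n)$.
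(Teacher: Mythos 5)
The paper does not prove this lemma in the text --- it cites \cite[Section~6]{DHS15} --- so there is no in-text argument to compare against, and I assess your reconstruction directly. The laminarity-plus-exchange plan is the natural route, and the skeleton is right: observe $\pi(e)\notin\Pi$, show any $\hat{\pi}\in\hat{\Pi}$ meeting $\hat{\pi}(e)$ must be a proper subarc of it, and swap. You correctly flag the laminarity step as the planar-topology hurdle; note that beyond the endpoint bookkeeping you mention, one must also rule out a detour $\pi(g)$ re-touching $l_n$ at an \emph{interior} vertex of $\hat{\pi}(f)$, since Definition~\ref{def: edetour} only requires $w_1,\dots,w_{M-1}\in[-n,n]^2\setminus\mathcal{B}(l_n)$, a set which still contains the vertices of $l_n$. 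This is where the shield $R$ of item~4, a \emph{closed dual} arc that the open $\pi(g)$ can neither cross nor meet, is the intended separating object rather than $\pi(f)\cup\hat{\pi}(f)$ alone.

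There is a second, unflagged gap in the closing count. With the paper's convention that $\#\gamma$ counts \emph{vertices}, discarding $j\ge 2$ pairwise-disjoint proper subarcs of $\hat{\pi}(e)$ whose edge sets avoid $e$ bounds their edge total by $\#\hat{\pi}(e)-2$, but their \emph{vertex} total can still equal $\#\hat{\pi}(e)$ (picture two arcs abutting the two endpoints of $e$), so the strict inequality $\sum_{\Pi'}\#\hat{\pi}>\sum_{\Pi}\#\hat{\pi}$ does not follow from what you wrote. The repair is a second application of property~1: if some discarded $\hat{\pi}_i\subsetneq\hat{\pi}(e)$ contained an endpoint of $\hat{\pi}(e)$, that endpoint would be a vertex of both $\pi_i$ and $\pi(e)$, forcing $\pi_i=\pi(e)$ and hence $\hat{\pi}_i=\hat{\pi}(e)$, a contradiction. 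Thus no discarded arc touches either endpoint of $\hat{\pi}(e)$, their vertex total is at most $\#\hat{\pi}(e)-2$, and the exchange is strictly improving in either counting convention. With these two points filled in, your argument goes through.
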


\subsection{Estimate for $S_n$}\label{sec: quantest}
We show how Proposition \ref{prop: probest2} is used to obtain Theorem \ref{thm: quantitative}. The argument is essentially the same as in \cite[Section 4]{DHS15}.
\begin{proof}[Proof of Theorem \ref{thm: quantitative}]
Recall the definition  of $\sigma_n$ at the end of the previous section. By this definition and properties 1-3 above (below Definition~\ref{def: edetour}), we have
\begin{align}
\# \sigma_n &= \sum_{\pi\in \Pi} \# \pi + \#(l_n \setminus \cup_{\hat \pi \in \hat \Pi} \hat \pi)\nonumber\\
&\le \sum_{\pi\in \Pi} \# \pi + \#\{e\in B_n(0): e\cap( B_n(0)\setminus \Lambda_n)\neq \emptyset\}+ \#(\hat{l}_n\setminus \cup_{\hat \pi \in \hat \Pi} \hat \pi)\\
&\le \sum_{\pi\in \Pi} \# \pi + 30n^{1+\alpha_3/2} +  \#\{e\in \hat{l}_n: \pi(e)=\emptyset\} \label{eqn: 2ndeq}
\end{align}
To pass to the final inequality \eqref{eqn: 2ndeq}, we have used Lemma \ref{lma: logic} for the third term. 

We can now estimate the length $S_n$ of the shortest crossing(s):
\begin{equation}
\label{eqn: expest} 
\begin{split}
\mathbf{E}S_n &\le \mathbf{E}\# \sigma_n\\
&\le 30n^{1+\alpha_3/2}+\mathbf{E}\# \{e\in \hat{l}_n: \pi(e)=\emptyset\} + \epsilon \cdot \mathbf{E}L_n\\
&\le \frac{C}{(\log n)^{c_2}}n^2\pi_3(n) + \mathbf{E}\# \{e\in \hat{l}_n: \pi(e)=\emptyset\}.
\end{split}
\end{equation}

Using \eqref{eqn: probest}, we can estimate the last expectation:
\begin{align*}
\mathbf{E}\# \{e\in \hat{l}_n: \pi(e)=\emptyset\} &= \sum_{e\in \Lambda(n)}\mathbf{P}(\pi(e)=\emptyset\mid e\in \hat{l}_n)\mathbf{P}(e\in \hat{l}_n)\\
&= o\left(\frac{1}{\log n}\right) \cdot  \mathbf{E}L_n\\
&= o\left(\frac{1}{(\log n)^{c_2}}\right)\cdot n^2\pi_3(n).
\end{align*}
Using this bound in \eqref{eqn: expest}, we obtain \eqref{eqn: ELn}.
\end{proof}

\subsection{A new definition of the events $E_k$}\label{sec: newevent}
Let $K=K(\epsilon,k)$ be the least $l\in \mathbb{Z}_+$ such that
\[2^l \ge (1/\epsilon)\cdot 2^k.\]
We define an event $E_k=E_k(\epsilon)$ which implies the existence of a shortcut inside the annulus $B(2^K)\setminus B(2^k)$, and depends only on the status of edges inside that annulus. Like the event $E_k$ in \cite[Section 5]{DHS15}, the precise definition of the events $E_k$ we use here is involved.

We denote by $E_k(e) = E_k(\epsilon,e)$ the event $\tau_{-e_x}E_k(\epsilon)$, that is, the event that $E_k$ occurs in the configuration $(\omega_{e+e_x})_{e\in\mathcal{E}(\mathbb{Z}^2)}$ translated by the coordinates of the lower-left endpoint $e_x$ of the edge $e$. 

The two essential properties of $E_k(e)$ are 
\begin{enumerate}
\item If $E_k(\epsilon,e)$ occurs for some $k \leq n^{\alpha_3/4}$ and $e$ lies on $\hat l_n$, then there is an $\epsilon$-shielded detour around $e$, in the sense of Definition \ref{def: edetour}. See Lemma \ref{lma: ekprime}. 
\item We have the following lower bound for the probability of $E_k$:
\[\mathbf{P}(E_k\mid A_3(n^{\alpha_3/2}))\ge C\epsilon^4.\]
This is implied by Proposition \ref{prop: probest}.
\end{enumerate}

The next subsection contains an enumeration of the conditions for the event $E_k$ to occur in $B(2^K)\setminus B(2^k)$. The essential features of the construction are as follows:

\begin{enumerate}
\item An open arc (detour) connecting two arms emanating from the 3-arm edge $e$. This arc lies inside a box of sidelength of order $2^k$, and is depicted as the top arc in Figure~\ref{rswf}.
\item A subsegment of the lowest crossing of $B_{n^{\alpha_3/2}}$, of length on the order of $2^{2K}\pi_3(2^K)$. This path is depicted as the pendulous curve in Figure~\ref{rsw2}.
\end{enumerate}
\subsection{Definition of $E_k$}
\label{sec: Ek}

First, we have the following conditions depending on the status of edges inside $[-3\cdot 2^k,3\cdot 2^k]^2$ (see Figure \ref{rswf}). In item 4 (and in item 3, the mirror image), we use the term five-arm point in the following way. The origin (for example) is a five-arm point if it has three disjoint open paths emanating from 0, one taking the edge $\{0, \mathbf{e}_1\}$ first, one taking the edge $\{0, -\mathbf{e}_1\}$ first, and one taking the edge $\{0, \mathbf{e}_2\}$ first, and directed analogously to that in item 4 (or item 3). The two remaining closed dual paths emanate from dual neighbors of 0, one taking the dual edge $\{(-1/2)\mathbf{e}_1+(1/2)\mathbf{e}_2, (-1/2)\mathbf{e}_1 + (3/2)\mathbf{e}_2\}$ first, and the other taking the dual edge $\{(1/2)\mathbf{e}_1-(1/2)\mathbf{e}_2, (1/2)\mathbf{e}_1-(3/2)\mathbf{e}_2\}$ first.

\begin{enumerate}
\item There is a horizontal open crossing of $[2^k,3\cdot 2^k] \times [-\frac{2^k}{3}, \frac{2^k}{3}] $, a horizontal open crossing of $[-\frac{7}{3}\cdot 2^k, -2^k]\times[-\frac{2^k}{3}, \frac{2^k}{3}]$.
\item There is a vertical open crossing of $[-\frac{7}{3}\cdot 2^k, -\frac{5}{3}\cdot 2^k]\times [-3\cdot 2^k, \frac{1}{3}\cdot 2^k]$.

\item There is a five-arm point in the box $[\frac{7}{3}\cdot 2^k, 3\cdot 2^k]\times [-\frac{1}{3}\cdot 2^k, \frac{1}{3}\cdot 2^k]$: in clockwise order: 
\begin{enumerate}
\item an open arm connected to $[\frac{7}{3}\cdot 2^k, \frac{8}{3}\cdot 2^k]\times \{\frac{1}{3}\cdot 2^k\}$, 
\item a closed dual arm connected to $[\frac{8}{3}\cdot 2^k, 3\cdot 2^k]\times \{\frac{1}{3}\cdot 2^k\}$, 
\item an open arm connected to the ``right side'' of the box $\{3\cdot 2^k\}\times [-\frac{1}{3}\cdot 2^k, \frac{1}{3}\cdot 2^k]$, 
\item a closed dual arm connected to $[\frac{7}{3}\cdot 2^k, 3\cdot 2^k]\times \{-\frac{1}{3}\cdot 2^k\}$, 
\item and an open arm connected to $\{\frac{7}{3}\cdot 2^k\}\times [-\frac{1}{3}\cdot 2^k,\frac{1}{3}\cdot 2^k]$.
\end{enumerate}

\item There is a five-arm point in the box $[-3\cdot 2^k, -\frac{8}{3}\cdot 2^k ]\times [-\frac{1}{3}\cdot 2^k, \frac{1}{3}\cdot 2^k]$, in clockwise order:
\begin{enumerate}
\item a closed dual arm connected to $[-3\cdot 2^k, -\frac{8}{3}\cdot 2^k]\times \{\frac{1}{3}\cdot 2^k\}$,
\item an open arm connected to $[-\frac{8}{3}\cdot 2^k, -\frac{7}{3}\cdot 2^k]\times \{\frac{1}{3}\cdot 2^k\}$,
\item an open arm connected to  $[-\frac{8}{3}\cdot 2^k, -\frac{7}{3}\cdot 2^k]\times \{-\frac{1}{3}\cdot 2^k\}$,
\item a closed dual arm connected to $[-3\cdot 2^k, -\frac{8}{3}\cdot 2^k]\times \{-\frac{1}{3}\cdot 2^k\}$,
\item and an open arm connected to the ``left side'' of the box, $\{-3\cdot 2^k\}\times [-\frac{1}{3}\cdot 2^k, \frac{1}{3}\cdot 2^k]$.
\end{enumerate}

\item There is an open vertical crossing of $[-3\cdot 2^k, -\frac{7}{3}\cdot 2^k]\times [-3\cdot 2^k,-\frac{1}{3}\cdot 2^k]$, connected to the open arm emanating from the five-arm point in $[-3\cdot 2^k, -\frac{8}{3}\cdot 2^k ]\times [-\frac{1}{3}\cdot 2^k, \frac{1}{3}\cdot 2^k]$ landing in $[-\frac{8}{3}\cdot 2^k, -\frac{7}{3}\cdot 2^k]\times \{-\frac{1}{3}\cdot 2^k\}$. There is a dual closed vertical crossing of $[-3\cdot 2^k, -\frac{7}{3}\cdot 2^k]\times [-3\cdot 2^k, -\frac{1}{3}\cdot 2^k]$, connected to the closed dual arm in $[-3\cdot 2^k, -\frac{8}{3}\cdot 2^k ]\times [-\frac{1}{3}\cdot 2^k, \frac{1}{3}\cdot 2^k]$ landing in  $[-3\cdot 2^k, -\frac{8}{3}\cdot 2^k]\times \{-\frac{1}{3}\cdot 2^k\}$.

\item There is a closed dual vertical crossing of $[\frac{7}{3}\cdot 2^k, 3\cdot 2^k]\times [-3\cdot 2^k, -\frac{1}{3}\cdot 2^k]$, connected to the dual arm in $[\frac{7}{3}\cdot 2^k, 3\cdot 2^k]\times [-\frac{1}{3}\cdot 2^k, \frac{1}{3}\cdot 2^k]$.

\item There is a closed dual arc (the shield) in the half-annulus $[-3\cdot 2^k, 3\cdot 2^k]^2 \setminus (-\frac{8}{3} \cdot 2^k, \frac{8}{3} \cdot 2^k)^2 \cap \{(x,y) : y \geq -\frac{1}{3} \cdot 2^k\}$ connecting the closed dual paths from the two five-arm points in items 3 and 4.

\item There is an open arc (the detour) in the half-annulus $[-\frac{8}{3} \cdot 2^k, \frac{8}{3} \cdot 2^k]^2 \setminus (-\frac{7}{3} \cdot 2^k, \frac{7}{3} \cdot 2^k)^2 \cap \{(x,y) : y \geq -\frac{1}{3} \cdot 2^k\}$ connecting the open paths from the two five-arm points in items 3 and 4 which end on the line $\{(x,\frac{1}{3} \cdot 2^k) : x \in \mathbb{Z}\}$.
\end{enumerate}

Next, we have the following constructions around the box $[-2^K,2^K]^2$;
see Figure \ref{rsw2}.
\begin{enumerate}
\setcounter{enumi}{8}
\item There is a closed dual arc in $B((0,-2^K/2),2^K/4)\setminus B((0,-2^K/2),2^K/8)$ around 
\[B((0,-2^K/2),2^K/8)),\] joining $[-\frac{2^K}{8},-\frac{2^K}{16}]\times \{-\frac{2^K}{4}\}$ to $[\frac{1}{16}\cdot 2^K, \frac{1}{8}\cdot 2^K]\times\{- \frac{2^K}{4}\}$.
\item There are two disjoint closed paths inside $[-\frac{2^K}{8},\frac{2^K}{8}]\times ((-\infty,0])$: one joining the endpoint of the vertical closed crossing on $[-3\cdot 2^k, -\frac{8}{3}\cdot 2^k]\times \{-3\cdot 2^k\}$ to the endpoint of the closed dual arc in the previous item on $[-\frac{2^K}{8},-\frac{2^K}{16}]\times \{-\frac{2^K}{4}\}$, the second, joining the endpoint of the vertical crossing on $[\frac{7}{3}\cdot 2^k,3\cdot 2^k]\times \{-3\cdot 2^k\}$, to the endpoint of the closed dual arc in the previous item on $[\frac{2^K}{16},\frac{2^K}{8}]\times \{-\frac{2^K}{4}\}$.
\item There is a horizontal open crossing of the square box $[-\frac{1}{8}\cdot 2^K, \frac{1}{8}\cdot 2^K]\times [-\frac{5}{8}\cdot 2^K, -\frac{3}{8}\cdot 2^K]$
\item There are vertical open crossings of $[-\frac{1}{8}\cdot 2^K, -\frac{1}{16}\cdot 2^K]\times [-\frac{5}{8}\cdot 2^K, -\frac{3}{8}\cdot 2^K]$ and $[\frac{1}{16}\cdot 2^K, \frac{1}{8}\cdot 2^K]\times [-\frac{5}{8}\cdot 2^K, -\frac{3}{8}\cdot 2^K]$.

\item There are two disjoint open paths contained in $[-\frac{1}{16}\cdot 2^K,\frac{1}{16}\cdot 2^K]\times [-\frac{1}{4}\cdot 2^K,0] \cup [-\frac{2^K}{8},\frac{2^K}{8}]\times [-\frac{3}{8}\cdot 2^K, -\frac{1}{4}\cdot 2^K]$:
\begin{enumerate} 
\item one joining the endpoint of the  open vertical crossing of $[-\frac{8}{3}\cdot 2^k, -\frac{7}{3}\cdot 2^k]\times [-3\cdot 2^k,-\frac{1}{3}\cdot 2^k]$  to the endpoint of the open crossing of $[-\frac{1}{8}\cdot 2^K, -\frac{1}{16}\cdot 2^K]\times [-\frac{5}{8}\cdot 2^K, -\frac{3}{8}\cdot 2^K]$ on $[-\frac{1}{8}\cdot 2^K, -\frac{1}{16}\cdot 2^K]\times \{-\frac{3}{8}\cdot 2^K\}$, 
\item one joining the endpoint of the open vertical crossing of $[-\frac{7}{3}\cdot 2^k,-\frac{5}{3}\cdot 2^k ]\times [-3\cdot 2^k,-\frac{1}{3}\cdot 2^k]$ to  $[\frac{1}{16}\cdot 2^K, \frac{1}{8}\cdot 2^K]\times \{-\frac{3}{8}\cdot 2^K\}$.
\end{enumerate}
\item There is dual closed vertical crossing of $[-\frac{1}{16}\cdot 2^K, \frac{1}{16}\cdot 2^K]\times [-2^K, -\frac{5}{8}\cdot 2^K]$.
\end{enumerate}

Finally, we finish the description of the event, completing the construction by adding two more macroscopic conditions:
\begin{enumerate}\setcounter{enumi}{14}
\item There is a dual closed circuit with two open defects around the origin in $[-2^K, 2^K]^2\setminus [-\frac{7}{8}\cdot 2^K,\frac{7}{8}\cdot 2^K]^2$. One of the defects is contained in $[-2^K, -\frac{7}{8}\cdot 2^K]\times [-\frac{2^K}{8},\frac{2^K}{8}]$, and the other in $[\frac{7}{8}\cdot 2^K, 2^K]\times [-\frac{2^K}{8},\frac{2^K}{8}]$.
\item There are two disjoint open arms: one from the left side $\{-3\cdot 2^k\}\times [-3\cdot 2^k, 3\cdot 2^k]$ of the box $[-3\cdot 2^k, 3\cdot 2^k]^2$ (touching the open arm from the five-arm point that lands there) to the left side of $[-2^K,2^K]^2$, the other from the right side of $\{3\cdot 2^k\}\times [-3\cdot 2^k, 3\cdot 2^k]$ (touching the corresponding open arm from the five-arm point there) to the right side of $[-2^K,2^K]^2$.
\end{enumerate}

\begin{figure}
\centering
\scalebox{0.6}{\includegraphics[trim={0 0 0 0},clip]{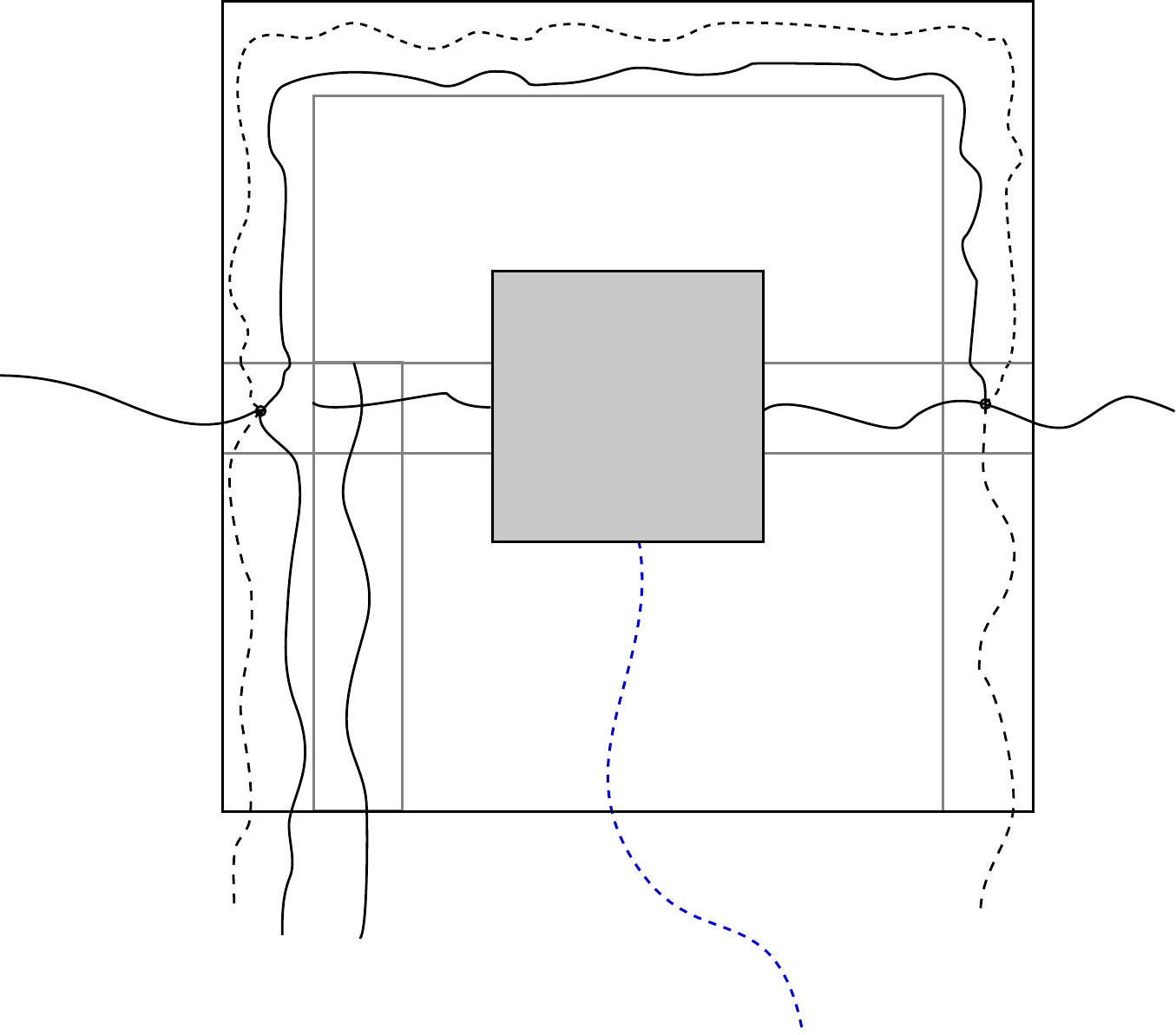}}
\caption{Depiction of the connections in the inner box $[-3 \cdot 2^k, 3 \cdot 2^k]^2$ in the definition of $E_k$. The detour is the solid path connecting the two five-arm points around the top of the box, and the shield is the dotted path just outside it. The detoured path is the solid path starting from the left five-arm point which exits the box in the bottom, goes up to scale $2^K$ in the subsequent constructions, and reenters the box to finish just to the right of where it started. The other dotted connections are closed dual paths that ensure (with the connections on scale $2^K$) that when the center of the shaded box is on the lowest crossing of $B_n$, so are the two five-arm points.}
\label{rswf}
\end{figure}

\begin{figure}
\centering
\scalebox{0.5}{\includegraphics[trim={0 0 0 2cm},clip]{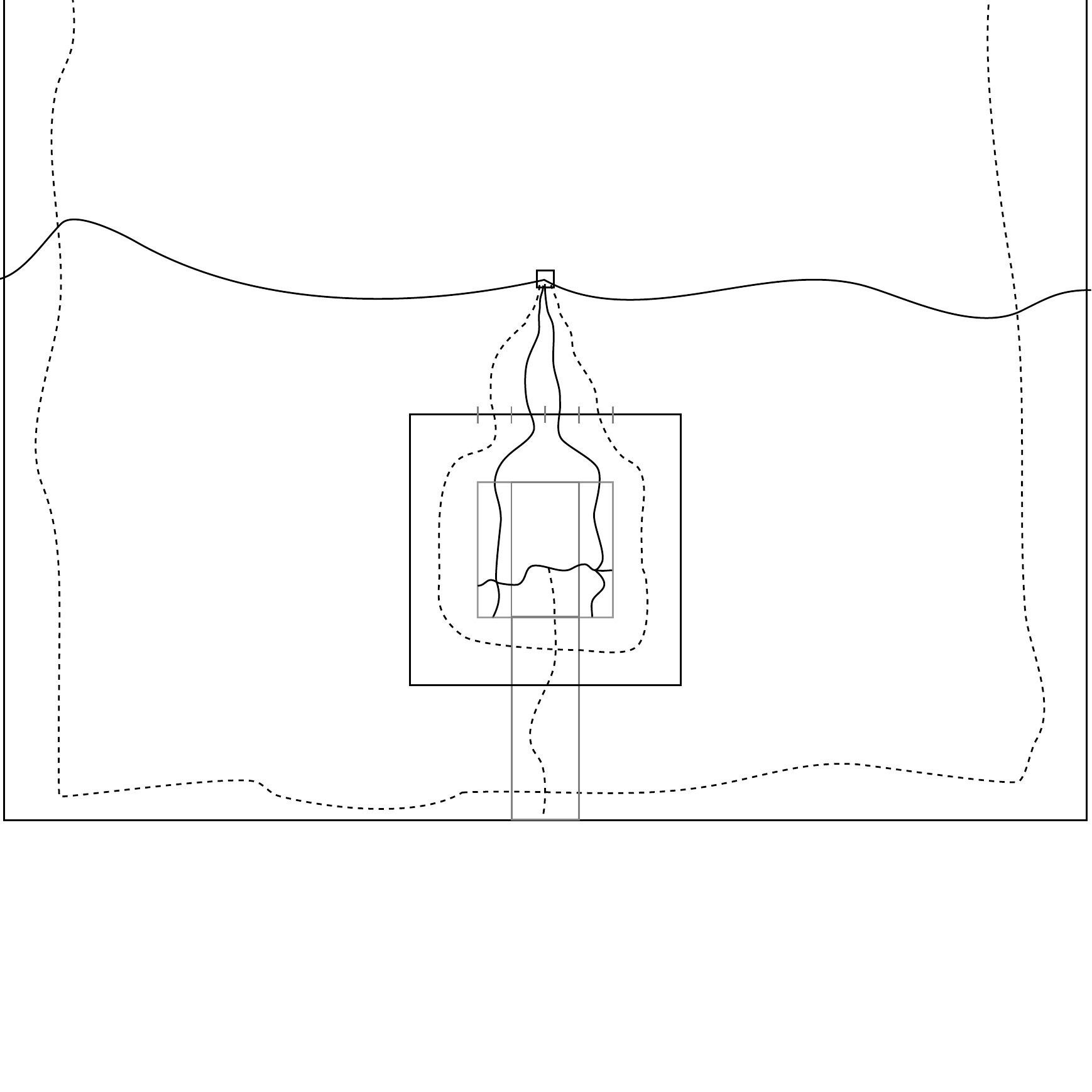}}
\caption{Depiction of connections 9-16 in the definition of $E_k$. The outermost box is $[-2^K,2^K]^2$. The dangling path is a portion of the ``detoured path'' originating in the smaller box $[-3\cdot 2^k, 3\cdot 2^k]^2$. The dotted paths are closed dual paths which ensure that when the center of the smallest box is on the lowest crossing of $B_n$, so is the dangling path.}
\label{rsw2}
\end{figure}

\subsection{Volume estimates}
On the event $E_k$, let $N_K$ be the the number of edges inside the box $[-\frac{1}{16}\cdot 2^K, \frac{1}{16}\cdot 2^K]\times [-\frac{9}{16}\cdot 2^K, -\frac{7}{16}\cdot 2^K]$ which lie on a horizontal open crossing of the box $[-\frac{1}{8} \cdot 2^K, \frac{1}{8} \cdot 2^K] \times [ - \frac{5}{8} \cdot 2^K, - \frac{3}{8} \cdot 2^K]$, and whose dual edge is connected by a closed dual path to the vertical crossing of $[-\frac{1}{16}\cdot 2^K, \frac{1}{16}\cdot 2^K]\times [-2^K, -\frac{5}{8}\cdot 2^K]$ in the definition of $E_k$. A second-moment calculation as in \cite[Proposition 16]{DHS15} gives the following. Recall that $K$ is of order $k + \log (1/\epsilon)$.
\begin{lma}\label{lma: volume}
There exists $c>0$ such that for all $\epsilon>0$ and $k$ sufficiently large (independent of $\epsilon$),
\[\mathbf{P}(N_K \ge c 2^{2K}\pi_3(2^K) \mid E_k)\ge c.\]
\end{lma}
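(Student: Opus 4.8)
The plan is to run a Paley--Zygmund (second moment) argument for $N_K$ under the conditional measure $\mathbf{P}(\cdot\mid E_k)$. Write $\rho = 2^{2K}\pi_3(2^K)$ and let $Q_K = [-\tfrac{1}{16}2^K,\tfrac{1}{16}2^K]\times[-\tfrac{9}{16}2^K,-\tfrac{7}{16}2^K]$ be the box in the definition of $N_K$. It is enough to prove
\[\mathbf{E}[N_K\mid E_k]\ge c_1\rho\qquad\text{and}\qquad \mathbf{E}[N_K^2\mid E_k]\le c_2\rho^2\]
with $c_1,c_2>0$ independent of $\epsilon$ and $k$; then Paley--Zygmund yields $\mathbf{P}(N_K\ge\tfrac{c_1}{2}\rho\mid E_k)\ge \tfrac{c_1^2}{4c_2}$, and the lemma follows with $c=\min\{c_1/2,\,c_1^2/(4c_2)\}$. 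Both moment estimates come from summing three-arm probabilities over the edges of $Q_K$, together with a gluing step removing the conditioning on $E_k$, following \cite[Proposition~16]{DHS15}.

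\emph{First moment.} For a horizontal edge $e\in Q_K$, if $e$ is counted then $e$ lies on a horizontal open crossing of $[-\tfrac18 2^K,\tfrac18 2^K]\times[-\tfrac58 2^K,-\tfrac38 2^K]$ and $e^*$ is joined by a closed dual path to the dual vertical crossing of $[-\tfrac{1}{16}2^K,\tfrac{1}{16}2^K]\times[-2^K,-\tfrac58 2^K]$. This forces two disjoint open arms and one disjoint closed dual arm from $e$ to distance $\asymp 2^K$, so $\mathbf{P}(e\text{ counted}\mid E_k)\le C\pi_3(2^K)$. For the reverse bound I would build $\{e\text{ counted}\}\cap E_k$ from the three-arm event at $e$ to distance $\asymp 2^K$, the RSW blocks defining $E_k$, and boundedly many extra RSW connections, combining the increasing and decreasing pieces with the generalized FKG inequality and invoking the usual arm-extension and separation lemmas; this gives $\mathbf{P}(e\text{ counted}\mid E_k)\ge c\pi_3(2^K)$ uniformly in $e\in Q_K$. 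Summing over the $\asymp 2^{2K}$ edges of $Q_K$ gives the first moment bound.

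\emph{Second moment.} Here $\mathbf{E}[N_K^2\mid E_k]=\sum_{e,e'\in Q_K}\mathbf{P}(e,e'\text{ counted}\mid E_k)$. The diagonal contributes $\le C\rho\le C\rho^2$, since $\rho\to\infty$ (by Proposition~\ref{prop: boss}, $\pi_3(2^K)\ge c\,2^{(\alpha_3-1)K}\gg 2^{-2K}$). For $e\ne e'$ with $|e-e'|_\infty\asymp 2^j$, $1\le j\le K$, both edges being counted forces three disjoint arms (two open, one closed dual) from $e$ and from $e'$ to distance $\asymp 2^j$, and, from scale $2^j$ out to scale $2^K$, three disjoint arms --- two open (the two ends of the crossing through $e$) and one closed dual (running down to the dual vertical crossing below the box). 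After decoupling from $E_k$ as above, independence of disjoint boxes gives $\mathbf{P}(e,e'\text{ counted}\mid E_k)\le C\,\pi_3(2^j)^2\,\pi_3(2^j,2^K)$. Since there are $\asymp 2^{2K}2^{2j}$ ordered pairs at scale $2^j$, quasimultiplicativity ($\pi_3(2^j)\pi_3(2^j,2^K)\asymp\pi_3(2^K)$) and Lemma~\ref{lem: artem} (in the form $\sum_{j\le K}2^{2j}\pi_3(2^j)\asymp 2^{2K}\pi_3(2^K)$) give
\[\sum_{1\le j\le K}2^{2K}2^{2j}\,\pi_3(2^j)^2\pi_3(2^j,2^K)\;\le\; C\,\rho\sum_{j\le K}2^{2j}\pi_3(2^j)\;\le\; C\rho^2,\]
so $\mathbf{E}[N_K^2\mid E_k]\le c_2\rho^2$.

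I expect the main obstacle to be the decoupling used in both estimates: $N_K$ is measurable with respect to edges near the bottom of $B(2^K)$, which is exactly the support of conditions~11, 12 and 14 in the definition of $E_k$, so the conditioning cannot simply be dropped. The way around it --- as in \cite[Proposition~16]{DHS15} --- is to use that the blocks defining $E_k$ are laid out with macroscopic slack, so that the arm events appearing above can be glued onto the remaining blocks of $E_k$, with the relevant crossings rerouted through $e$ (resp.\ through $e$ and $e'$), by repeated RSW, generalized FKG and arm-separation estimates, at a multiplicative cost of only a constant times $\mathbf{P}(E_k)$. This produces $\mathbf{E}[N_K\mathbf{1}_{E_k}]\ge c\rho\,\mathbf{P}(E_k)$ and $\mathbf{E}[N_K^2\mathbf{1}_{E_k}]\le C\rho^2\mathbf{P}(E_k)$, which are the conditional bounds needed; executing this surgery while controlling the separation of arms across all scales $2^k\le\cdot\le 2^K$ is the bulk of the work.
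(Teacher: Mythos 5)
Your proposal is correct and follows the same route the paper takes: the paper proves this lemma by citing the second-moment calculation of \cite[Proposition~16]{DHS15}, which is exactly the Paley--Zygmund argument you outline — first-moment lower bound $\asymp 2^{2K}\pi_3(2^K)$, second-moment upper bound $\asymp (2^{2K}\pi_3(2^K))^2$ by summing $\pi_3(2^j)^2\pi_3(2^j,2^K)$ over scales via quasimultiplicativity and Lemma~\ref{lem: artem}, and then gluing the relevant three-arm events into the blocks of $E_k$ using the macroscopic slack, RSW, and generalized FKG to control the conditioning. You also correctly identify the decoupling of the counting event from the overlapping blocks of $E_k$ (conditions 9--14) as the main technical point deferred to \cite{DHS15}.
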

Similarly, on $E_k$, let $\ell_k$ be the outermost open arc in 
\begin{align*}
&[-3\cdot 2^k,-\frac{7}{3}\cdot 2^k]\times[-\frac{1}{3}\cdot 2^k, 3\cdot 2^k]\\
\cup&\,[-3\cdot 2^k, 3\cdot 2^k]\times[\frac{7}{3}\cdot 2^k, 3\cdot 2^k]\\ 
\cup&\,[\frac{7}{3}\cdot 2^k, 3\cdot 2^k]\times [-\frac{1}{3}\cdot 2^k, 3\cdot 2^k]
\end{align*}
 joining the two five arm points in the boxes $[\frac{7}{3}\cdot 2^k, 3\cdot 2^k]\times [-\frac{1}{3}\cdot 2^k, \frac{1}{3}\cdot 2^k]$ and $[-3\cdot 2^k, -\frac{8}{3}\cdot 2^k ]\times [-\frac{1}{3}\cdot 2^k, \frac{1}{3}\cdot 2^k]$. Then we have the following lemma. The proof  is similar but simpler than the proof of \cite[Lemma 15]{DHS15}. For the remainder of the paper, we choose $c'$ (in the lemma below) large enough that $\eta < c/2$, where $c$ is from Lemma~\ref{lma: volume}.
\begin{lma}
For any $\eta>0$, there exists $c'$ such that for all $\epsilon>0$ and $k$ large (independent of $\epsilon$)
\[\mathbf{P}(\# \ell_k \le c' 2^{2k}\pi_3(2^k) \mid E_k)\ge 1-\eta.\]
\end{lma}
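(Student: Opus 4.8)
The plan is to reduce the statement to a first moment bound via Markov's inequality and then to a counting estimate of exactly the kind already carried out in Section~\ref{sec: reckoning}. Since
\[
\mathbf{P}(\#\ell_k > c' 2^{2k}\pi_3(2^k)\mid E_k) \le \frac{\mathbf{E}[\#\ell_k\mid E_k]}{c'\,2^{2k}\pi_3(2^k)},
\]
it suffices to prove $\mathbf{E}[\#\ell_k \mid E_k] \le C\, 2^{2k}\pi_3(2^k)$ for some $C$ independent of $\epsilon$ and of $k$ (large), and then to take $c' = C/\eta$. The reason this should hold with a bound that does not see the outer scale $2^K$ is that $\ell_k$ is contained in $[-3\cdot 2^k,3\cdot 2^k]^2$, so its entire length lives at the inner scale $2^k$, even though $E_k$ also constrains edges out to distance $2^K$.

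Write $\mathbf{E}[\#\ell_k\mid E_k] = \sum_{e\in D_k}\mathbf{P}(e\in\ell_k\mid E_k)$, where $D_k$ is the ``U-shaped'' region appearing in the statement. Let $P_1,P_2$ be the two small boxes holding the five-arm points of items 3 and 4 in the definition of $E_k$, and set $r(e) = \min(\mathrm{dist}(e,P_1),\mathrm{dist}(e,P_2))$. The geometric input — the analogue here of Lemmas~\ref{lma: middlect} and \ref{lma: circuits} — is that on $E_k\cap\{e\in\ell_k\}$ the edge $e$ carries the alternating three-arm event to distance $c\,r(e)$: the two open arms come from following $\ell_k$ in its two directions (toward $P_1$ and toward $P_2$), and the closed dual arm comes from the fact that $\ell_k$ is the \emph{outermost} open arc in $D_k$ joining $P_1$ to $P_2$, so that a closed dual path separates $\ell_k$ from the outer boundary of $D_k$; this path can be continued along the shield (item 7) to reach distance of order $r(e)$. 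Granting this, one removes the conditioning by the same gluing/quasimultiplicativity procedure used below Lemma~\ref{lem: pizza_head} and throughout Section~\ref{sec: reckoning}: intersecting with the three-arm pattern confined to $B_{r(e)}(e)$, conditioning on the edges outside that box, and checking that the remaining portion of $E_k$ together with the continuations of the three short arms can be produced at a cost comparable to $\mathbf{P}(E_k)$ gives $\mathbf{P}(e\in\ell_k\mid E_k) \le C\pi_3(r(e))$.

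It then remains to sum over $e$. Splitting $D_k$ according to whether $P_1$ or $P_2$ attains the minimum in $r(e)$, and using $\#\{e\in D_k:\mathrm{dist}(e,P_i)=r\}\le Cr$ together with $\mathrm{diam}(D_k)\le C2^k$, one gets
\[
\sum_{e\in D_k}\pi_3(r(e)) \le C\sum_{i=1}^{2}\sum_{r=1}^{C2^k} r\,\pi_3(r) \le C\,2^{2k}\pi_3(2^k),
\]
where the last step is Lemma~\ref{lem: artem}. Combined with the previous two displays this yields $\mathbf{E}[\#\ell_k\mid E_k]\le C2^{2k}\pi_3(2^k)$, hence the lemma. (The remark that the argument is ``simpler than \cite[Lemma 15]{DHS15}'' reflects that in that reference the object being measured shares its box with the bulk of the conditioning, whereas here $\ell_k$ occupies only the inner scale.)

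The step I expect to be the main obstacle is exactly the removal of the conditioning, i.e.\ the bound $\mathbf{P}(e\in\ell_k\mid E_k)\le C\pi_3(r(e))$: one must verify that the elaborate bundle of RSW crossings, five-arm points, shields and large-scale arms defining $E_k$ can be glued onto the three-arm pattern around $e$ at only a constant multiplicative cost. The delicate cases are the edges $e$ near one of the boxes $P_i$, where $B_{r(e)}(e)$ is small and already overlaps part of the definition of $E_k$; there one uses that the local pieces of $E_k$ inside $B_{r(e)}(e)$ are themselves implied by arm events to scale $r(e)$, so they are absorbed by quasimultiplicativity, while the pieces of $E_k$ outside $B_{r(e)}(e)$ are independent of the edges realizing the short arms. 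This is handled exactly as the corresponding decoupling in \cite[Section~6]{DHS15}.
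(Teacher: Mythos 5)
Your proposal is correct and is essentially the argument the paper intends: the paper gives no proof but cites it as ``similar but simpler'' than Lemma~15 of \cite{DHS15}, and the combination you describe --- Markov's inequality on a first-moment bound, the three-arm characterization of edges of $\ell_k$ (two open arms by following $\ell_k$ to the five-arm boxes, one closed dual arm by outermostness and the shield), the standard gluing/quasimultiplicativity to remove the conditioning, and the final sum via Lemma~\ref{lem: artem} --- is the standard route and produces a constant visibly independent of $\epsilon$ and $K$ because $\ell_k$ lives entirely at scale $2^k$. You correctly flag that the only nonroutine step is the uniformity of the gluing constant in the conditional bound $\mathbf{P}(e\in\ell_k\mid E_k)\le C\pi_3(r(e))$, which is exactly the content one must extract from \cite[Section~6]{DHS15}.
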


We now come to the main lemma of this section.
\begin{lma}\label{lma: ekprime}
For $\epsilon>0$, define 
\begin{equation}\label{eqn: ekpdef}
E_k'(e,\epsilon)=E_k(e,\epsilon)\cap \tau_{-e}\{N_K \ge c 2^{2K}\pi_3(2^K)\} \cap \tau_{-e}\{\# \ell_k \le c' 2^{2k}\pi_3(2^k)\}
\end{equation}
For all $\epsilon>0$ sufficiently small and $k \leq n^{\alpha_3/4}$, on the event $E_k'(e,\epsilon)\cap \{e\in \hat{l}_n\}$, there is an $\epsilon$-shielded detour around $e$, in the sense of Definition \ref{def: edetour}.
\begin{proof}
On the event $E_k$, by using the outermost arc $\ell_k$, we find a detour of length at most 
\[\#\ell_k \le c' 2^{2k}\pi_3(2^k)\]
around a portion of the lowest crossing of length at least
\[N_K \ge  c 2^{2K}\pi_3(2^K).\]
The ratio of the length of the detour to the length of the portion of the lowest crossing is thus bounded above by
\begin{align*}
C\frac{2^{2k}\pi_3(2^k)}{(1/\epsilon)^2 2^{2k}\pi_3(\epsilon^{-1}2^k)}&\le C \frac{c'}{c} \epsilon^2 \frac{1}{\pi_3(2^k,2^K)}\\
& <  \epsilon,
\end{align*}
provided $\epsilon$ is small enough. In the last step we have used $\pi_3(m,n)\ge C (m/n)^{1-\alpha_3}$ (Proposition \ref{prop: boss}).

Following the proof of \cite[Proposition 9]{DHS15}, one checks that the outermost arc $\ell_k$ is an $\epsilon$-shielded detour; that is, on $E_k'\cap \{e\in \hat{l}_n\}$, $\ell_k$ satisfies Definition \ref{def: edetour}.
\end{proof}
\end{lma}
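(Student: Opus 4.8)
The plan is to exhibit, on $E_k'(e,\epsilon)\cap\{e\in\hat{l}_n\}$, an explicit candidate for the $\epsilon$-shielded detour and then to verify the five items of Definition~\ref{def: edetour} in turn. For the detour path $P$ I would take (a translate by $e$ of) the outermost open arc $\ell_k$ from the definition of $E_k$ --- the one joining the two five-arm points in the boxes $[\tfrac{7}{3}2^k,3\cdot 2^k]\times[-\tfrac{1}{3}2^k,\tfrac{1}{3}2^k]$ and $[-3\cdot 2^k,-\tfrac{8}{3}2^k]\times[-\tfrac{1}{3}2^k,\tfrac{1}{3}2^k]$ through the ``detour'' arc of item~8 and the scale-$2^K$ excursion of items~9--14 --- completed by the short pieces of open crossings in items~1 and~16 needed to reach $l_n$. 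The shield $R$ is the closed dual arc of item~7, extended along the closed dual connections of items~5--6, 9--10, 14 and the outer dual closed circuit of item~15 as needed, and $Q$ is the subpath of $l_n$ cut off between the two endpoints $w_0,w_M$ of $P$.

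The first task is purely topological (items~1--4 of Definition~\ref{def: edetour}), and here I would follow the argument in the proof of \cite[Proposition~9]{DHS15}, with the obvious changes to account for the new geometry. The key observation is that when $e\in\hat{l}_n$, the translate of the center of the inner box $[-3\cdot 2^k,3\cdot 2^k]^2$ lies on $l_n$; then the closed dual arms of the two five-arm points, the shield arc of item~7, the closed dual crossings of items~5--6, the closed dual arc of item~9 with the connections of item~10, the dual closed crossing of item~14, and the outer dual closed circuit-with-two-open-defects of item~15 together form a closed dual barrier that pins both five-arm points onto $l_n$ and forces everything below the line $\{y=\tfrac{1}{3}2^k\}$ into $\mathcal{B}(l_n)$, while the open arc $P$ is forced into $[-n,n]^2\setminus\mathcal{B}(l_n)$. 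Because $2^K\asymp 2^k/\epsilon$ is much smaller than $n^{\alpha_3/2}$, the whole picture lives inside $\Lambda_n$ and does not touch $\partial B_n(0)$. This yields that $Q\cup P$ is a closed circuit in $B_n(0)$ with the prescribed local structure at $w_0,w_M$, with $P$'s interior vertices outside $\mathcal{B}(l_n)$, and with $R$ a valid dual shield.

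For item~5, $\#P\le\epsilon\#Q$, the two auxiliary events in $E_k'$ do the work. On $E_k'$ we have $\#P\le\#\ell_k\le c'2^{2k}\pi_3(2^k)$ while $\#Q\ge N_K\ge c\,2^{2K}\pi_3(2^K)$; since $2^K\asymp 2^k/\epsilon$, quasimultiplicativity of $\pi_3$ gives $\pi_3(2^K)\asymp\pi_3(2^k)\pi_3(2^k,2^K)$, so the ratio is at most $C(c'/c)\,\epsilon^2/\pi_3(2^k,2^K)$. Applying Proposition~\ref{prop: boss} in the form $\pi_3(m,n)\ge C(m/n)^{1-\alpha_3}$ with $m/n\asymp\epsilon$ bounds this by $C(c'/c)\,\epsilon^{1+\alpha_3}$, which is $<\epsilon$ as soon as $\epsilon$ is small; this also quantifies how small $\epsilon$ must be.

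I expect the main obstacle to be the topological verification of the second paragraph: one must re-derive, in the present geometry --- with the scale-$2^K$ dangling excursion in place of the construction used in \cite{DHS15} --- that having the center of the inner box on $l_n$ forces the five-arm points onto $l_n$ and keeps the exterior open arc strictly inside $[-n,n]^2\setminus\mathcal{B}(l_n)$. Making the two open defects of the outer dual circuit (item~15) and the two open arms of item~16 meet the open arms leaving the five-arm points, so that the detour is neither separated from $l_n$ nor pushed into $\mathcal{B}(l_n)$, is the delicate bookkeeping; once that is in place the length estimate is routine given Lemma~\ref{lma: volume} and the companion bound on $\#\ell_k$ built into $E_k'$.
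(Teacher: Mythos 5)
Your proposal takes essentially the same route as the paper: take $\ell_k$ as the detour, take the scale-$2^K$ dual connections for the shield, use $N_K$ as a lower bound on the length of the detoured portion of $l_n$, and bound the ratio by $C\epsilon^2 / \pi_3(2^k,2^K) \le C\epsilon^{1+\alpha_3} < \epsilon$ via quasimultiplicativity and Proposition~\ref{prop: boss}, with the topological verification of Definition~\ref{def: edetour} items~1--4 delegated to the argument of \cite[Proposition~9]{DHS15}. The only difference is that you spell out the topological bookkeeping in somewhat more detail than the paper, which simply cites \cite{DHS15}; the substance is identical.
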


\subsection{Conditional probability of $E_k'$}\label{sec: neweventprob}
Recall the definition of the event $E_k'$ in \eqref{eqn: ekpdef}. We provide a lower bound on the probability of $E_k'$. 
\begin{prop}\label{prop: probest} There is a constant $C>0$ such that, for any $n^{\alpha_3 /8} \le k\le n^{\alpha_3 /4}$,
\[\mathbf{P}(E_k'\mid A_3(n^{\alpha_3/2}))\ge C\epsilon^4.\]

\begin{proof}
From Lemma \ref{lma: volume}, the definition of $E_k$, independence of disjoint regions, standard constructions using Russo-Seymour-Welsh, and generalized FKG \cite[Lemma 13]{nolin}, it follows that
\[\mathbf{P}(E_k'\mid A_3(n^{\alpha_3/2})) \ge C \mathbf{P}(A_6(2^{k+2},2^{K-2})),\]
where $A_6(2^{k+2}, 2^{K-2})$ is the probability that there are four open arms, and two closed dual arms from $\partial B_{2^{k+2}}(0)$ to $\partial B_{2^{K-2}}(0)$. This is because the conditions in the definition of $E_k$ (Section \ref{sec: Ek}) that contribute a factor to the probability of $E_k'$ which is not constant relative to $2^k/2^K$ are the ``macroscopic connections'' between the box $[-3\cdot 2^k,3\cdot 2^k]^2$ and boxes of size of order $2^K$: the two closed dual connections in item 10., the two open paths in item 13., and the open arms in item 16. 

Next, we have
\begin{align*}
\mathbf{P}(A_6(2^{k+2},2^{K-2}))=:\pi_6(2^{k+2},2^{K-2}) &\ge C(\pi_{3, HP}(1/\epsilon))^{2}\\
&\ge C\epsilon^{4}.
\end{align*}
The symbol $\pi_{3,HP}(n)$ denotes the probability that  origin is connected to distance $n$ by two disjoint open paths, and a dual vertex adjacent to the origin is connected to distance $n$ by a closed dual path, with all these paths lying in the upper half-plane $\mathbb{R}\times [0,\infty)$. In the second inequality, we have used quasimultiplicativity \cite[Proposition 12.2]{nolin}, and the universal value of the half-plane exponent \cite[Theorem 24.2]{nolin}.

\end{proof}
\end{prop}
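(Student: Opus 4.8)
The plan is to reduce the bound to an estimate on a macroscopic six-arm probability across an annulus of modulus of order $1/\epsilon$, and then to evaluate that six-arm probability using the universal value of the half-plane three-arm exponent. Recall that $E_k'(e,\epsilon)$ adds to $E_k(e,\epsilon)$ the two ``volume'' conditions $\tau_{-e}\{N_K\ge c\,2^{2K}\pi_3(2^K)\}$ and $\tau_{-e}\{\#\ell_k\le c'\,2^{2k}\pi_3(2^k)\}$, that the sixteen conditions of Section~\ref{sec: Ek} defining $E_k$ are all determined by edges lying at distance of order $2^K$ from $e_x$, and that $2^K\asymp 2^k/\epsilon$. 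The guiding observation is that, apart from a few ``macroscopic'' connections, every piece of the construction lives in a box or annular region whose two relevant dimensions have a bounded ratio, so by Russo--Seymour--Welsh it has probability bounded below by a constant uniformly in $\epsilon$; the only conditions carrying a genuinely $\epsilon$-dependent cost are the two long closed dual paths of item~10, the two long open paths of item~13, and the two long open arms of item~16 --- six arms in all, four open and two closed, running from the inner box $[-3\cdot 2^k,3\cdot 2^k]^2$ out to the boxes of size of order $2^K$.

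First I would carry out the reduction, namely show
\[\mathbf{P}(E_k'\mid A_3(n^{\alpha_3/2}))\ \ge\ C\,\pi_6(2^{k+2},2^{K-2}),\]
where $\pi_6$ is the six-arm probability with four open and two closed arms across the indicated annulus, and the shrinking of radii from $2^k$ to $2^{k+2}$ and from $2^K$ to $2^{K-2}$ only leaves room at the two ends of the macroscopic arms for the bounded-aspect-ratio boxes. Conditioning on $A_3(n^{\alpha_3/2})$ merely supplies, in the relevant range of $k$, three alternating arms crossing the region in which $E_k$ is built. I would then construct $E_k'$ region by region: each of the bounded-aspect-ratio boxes appearing among items~1--15 (the RSW crossings, the two five-arm points of items~3--4, the vertical and horizontal crossings, the defected circuit of item~15) carries its required configuration with probability bounded below by a constant by RSW, and these regions are disjoint, so independence of disjoint regions applies; the generalized FKG inequality \cite[Lemma~13]{nolin} then glues the local pieces to one another, to the three arms coming from $A_3(n^{\alpha_3/2})$, and to the six macroscopic arms, exactly as in the gluing arguments of \cite{DHS15}. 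For the two volume conditions I would invoke Lemma~\ref{lma: volume}, giving $\mathbf{P}(N_K\ge c\,2^{2K}\pi_3(2^K)\mid E_k)\ge c$, together with the companion length estimate for $\#\ell_k$, which holds given $E_k$ with probability at least $1-\eta$ for a chosen $\eta<c/2$; hence both volume conditions hold simultaneously given $E_k$ with probability at least $c/2$. Collecting all the constant factors and cancelling $\mathbf{P}(A_3(n^{\alpha_3/2}))$ against the matching part of the gluing yields the displayed inequality.

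Next I would lower-bound $\pi_6(2^{k+2},2^{K-2})$. Since $2^{K-2}/2^{k+2}\asymp 1/\epsilon$, quasimultiplicativity \cite[Proposition~12.2]{nolin} reduces this to bounding the six-arm probability across a single annulus of modulus of order $1/\epsilon$. Forcing a closed dual crossing of that annulus along a diameter (constant cost by RSW) splits it into two half-annuli, and arranging three of the six arms (two open and one closed) to lie in each half-plane confines each triple to a half-plane three-arm event, so
\[\pi_6(2^{k+2},2^{K-2})\ \ge\ C\,\bigl(\pi_{3,HP}(1/\epsilon)\bigr)^2.\]
By the universal value $2$ of the half-plane three-arm exponent \cite[Theorem~24.2]{nolin}, $\pi_{3,HP}(1/\epsilon)\asymp\epsilon^2$, hence $\pi_6(2^{k+2},2^{K-2})\ge C\epsilon^4$, and combining with the reduction gives $\mathbf{P}(E_k'\mid A_3(n^{\alpha_3/2}))\ge C\epsilon^4$.

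The hard part is the reduction step, and in particular verifying that all the arms produced by the sixteen conditions (and by the two volume events) have mutually compatible and well-separated landing sequences, so that the RSW/generalized-FKG machinery genuinely patches them into a single realization of $E_k'$ at the claimed constant cost, and that the far-away conditioning on $A_3(n^{\alpha_3/2})$ does not interfere. This is precisely the lengthy but routine bookkeeping of \cite{DHS15}; I would organize it by first establishing a constant lower bound for each local piece in isolation, then using independence over the disjoint regions, and only at the end invoking \cite[Lemma~13]{nolin} to combine everything with the macroscopic six-arm skeleton.
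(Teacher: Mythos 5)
Your proposal is correct and follows essentially the same route as the paper: reduce $\mathbf{P}(E_k'\mid A_3(n^{\alpha_3/2}))$ to the six-arm probability $\pi_6(2^{k+2},2^{K-2})$ by noting that only the macroscopic connections in items 10, 13 and 16 carry an $\epsilon$-dependent cost (everything else being constant by RSW, independence of disjoint regions, and generalized FKG, with the two volume conditions handled by Lemma~\ref{lma: volume} and its companion), then bound $\pi_6$ below by $(\pi_{3,HP}(1/\epsilon))^2\asymp\epsilon^4$ using quasimultiplicativity and the universal half-plane three-arm exponent. One minor remark: the step of forcing a closed dual crossing of the annulus along a diameter is superfluous --- placing one half-plane three-arm event in the upper half-plane and one in the lower half-plane already produces six disjoint arms (four open, two closed) on disjoint edge sets, so independence gives $\pi_6\ge(\pi_{3,HP})^2$ directly without paying for the extra crossing.
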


\subsection{Probability of existence of a shortcut}\label{sec: neweventprob2}
We now prove Proposition \ref{prop: probest2}. To simplify notation, we define
\[N:= n^{\alpha_3/2},\]
where $\alpha_3$ is the constant in Proposition \ref{prop: boss}.
\begin{proof}[Proof of Proposition \ref{prop: probest2}]
Define 
\[m_N := \min\{l: (\log n)^l\ge n^{\alpha_3/8}\}\]
and
\[
M_N := \max\{l: (\log n)^l \le n^{\alpha_3/4}\}.
\]

By Lemma \ref{lma: ekprime}, it is sufficient to show that
\begin{equation}\label{eqn: bound}
\mathbf{P}(\cap_{l=m_N}^{M_N} (E_{l\lfloor \log \log n \rfloor }'(e,\epsilon))^c\mid e\in \hat{l}_n)= o\left( \frac{1}{\log n} \right)
\end{equation}
uniformly in $e \in \Lambda_n$. Note that for $\epsilon \ge \frac{1}{(\log n)^{c_2}}$ with $0<c_2<1/4$, the $E'_{l \lfloor \log \log n \rfloor}$'s are independent for different $l$, and $m_N \ge C\frac{\log n}{\log \log n}$. Thus we must deal with the effect of conditioning on $\{e\in \hat{l}_n\}$. A similar problem, with a different definition of $E_k$, was resolved in \cite{DHS15}, where it was shown \cite[Proposition 19]{DHS15} that the left-hand side of \eqref{eqn: bound} can be bounded up to a constant factor by
\[\mathbf{P}(\cap_{l=m_N}^{M_N} (E_{l \lfloor \log  \log n \rfloor }'(e,\epsilon))^c\mid A_3(e,N)),\]
where $A_3(e,N)$ is the probability that the edge $e$ is connected to $\partial B_N(e)$ by two disjoint open paths, and the dual edge $e^*$ is connected to $\partial B_N(e)^*$ by a closed dual path.
By translation invariance, it will thus be enough to show
\begin{equation}\label{eqn: whattoprove}
\mathbf{P}(\cap_{l=m_N}^{M_N} (E_{l \lfloor \log \log n \rfloor}'(\epsilon))^c\mid A_3(N)) = o\left( \frac{1}{\log n}\right).
\end{equation}

We will need the following modification of \cite[Claim 3]{DHS15}:
\begin{lma}\label{lma: circlemma}

For a sequence of integers
\[i_1<i_2< \ldots < i_k < \ldots \]
let $\tilde{B}_k$ be the event that there is a closed dual circuit with two defects (two open edges) around the origin in $Ann(i_k,i_{k+1})$ and
\[k_N=\max \{ k: i_{k+1}< n^{\alpha_3/4}\}.\]
 Furthermore, let $\hat{B}_k$ be the event that there exists an open circuit with one defect around the origin in $Ann(i_k,i_{k+1})$ and put
 \[
 C_N:= \hat{B}_1^c \cup \left( \cup_{k=2}^{k_N} \tilde{B}_k^c \right).
 \] 
 There is a choice of $i_1, i_2, \ldots$ such that $i_{k+1} \ge (\log n)^{1/4}\cdot i_k$
\[\mathbf{P}(C_N\mid A_3(N))= o\left(\frac{1}{\log n}\right).\]

\begin{proof}
We define $i_k$ by  
\begin{equation}
\label{eqn: eyek}
i_k = \left\lceil (\log n)^{c'' k}\right\rceil,
\end{equation}
where $c'' > 1/4$ is chosen so that 
\begin{align*}
&\mathbf{P}(\text{there is an open or closed dual crossing of } Ann(i_k,i_{k+1}))\\
\le &\,2\pi_1((\log n)^{c''})\\
\le &\, \frac{1}{(\log n)^2}.
\end{align*}
Recall from \eqref{eqn: pi1} that $\pi_1(m)$ is the probability that the origin is connected to distance $m$ by an open path.

With the choice \eqref{eqn: eyek}, we have
\[k_N \asymp \frac{\log n}{\log \log n}.\]
It follows that
\begin{equation*}
\mathbf{P}(A_3(N), \hat{B}_1^c\cup (\cup_{k=2}^{k_N}\tilde{B}_k^c))\le \mathbf{P}(A_3(N),\hat{B}_1^c)+\sum_{k=2}^{k_N}\mathbf{P}(A_3(N),\tilde{B}_k^c).
\end{equation*}
Using independence, the last sum is bounded above by
\[
\sum_{k=2}^{k_N} \mathbb{P}(A_3(i_k), A_3(i_{k+1},N)) \mathbb{P}(A_3(i_k,i_{k+1}),\tilde{B}_k^c).
\]
By Menger's theorem, if $A_3(i_k,i_{k+1}) \cap \tilde{B}_k^c$ occurs, then there must be three disjoint open paths and one dual closed path crossing $Ann(i_k,i_{k+1})$. Using Reimer's inequality \cite{reimer} and quasimultiplicativity of three-arm probabilities, we obtain the upper bound
\begin{align*}
C\mathbf{P}(A_3(N))\sum_{k=1}^{k_N}\pi_1(i_k,i_{k+1}) &\le Ck_N (\log n)^{-2} \mathbf{P}(A_3(N))\\
&= o(1/\log n) \cdot \mathbf{P}(A_3(N)).
\end{align*}
\end{proof}

\end{lma}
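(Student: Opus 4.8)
The plan is to choose the scales $i_k$ growing geometrically with ratio a fixed power of $\log n$, so that (i) $i_{k+1}/i_k \ge (\log n)^{1/4}$ as required, (ii) only $k_N \asymp \log n/\log\log n$ annuli $\mathrm{Ann}(i_k,i_{k+1})$ fit inside $B_{n^{\alpha_3/4}}$, and (iii) each such annulus is wide enough to carry a one-arm penalty of size $(\log n)^{-2}$. Concretely I would set $i_k = \lceil (\log n)^{c'' k} \rceil$ for a sufficiently large constant $c'' > 1/4$; then $i_{k+1}/i_k \asymp (\log n)^{c''}$ and, since the largest relevant index satisfies $i_{k_N+1} \asymp n^{\alpha_3/4}$, one gets $c'' k_N \log\log n \asymp (\alpha_3/4)\log n$, i.e.\ $k_N \asymp \log n/\log\log n$. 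Writing $N = n^{\alpha_3/2}$ and noting $\mathbf{P}(C_N \mid A_3(N)) = \mathbf{P}(C_N, A_3(N))/\pi_3(N)$, the union bound together with $C_N \subseteq \hat{B}_1^c \cup \bigcup_{k=2}^{k_N}\tilde{B}_k^c$ reduces everything to showing
\[ \mathbf{P}(A_3(N), \hat{B}_1^c) + \sum_{k=2}^{k_N} \mathbf{P}(A_3(N), \tilde{B}_k^c) \le C\, \pi_3(N)\, k_N\, (\log n)^{-2}. \]

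The heart of the matter is the per-annulus bound $\mathbf{P}(A_3(N), \tilde{B}_k^c) \le C\, \pi_3(N)\, \pi_1(i_k, i_{k+1})$ (and its analogue with $\hat{B}_1^c$). First localize $A_3(N)$ to the annulus: the three events ``$3$ arms from $e$ to $\partial B_{i_k}(e)$'', ``$3$ arms across $\mathrm{Ann}(i_k,i_{k+1})$'', and ``$3$ arms from $\partial B_{i_{k+1}}(e)$ to $\partial B_N(e)$'' depend on disjoint edge sets, so by independence and quasimultiplicativity of the three-arm probability \cite[Proposition~12.2]{nolin},
\[ \mathbf{P}(A_3(N), \tilde{B}_k^c) \le \pi_3(i_k)\cdot \mathbf{P}\bigl( \{A_3 \text{ across } \mathrm{Ann}(i_k,i_{k+1})\} \cap \tilde{B}_k^c \bigr) \cdot \pi_3(i_{k+1},N), \]
and it remains to bound the middle factor, then absorb $\pi_3(i_k)\,\pi_3(i_{k+1},N) \le C\,\pi_3(N)/\pi_3(i_k,i_{k+1})$. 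For the middle factor I would use planar duality with Menger's theorem: a closed dual circuit around $0$ in the annulus with at most two open defects is exactly a way of disconnecting $\partial B_{i_k}$ from $\partial B_{i_{k+1}}$ inside the annulus by deleting at most two open edges, so $\tilde{B}_k^c$ forces three edge-disjoint open crossings of $\mathrm{Ann}(i_k,i_{k+1})$; combined with the closed dual crossing provided by $A_3$, this is the disjoint occurrence of ``two open arms and one closed dual arm across the annulus'' and ``one further open arm across the annulus'', so Reimer's inequality \cite{reimer} gives the middle factor $\le \pi_3(i_k,i_{k+1})\,\pi_1(i_k,i_{k+1})$. The term $\mathbf{P}(A_3(N), \hat{B}_1^c)$ is handled identically: the absence of an open circuit around $0$ in $\mathrm{Ann}(i_1,i_2)$ with at most one closed defect forces, by the dual Menger argument, two edge-disjoint closed dual crossings of that annulus, which together with the two open arms from $A_3$ is a four-arm configuration bounded by $\pi_3(i_1,i_2)\,\pi_1(i_1,i_2)$ via Reimer, and the same absorption step applies.

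Putting this together, $\mathbf{P}(A_3(N), \tilde{B}_k^c) \le C\, \pi_3(N)\, \pi_1(i_k,i_{k+1})$, and since $i_{k+1}/i_k \ge (\log n)^{c''}$, monotonicity and quasimultiplicativity give $\pi_1(i_k,i_{k+1}) \le C\, \pi_1\bigl((\log n)^{c''}\bigr)$. Using the standard polynomial decay $\pi_1(m) \le C m^{-\zeta}$ for a universal $\zeta > 0$ (a consequence of RSW), choosing $c''$ large enough that $c''\zeta \ge 2$ yields $\pi_1\bigl((\log n)^{c''}\bigr) \le (\log n)^{-2}$ for all large $n$. Summing over the $\le k_N$ terms gives
\[ \mathbf{P}(A_3(N), C_N) \le C\, \pi_3(N)\, k_N\, (\log n)^{-2} = \pi_3(N)\cdot o\!\left(\frac{1}{\log n}\right), \]
because $k_N \asymp \log n/\log\log n$; dividing by $\pi_3(N)$ completes the proof. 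I expect the only delicate step to be the duality/Menger argument — phrasing ``no mostly-closed dual circuit around $0$ in the annulus'' precisely as ``three edge-disjoint open crossings of the annulus'' (and the analogue for $\hat{B}_1$), and then cleanly factoring the resulting multi-arm event through the annulus while carrying the conditioning on $A_3(N)$; the remaining ingredients (Reimer, quasimultiplicativity, and the elementary arithmetic bounding $k_N$) are routine.
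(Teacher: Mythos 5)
Your proof is correct and follows essentially the same route as the paper's: the same geometric choice of scales $i_k=\lceil(\log n)^{c''k}\rceil$ giving $k_N\asymp\log n/\log\log n$, the same union bound, and the same Menger/Reimer/quasimultiplicativity reduction to $\sum_k\pi_1(i_k,i_{k+1})$. You supply a few details the paper leaves implicit — in particular the duality-Menger reduction for the $\hat B_1^c$ term (two edge-disjoint closed dual crossings) and the explicit absorption of $\pi_3(i_k)\pi_3(i_{k+1},N)$ into $\pi_3(N)/\pi_3(i_k,i_{k+1})$ — but these are exactly the steps the paper summarizes as ``using Reimer's inequality and quasimultiplicativity.''
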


Let $0<\delta<1/4$ be equal to $\frac{1}{4}-c_2$, $F_1, F_2,\ldots $ be a maximal subcollection of the events $E'_{l\lfloor (\frac{1}{4}-\delta)\log \log n\rfloor}$, $l=1, \ldots, m_N$,
such that $F_1$ depends on the state of edges in $B(i_3)$, $F_2$ depends on the state of edges in $B(i_8)\setminus B(i_7)$, $F_3$ depends on $B(i_{13})\setminus B(i_{12})$, and so on. From \cite{DHS15}, we have the following
\begin{lma}[\cite{DHS15}, Proposition 22]
Let $r_n$ denote the minimal $k$ such that $F_k$ depends on $B(n^{\alpha_3/8})$ and $R_N$ denote the maximal $k$ such that $F_k$ depends on $B(n^{\alpha_3/4})$. Define $a_{n,N}$, by
\[a_{n,N}:= \min_{r_n \le k \le R_n} \mathbf{P}(F_k\mid A_3(N),C_N^c).\]
For some $c>0$, we have
\begin{equation}
\label{eqn: Fkest}
\mathbf{P}(\cap_{k=r_n}^{R_n}F_k^c \mid A_3(N),C_N^c) \le (1-ca_{n,N})^{R_n-r_n}.
\end{equation}
\end{lma}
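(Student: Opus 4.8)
The plan is to reveal the configuration in order of increasing scale and peel off the events $F_{r_n}, F_{r_n+1},\dots,F_{R_n}$ one at a time, establishing at each step a one-scale lower bound
\[
\mathbf{P}\bigl(F_k \mid \mathcal{G}_k,\, A_3(N),\, C_N^c\bigr)\ \ge\ c\, a_{n,N},
\]
with $c>0$ universal, where $\mathcal{A}_k$ is the annulus on which $F_k$ is measurable and $\mathcal{G}_k$ is the $\sigma$-field generated by the edges lying strictly inside $\mathcal{A}_k$. Multiplying these bounds over the $R_n-r_n$ relevant indices will give \eqref{eqn: Fkest}. The structure that makes this work is that the annuli $\mathcal{A}_{r_n},\dots,\mathcal{A}_{R_n}$ are pairwise separated and that between consecutive ones — and immediately inside and outside each $\mathcal{A}_k$ — lies an annulus $\mathrm{Ann}(i_j,i_{j+1})$ on which $C_N^c$ forces a closed dual circuit around $0$ carrying exactly two open defects (or, at the innermost scale, an open circuit with one defect).

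\emph{Decoupling.} Fix $k$ and condition on $\mathcal{G}_k$, $A_3(N)$ and $C_N^c$. Since $C_N^c$ places a closed dual circuit with precisely two open defects in the good annulus just inside $\mathcal{A}_k$, and that annulus is contained in $\mathcal{G}_k$, the two open arms of $A_3(N)$ must leave $B(i_{a(k)})$ through the two revealed defects while the closed dual arm runs along the revealed circuit; hence the portion of $A_3(N)$ inside $B(i_{a(k)})$ is entirely determined by $\mathcal{G}_k$. What is left of the conditioning for the unrevealed edges (those in $\mathcal{A}_k$ and outside it) is the existence of two disjoint open arms and one closed dual arm from $\partial B(i_{a(k)})$, rooted at the revealed landing points, out to scale $N$, together with the good-circuit events of $C_N^c$ at larger scales. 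Up to the choice of rooting, this is exactly the conditioning defining $a_{n,N}$, so it suffices to show that the extra information carried by $\mathcal{G}_k$ — essentially, where the three arms enter $\mathcal{A}_k$ — costs at most a universal constant.

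\emph{One-scale bound and iteration.} Under this conditional law, $F_k$ is produced by prescribing a bounded-complexity configuration of open and closed crossings and arcs inside $\mathcal{A}_k$ (the event $E_k'$ of \eqref{eqn: ekpdef}); this configuration can be joined to the forced arm endpoints and its outgoing arms extended past $\mathcal{A}_k$ to continue the three-arm event, using Russo--Seymour--Welsh gluing and the generalized FKG inequality \cite[Lemma~13]{nolin} exactly as in the constructions of Section~\ref{sec: Ek}, each such patch costing only a constant factor uniformly in $n$, $k$ and the landing points; reattaching the part of $A_3(N)$ above scale $i_{b(k)}$ via quasimultiplicativity \cite[Proposition~12.2]{nolin} then yields $\mathbf{P}(F_k \mid \mathcal{G}_k, A_3(N), C_N^c)\ge c\,\mathbf{P}(F_k\mid A_3(N),C_N^c)\ge c\, a_{n,N}$. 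Revealing $\mathcal{A}_{r_n},\mathcal{A}_{r_n+1},\dots$ in turn and using the tower property along $(\mathcal{G}_k)_{k\ge r_n}$,
\[
\mathbf{P}\Bigl(\bigcap_{k=r_n}^{R_n} F_k^c \,\Big|\, A_3(N), C_N^c\Bigr)
= \prod_{k=r_n}^{R_n}\mathbf{P}\Bigl(F_k^c \,\Big|\, \bigcap_{j<k}F_j^c,\, A_3(N), C_N^c\Bigr),
\]
and each factor is $\le 1-c\,a_{n,N}$ by the one-scale bound (the event $\bigcap_{j<k}F_j^c$ is $\mathcal{G}_k$-measurable, being supported at smaller scales), which is \eqref{eqn: Fkest}.

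\emph{Main obstacle.} The substantive point, and the place where care is needed, is the decoupling step: proving that conditioning on $A_3(N)\cap C_N^c$ together with the revealed inner configuration leaves the conditional probability of $F_k$ comparable to $a_{n,N}$. This genuinely uses the two-defect structure of the circuits in $C_N^c$ — it is what forces the three-arm event to ``factor through'' the nearest good circuit, so that the part of the conditioning lying outside $\mathcal{G}_k$ is again of three-arm type — and it must be followed by an explicit RSW/FKG patching of the internal arcs of $F_k$ onto the conditioned arms. Everything else is bookkeeping with the tower property, and the number of factors in the product is $R_n-r_n$.
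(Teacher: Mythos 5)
This lemma is quoted from \cite[Proposition~22]{DHS15} and is not proved in the present paper, so there is no internal proof to compare against. Your sketch --- reveal scales inward-to-out, use the two-defect dual circuits that $C_N^c$ forces in the buffer annuli to decouple the conditioning on $A_3(N)$ across scales, prove a uniform one-scale bound $\mathbf{P}(F_k\mid\mathcal{G}_k,A_3(N),C_N^c)\ge c\,a_{n,N}$, and telescope via the tower property --- is the standard strategy behind this kind of estimate and matches what \cite{DHS15} does; the bookkeeping is also sound (each $\bigcap_{j<k}F_j^c$ is $\mathcal{G}_k$-measurable, and the $R_n-r_n+1$ factors give a bound at least as strong as the stated exponent $R_n-r_n$).

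You correctly isolate the decoupling as the substantive step; when fleshing this out, two points deserve explicit treatment. First, the assertion that the revealed inner configuration ``costs at most a universal constant'' is a genuine arm-separation / quasimultiplicativity lemma, not a formality: it must hold uniformly over all realizations of $\mathcal{G}_k$ compatible with $A_3(N)\cap C_N^c$, and the two-defect structure is precisely what makes the arms pass through a controllable gate so that such uniformity is available. Second, $C_N^c$ forces circuit events not only in the buffer annuli between the $\mathcal{A}_k$'s but also in annuli that meet $\mathcal{A}_k$ itself (e.g.\ $\tilde B_7$ sits in $\mathrm{Ann}(i_7,i_8)$, which contains the support of $F_2$), so one needs the event $F_k=E_k'(\cdots)$ to be compatible with this extra conditioning rather than shrunk by it; this is exactly why item~15 of the definition of $E_k$ builds a dual circuit with two defects into the event itself.
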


Like for $k_N$,
\[R_n-r_n \ge C\frac{\log n}{\log \log n}.\]
By Proposition \ref{prop: probest} and Lemma \ref{lma: circlemma}, we have the following lower bound, independently of $k$
\begin{align*}
\mathbf{P}(E_{k}' \mid A_3(N), C_{N}^c)&\ge \mathbf{P}(E_{k}'\mid A_3(N))-\mathbf{P}(C_{N}\mid A_3(N))\\
&\ge C(\log n)^{-1+4\delta}-o(1/\log n).
\end{align*}
Thus, we obtain the lower bound
\[a_{n,N}\ge C(\log n)^{-1+4\delta},\]
and so
\begin{align*}
\mathbf{P}(\cap_{k=r_n}^{R_n}F_k^c \mid A_3(N),C_N^c) &\le \left(1-\frac{C}{(\log n)^{1-4\delta}}\right)^{C\frac{\log n}{\log \log n}}\\
&\le \exp\left( -C\frac{(\log n)^{4\delta}}{\log \log n}\right)\\
&=o(\exp(-(\log n)^{3\delta}).
\end{align*}
From this and Lemma \ref{lma: circlemma} we find
\begin{align*}
\mathbf{P}(\cap_{l=m_N}^{M_N} (E'_{(\frac{1}{4}-\delta)l\lfloor \log \log n\rfloor})^c\mid A_3(N))&\le o(\exp(-(\log n)^{3\delta})+ \mathbf{P}(C^c_{N}\mid A_3(N))\\
&= o(1/\log n).
\end{align*}
This is \eqref{eqn: whattoprove}, from which Proposition \ref{lma: circlemma} follows.
\end{proof}

\subsection{Proof of Corollary \ref{cor: quantitative}}\label{sec: lowertail}
\begin{proof}
Write, for $c_3,M>0$, 
\begin{align}
\mathbf{P}\left( S_n/L_n \ge \frac{1}{(\log n)^{c_3}} \mid H_n \right) &\le \mathbf{P}\left( S_n \ge \frac{n^2\pi_3(n)}{M(\log n)^{c_3}}\mid H_n \right) \label{eqn: S}\\
& \quad + \mathbf{P}\left( L_n \le \frac{n^2\pi_3(n)}{M}\mid H_n \right)\label{eqn: L}
\end{align}
By Theorem \ref{thm: quantitative} and Markov's inequality, the term on the right in \eqref{eqn: S} tends to zero for any $c_3<1/4$. By \cite[Lemma 4]{DHS15}, we have
\[\lim_{\epsilon \downarrow 0}\limsup_n \mathbf{P}(0<L_n < \epsilon n^2\pi_3(n))=0,\]
so \eqref{eqn: L} can be made arbitrarily small by choosing $M$ large.
\end{proof}



\end{document}